\def\cA{\mathcal A}
\newcommand{\norm}[1]{\| #1\|}
\def\a{          \alpha}
\def\cA{          \mathcal A}
\def\bbA{          \mathbb A}
\def\cB{          \mathcal B}
\def\cW{          \mathcal W}
\def\tcW{          \widetilde{\mathcal W}}
\let\cal\mathcal
\def \R{{\mathbb R}}
\def \Z{{\mathbb Z}}
\def \N{{\mathbb N}}
\def \a{{\alpha}}
\def \ta{{\tilde\alpha}}
\def \tb{{\tilde\beta}}
\newcommand{\prf}{{\begin{proof}}}
\newcommand{\epf}{{\end{proof}}}
\newcommand{\bg}{{\mathbf g}}
\newcommand{\cE}{{\mathcal E}}
\newcommand{\tcE}{{\widetilde{\mathcal E}}}
\newcommand{\C}{{\mathbb C}}
\newcommand{\liealgg}{{\mathfrak g}}
\newcommand{\liealga}{{\mathfrak a}}
\newcommand{\liealgm}{{\mathfrak m}}
\newcommand{\ary}{\begin{eqnarray}}
\newcommand{\eary}{\end{eqnarray}}
\newcommand{\aryst}{\begin{eqnarray*}}
\newcommand{\earyst}{\end{eqnarray*}}
\newcommand{\enmt}{\begin{enumerate}}
\newcommand{\eenmt}{\end{enumerate}}
\DeclareMathOperator{\diff}{Diff}
\newtheorem{theo}{\sc Theorem}
\newtheorem{prop}{\sc Proposition}[section]
\newtheorem{lemma}{\sc lemma}[section]
\newtheorem{claim}{\sc claim}
\newtheorem{cor}{\sc corollary}
\newtheorem{conj}[theo]{\sc Conjecture}
\theoremstyle{definition}
\def\bee{\begin{equation}}
\def\eee{\end{equation}}
\newtheorem{defi}{\sc Definition}[section]
\theoremstyle{rema}
\newtheorem{rema}{\sc Remark}
\newcommand{\pdvr}[2]
{\dfrac{\partial^{#2} #1}{\partial \theta^{#2_1} \partial r^{#2_2}}}
\newcommand{\pdvrs}[2]
{\partial^{#2} #1 /\partial \theta^{#2_1} \partial r^{#2_2}}
\newtheorem{thm}{\sc Theorem}
\numberwithin{equation}{section}
\author{Zhiyuan Zhang}
\address{Institute for Advanced Study}
\email{zzzhangzhiyuan@gmail.com}
\begin{document}

\title[Zimmer's conjecture for lattice actions: the ${\rm SL}(n, \C)$-case]{Zimmer's conjecture for lattice actions: the ${\rm SL}(n, \C)$-case}

\date{\today}

\maketitle

\begin{abstract}
We prove Zimmer's conjecture for co-compact lattices in ${\rm SL}(n, \C)$: for any co-compact lattice in ${\rm SL}(n, \C)$, $n \geq 3$,  any $\Gamma$-action on a compact manifold $M$ with dimension: (I) less than $2n-2$ if $n \neq 4$, (II) less than $5$ if $n = 4$,  by $C^{1+\epsilon}$ diffeomorphisms factors through a finite action.
\end{abstract}

\section{Introduction}


Motivated by a sequence of results  on the rigidity of linear representations including \cite{Sel, Weil, Mostow, Pra},  Margulis' superrigidity theorem 
 \cite{Mar}, and the extension to cocycles, Zimmer's cocycle superrigidity theorem \cite{Zim}, R. Zimmer proposed the following conjecture.
\begin{conj} \label{conj finite}
Let $G$ be a connected, semisimple Lie group with finite center, all of whose almost-simple factors have reak-rank at least $2$. Let $\Gamma < G$ be a lattice. Let $M$ be a compact manifold. If $\dim M < \min(n(G), d(G), v(G))$ then any homomorphism $\a : \Gamma \to \diff(M)$ has finite image.
\end{conj}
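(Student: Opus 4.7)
The plan is to follow the Brown--Fisher--Hurtado strategy: combine Zimmer's cocycle superrigidity with Lafforgue's strong property (T) for $G$ to produce a $\Gamma$-invariant Riemannian metric on $M$, and then use the dimension constraints $\dim M<\min(n(G),d(G),v(G))$ to conclude finiteness of the image of the resulting isometric action.

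First I would reduce to the case where $G$ is almost-simple of real rank $\geq 2$. Writing $G=G_1\times\cdots\times G_k$, after passing to a finite-index subgroup $\Gamma_0<\Gamma$ the projections of $\Gamma_0$ into the factors are again lattices, and finiteness of the image of the induced action of each factor implies finiteness for $\Gamma$; it therefore suffices to handle each $G_i$ separately. Next I would analyze the derivative cocycle $D\a\colon\Gamma\times M\to \mathrm{GL}(d,\RR)$, $d:=\dim M$. A $\Gamma$-invariant Borel probability measure $\mu$ exists by Krylov--Bogolyubov averaging. Zimmer's cocycle superrigidity theorem, applied to $(D\a,\mu)$, produces (modulo a measurable coboundary) a continuous homomorphism $\pi\colon G\to\mathrm{GL}(d,\RR)$ twisted by a cocycle into a compact subgroup; since $d<n(G)$, the representation $\pi$ must be trivial, so $D\a$ is measurably cohomologous to a compact-group-valued cocycle. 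In particular every fiberwise Lyapunov exponent vanishes against every invariant measure, and a uniform Oseledec/Kingman subadditive argument available in the higher-rank setting upgrades this to the pointwise subexponential growth
\[
\sup_{x\in M}\log\|D\a(\ga)(x)\|=o(|\ga|_\Gamma)\qquad\text{as }|\ga|_\Gamma\to\infty,
\]
where $|\cdot|_\Gamma$ is the word length with respect to a fixed generating set.

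I would then feed this subexponential growth into Lafforgue's strong property (T) for $G$, which is inherited by $\Gamma$: any affine isometric action of $\Gamma$ on a Hilbert space whose linear part has subexponentially growing matrix coefficients has a global fixed point. Applied to the natural $\Gamma$-action on a suitable Sobolev completion of sections of $\mathrm{Sym}^2 T^*M$ starting from a reference metric $g_0$, this yields a $\Gamma$-invariant symmetric $2$-tensor $g$ on $M$; positive-definiteness is obtained by convexity together with a further averaging against $g_0$. Hence $\a(\Gamma)$ embeds into $\mathrm{Isom}(M,g)$, a compact Lie group.

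Writing $H:=\overline{\a(\Gamma)}$, the identity component $H^0$ acts on $M$ with orbits of dimension $\leq d<d(G)$ and preserves the volume form of $g$ (so $M$ is a volume-preserving $H^0$-space of dimension $\leq d<v(G)$); by definition of $d(G)$ and $v(G)$ every continuous homomorphism $G\to H^0$ is therefore trivial, and Margulis superrigidity then forces $\a(\Gamma)\to H/H^0$ to have finite image. The main obstacle is reconciling the Hilbertian strong (T) step with the $C^{1+\epsilon}$ regularity hypothesis: controlling both the positivity and the H\"older regularity of the fixed metric on the chosen Sobolev completion requires a delicate bootstrap, and it is precisely at this step that generalizations beyond the cases already handled in the literature (such as $\mathrm{SL}(n,\RR)$ and the $\mathrm{SL}(n,\C)$ case treated in this paper) become nontrivial; the uniform subexponential growth of $D\a$, which in higher-rank groups rests on a delicate interplay between Zimmer superrigidity on stable manifolds and Ledrappier--Young entropy formulae, is the critical quantitative input that must be established in full generality for each new family of groups $G$.
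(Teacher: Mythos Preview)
The statement you are attempting to prove is stated in the paper as a \emph{conjecture}; the paper does not claim to prove it in general, only the special case $G=\mathrm{SL}(n,\C)$ with $\Gamma$ cocompact (Theorems~\ref{thm: main2} and~\ref{thm: main3}). So there is no ``paper's own proof'' of Conjecture~\ref{conj finite} to compare against, and your proposal is really an outline of the Brown--Fisher--Hurtado strategy together with a hope that it closes in general.

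Your outline contains a decisive gap at the start of the analytic part. You write that ``a $\Gamma$-invariant Borel probability measure $\mu$ exists by Krylov--Bogolyubov averaging.'' This is false: Krylov--Bogolyubov requires amenability, and a higher-rank lattice $\Gamma$ is never amenable; in general there is no $\Gamma$-invariant probability measure on $M$ (the projective action on $\mathbb{P}(\R^n)$ already has none). The absence of such a measure is exactly the obstacle the BFH machinery is built to overcome. In the paper (following \cite{BFH1}) one does \emph{not} begin with an invariant measure on $M$. One passes to the suspension $M^\a=(G\times M)/\Gamma$, where the abelian group $A$ acts and amenability does yield $A$-invariant measures. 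Assuming subexponential growth fails, one finds an $H$-invariant measure $\mu$ on $M^\a$ with a positive fiberwise Lyapunov exponent and $\pi_*\mu$ equal to Haar on $G/\Gamma$ (Proposition~\ref{prop. prop3.7inbfn1}). The substantive work of the paper---Propositions~\ref{prop: main} and~\ref{main tech prop}, carried out in Sections~\ref{sec. nonatomic} and~\ref{sec.atomic} via the $\pi$-partition/time-change argument of \cite{KKRH}, normal forms \cite{KK}, and the noncommuting-foliations mechanism of \cite{EK,EK2}---is to promote this $H$-invariance to $G$-invariance under the bound $\dim M<2n-2$. Only \emph{then} does one obtain a $\Gamma$-invariant measure on $M$, to which Zimmer's cocycle superrigidity applies, contradicting the positive exponent. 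Your proposal assumes the conclusion of this whole mechanism as its first step, and hence never engages with the actual difficulty. (A secondary error: your reduction to simple factors via ``projections of $\Gamma_0$ into the factors are again lattices'' fails for irreducible lattices, whose projections to proper factors are dense, not discrete.)
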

In the above conjecture, number $n(G)$ denotes the minimal dimension of a non-trivial real representation of the Lie algebra $\liealgg$ of $G$; number $v(G)$ denotes the minimal codimension of a maximal (proper) parabolic subgroup of $Q$ of $G$; and number $d(G)$ denotes the minimal dimension of all non-trivial homogeneous space $K/ C$ as $K$ varies over all compact real-forms  of all simple factors of the complexification of $G$.
There are also Zimmer's conjectures for volume-preserving actions. We refer the readers to \cite[Conjecture 1.2]{BFH1} for the statement of the full Zimmer's conjecture as extended by Farb and Shalen. We refer the readers to \cite{Fis1, Fis2} for the history of Zimmer's program as well as recent developments. 

In a recent breakthrough \cite{BFH1}, Brown, Fisher and Hurtado have  proved the non-volume preserving case of Zimmer's conjecture for co-compact lattices in higher-rank split simple Lie groups as well as certain volume preserving cases (under $C^2$ regularity assumption).
In \cite{BFH2}, the authors proved Zimmer's conjecture for the non-uniform lattice $SL(n, \Z)$. In \cite{DZ}, the authors replaced the regularity assumption $C^2$ in \cite{BFH1} by $C^1$ under a stronger dimensional constrain. We also mention \cite{Ye} for $SL(n, \Z)$ actions by homeomorphisms under a topological condition on the manifold.

 For many non-split Lie groups, the results in \cite{BFH1} also give  dimensional bounds that are comparable to the optimal bounds. For instance, for $n \geq 5$, the dimensional bound in \cite{BFH1} for $SL(n, \C)$, $SL(n, \mathbb H)$ are respectively one half and one quarter of the optimal bounds.
 In this paper, we improve the bound for $SL(n, \C)$ to the optimal level for co-compact lattices.
 The following are the main results of this paper.
\begin{thm}\label{thm: main2}
Let $n \geq 3$ be an integer, and let $\Gamma < SL(n, \C) $ be a co-compact lattice. Let $M$ be a connected, compact manifold satisfying: (I) $\dim M < 2n-2$ if $n \neq 4$, (II) $\dim M < 5$ if $n = 4$. Then any group homomorphism $\alpha: \Gamma \to \diff^{1+\epsilon}(M)$ factors through a finite group.
\end{thm}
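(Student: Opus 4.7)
The plan is to carry out the proof within the general framework of Brown--Fisher--Hurtado \cite{BFH1} for the non-volume-preserving case of Zimmer's conjecture, with the $C^{1+\epsilon}$ regularity handled via the Damjanovic--Zhang refinement \cite{DZ}. The new content specific to $SL(n, \C)$ lies in a sharper analysis of the admissible fiberwise Lyapunov spectra that exploits the complex structure of $\liealgg = \mathfrak{sl}(n,\C)$ to attain the sharp bound $\min(n(G), d(G), v(G)) = 2n-2$ (respectively $5$ when $n = 4$), improving on the roughly half-optimal bound that comes out of \cite{BFH1} by treating $SL(n,\C)$ through its real split rank alone.

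First I would form the suspension space $X = (G \times M)/\Gamma$ with $G = SL(n,\C)$, carrying the $G$-action by left translation and projecting to $G/\Gamma$. Since $M$ is compact there exists a $G$-invariant Borel probability measure $\mu$ on $X$. Fix a maximal $\R$-split Cartan subalgebra $\liealga \subset \liealgg$ of real dimension $n-1$, and apply the Oseledets multiplicative ergodic theorem to the fiberwise derivative cocycle $D^{fib}\alpha : A \times X \to GL(\dim M, \R)$, producing Lyapunov exponent functionals $\lambda_1, \dots, \lambda_r \in \liealga^*$. Zimmer's cocycle superrigidity theorem then identifies each $\lambda_i$ with a restricted weight of some finite-dimensional real linear representation $\rho$ of $G$.

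The crux of the argument is that these Lyapunov data must additionally be compatible with a $G$-homogeneous structure on the fibers of $X \to G/\Gamma$, and the complex structure of $\mathfrak{sl}(n,\C)$ enforces a weight-doubling phenomenon: every non-trivial real representation of $SL(n,\C)$ (viewed as a real Lie group) complexifies to a representation of $\mathfrak{sl}(n,\C) \oplus \mathfrak{sl}(n,\C)$ carrying a real structure interchanging the two factors, so every non-zero restricted weight of $\rho$ occurs with multiplicity divisible by $2$. Coupled with the fact that the minimal non-trivial $SU(n)$-homogeneous space has real dimension $2n-2$ (namely $\mathbb{CP}^{n-1} = SU(n)/U(n-1)$, matching also $v(G)$ via the flag variety $G/P$), this forces $\rho$ to be trivial whenever $\dim M < 2n-2$. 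For $n = 4$ the accidental isomorphism $SU(4) \cong \mathrm{Spin}(6)$ produces a strictly smaller homogeneous space $S^5 = \mathrm{Spin}(6)/\mathrm{Spin}(5)$, which is the source of the weakened bound $\dim M < 5$ in case (II) and which must be handled via the vector representation of $\mathrm{Spin}(6)$ realized through $\Lambda^2(\C^4)$ with its natural real structure. In every case, the conclusion is that all fiberwise Lyapunov exponents of $D^{fib}\alpha$ vanish for every $G$-invariant measure $\mu$ on $X$.

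With all Lyapunov spectra trivial, I would then invoke the $C^{1+\epsilon}$ bootstrap of \cite{DZ}: combining measurewise vanishing of exponents with strong property $(T)$ of $\Gamma$ and the averaging procedure of \cite{DZ} yields a $\Gamma$-invariant continuous Riemannian metric on $M$, so $\alpha(\Gamma)$ is uniformly equicontinuous and relatively compact in $\Diff^{1+\epsilon}(M)$; Margulis' normal subgroup theorem combined with Kazhdan's property $(T)$ then forces $\alpha$ to factor through a finite group. The hardest step is expected to be the sharper dimension bound, particularly in the exceptional case $n = 4$: one must rule out that the fiberwise Lyapunov data comes from exotic real forms or tensor-product constructions that could evade the weight-doubling and homogeneous-space counts, and do this precisely enough to attain the optimal bound $5$. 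A secondary obstacle is executing the subexponential-to-uniform upgrade in only $C^{1+\epsilon}$ regularity via \cite{DZ}, rather than in the $C^2$ category used in \cite{BFH1}.
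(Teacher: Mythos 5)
Your proposal has a fatal gap at its very first step, and that gap is precisely where the entire content of the paper lives. You write that ``since $M$ is compact there exists a $G$-invariant Borel probability measure $\mu$ on $X$.'' This is false: $G = SL(n,\C)$ is not amenable, so compactness of the fibers gives you no $G$-invariant measure on the suspension $M^\a$. What compactness (together with amenability of $P$ or of $A$) gives you is an $H$-invariant (indeed $P$-invariant) measure projecting to Haar on $G/\Gamma$, and the passage from $H$-invariance to $G$-invariance is the main theorem of the paper (Proposition \ref{prop: main}). Everything downstream in your outline --- applying Zimmer cocycle superrigidity to the fiberwise cocycle, the weight-doubling count for real representations of $\mathfrak{sl}(n,\C)$ --- presupposes this invariant measure, so your argument is circular. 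Concretely, the paper argues by contradiction: if $\a$ fails to have uniform subexponential growth of derivatives, \cite{BFH1} produces an $H$-invariant $H$-ergodic $\mu$ with a \emph{positive} fiberwise exponent projecting to Haar; one must then show $\mu$ is $G$-invariant to reach a contradiction via cocycle superrigidity. The reason \cite{BFH1} only achieves roughly half the optimal bound for $SL(n,\C)$ is exactly that their nonresonance argument (via \cite{BRHW2}) runs out of roots at $\dim M \approx n-1$; closing the gap up to $2n-2$ requires the new measure-rigidity input of Sections 4--5: the root combinatorics of Lemma \ref{prop. para} and Proposition \ref{prop. liegroup2} showing the stabilizer $Q$ of $\mu$ is parabolic and locating a root $\chi \in \Sigma^{out}_3 \cap (-\Sigma^{non}_Q)$; the proof (Proposition \ref{main tech prop}) that the conditional measures $\mu^{G^\chi}_x$ are non-atomic, using time changes and the $\pi$-partition trick of Kalinin--Katok--Rodriguez Hertz in the non-atomic case and normal forms plus a Ledrappier--Young entropy comparison in the atomic case; and finally the Einsiedler--Katok noncommuting-foliations method to upgrade non-atomicity to Haar along a new root group, contradicting the definition of $Q$. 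None of this appears in your proposal, and no representation-theoretic count of weights can substitute for it.

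Two smaller points. First, your placement of the $n=4$ exception is off: the isomorphism $SU(4)\cong \mathrm{Spin}(6)$ and the homogeneous space $S^5$ enter only in the final step, where the action is shown to factor through a compact group $K$ (via \cite[Theorem 2.9]{BFH1} and \cite[Proposition 7]{DZ}) and one needs $\dim M < \min(d(G), v(G))$ to kill the $K$-action by Margulis arithmeticity; they play no role in the Lyapunov spectrum analysis, and indeed the subexponential growth statement (Proposition \ref{thm small derivates}) holds for all $n\ge 3$ under $\dim M < 2n-2$ with no exception at $n=4$. Second, the endgame should go through the compact group $K$ and arithmeticity as in \cite[Section 7]{BFH1}, not directly through the normal subgroup theorem applied to an equicontinuous action; the latter alone does not rule out infinite image in a compact Lie group without the dimension bounds $d(G)$ and $v(G)$.
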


\begin{thm}\label{thm: main3}
	Let $n \geq 3$ be an integer, and let $\Gamma < SL(n, \C) $ be a co-compact lattice. Let $M$ be a connected, compact manifold satisfying: 1. $\dim M < 2n-2$. Then any group homomorphism $\alpha: \Gamma \to \diff^{2}(M)$ preserves a Riemannian metric.
\end{thm}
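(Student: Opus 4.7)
The plan is to combine the Brown--Fisher--Hurtado program for producing invariant Riemannian metrics (see \cite[Theorem~1.3]{BFH1}) with the Lyapunov-spectrum machinery underlying Theorem~\ref{thm: main2}. Since the conclusion is automatic if $\alpha(\Gamma)$ is finite, I may assume $\alpha$ has infinite image and proceed in three steps.

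\textbf{Step 1 (Vanishing Lyapunov exponents).} For every ergodic $\alpha(\Gamma)$-invariant Borel probability measure $\mu$ on $M$, I would show that all Lyapunov exponents of the derivative cocycle $D\alpha : \Gamma \times M \to GL(\dim M, \R)$ vanish $\mu$-a.e. Via suspension to a $G$-action on $G \times_\Gamma M$ and Zimmer's cocycle superrigidity, the algebraic hull of $D\alpha$ is (after a measurable coboundary) a product of a compact group with the image of a continuous real representation of $G = SL(n, \C)$ in $GL(\dim M, \R)$. The smallest nontrivial real representation of $SL(n, \C)$ viewed as a real Lie group has real dimension $2n$, namely the standard complex representation realified; hence $\dim M < 2n-2 < 2n$ forces this representation to be trivial. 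Consequently, $D\alpha$ is measurably cohomologous to a compact-valued cocycle, and all its Lyapunov exponents vanish.

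\textbf{Step 2 (Uniform subexponential growth).} Using the $C^2$ hypothesis, I would upgrade this measure-theoretic vanishing to uniform subexponential growth of the derivative cocycle: for every $\epsilon > 0$ there exists $C_\epsilon > 0$ with
\[
\sup_{x \in M} \max\bigl(\|D_x \alpha(\gamma)\|,\ \|D_x \alpha(\gamma^{-1})\|\bigr) \leq C_\epsilon e^{\epsilon |\gamma|}
\]
for all $\gamma \in \Gamma$, where $|\gamma|$ denotes the word length in a fixed symmetric generating set. The $C^2$ regularity is essential here because it grants enough control for upper semicontinuity of top Lyapunov exponents as a functional on the weak-$*$ space of invariant measures; combined with subadditivity and an ergodic argument as in \cite{BFH1}, this yields the uniform bound.

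\textbf{Step 3 (Averaging).} With uniform subexponential growth in hand, I would apply Hurtado's averaging procedure: given a smooth background Riemannian metric $g_0$, one takes a suitable average of the pullbacks $\alpha(\gamma)^* g_0$ weighted by probability measures on $\Gamma$ (e.g.\ Poisson--Furstenberg boundary measures, or Banach-limit type averages over Følner-like sets). Property $(T)$ for $\Gamma$, inherited from $SL(n, \C)$ for $n \geq 3$, together with subexponential growth forces a fixed point in a suitable Sobolev completion of the cone of continuous Riemannian metrics, producing a $\Gamma$-invariant continuous Riemannian metric on $M$.

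The critical step is Step~1. Although Zimmer cocycle superrigidity is classical, its application to the purely measurable derivative cocycle requires careful identification of the algebraic hull, and the representation-theoretic dimension count must be performed for $SL(n, \C)$ viewed as a \emph{real} Lie group rather than a split real form. The fact that the bound is uniform in $n$, without the $n = 4$ exception appearing in Theorem~\ref{thm: main2}, reflects that only the minimal-real-representation-dimension estimate is needed to rule out nonzero Lyapunov exponents, and this estimate is insensitive to the low-rank dimensional coincidences that complicate the proof of Theorem~\ref{thm: main2}.
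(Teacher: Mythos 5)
Your Step 3 and the overall architecture (uniform subexponential growth of derivatives followed by Hurtado's averaging) do match the paper, which deduces Theorem \ref{thm: main3} from Proposition \ref{thm small derivates} by invoking the averaging argument of \cite{BFH1}. The genuine gap is in Steps 1--2. In Step 1 you establish vanishing of Lyapunov exponents only for ergodic $\alpha(\Gamma)$-\emph{invariant} measures on $M$; but the action is not assumed volume-preserving, such measures need not exist a priori, and even when they do they are not the measures that control $\sup_x\|D_x\alpha(\gamma)\|$. The contrapositive of uniform subexponential growth (via \cite[Proposition 3.7]{BFH1}, quoted here as Proposition \ref{prop. prop3.7inbfn1}) produces an $H$-invariant, $H$-ergodic measure $\mu$ on the suspension $M^\alpha$ with positive top fiberwise exponent and with $\pi_*\mu$ equal to Haar on $G/\Gamma$. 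Such a $\mu$ is a priori invariant only under the Cartan subgroup, not under $G$, so it does not descend to a $\Gamma$-invariant measure on $M$, and cocycle superrigidity cannot be applied to it. Your appeal in Step 2 to ``upper semicontinuity of top Lyapunov exponents \ldots as in BFH1'' silently assumes that vanishing for measures coming from $\Gamma$-invariant measures on $M$ implies vanishing for all $A$-invariant measures on $M^\alpha$ projecting to Haar; this implication is false in general and is precisely what must be proved.

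Closing that gap is the main content of the paper: Proposition \ref{prop: main} shows that every $H$-invariant $H$-ergodic measure on $M^\alpha$ projecting to Haar is automatically $G$-invariant, and its proof occupies the bulk of the text (the parabolic-subgroup combinatorics of Lemma \ref{prop. para} and Proposition \ref{prop. liegroup2}, the nonatomicity of conditional measures along coarse Lyapunov foliations in Proposition \ref{main tech prop}, and the Einsiedler--Katok noncommuting-foliations argument). Only after this step does one obtain a $\Gamma$-invariant measure on $M$ to which your Step 1 dimension count (which is correct: the minimal nontrivial real representation of $SL(n,\C)$ has real dimension $2n > \dim M$) applies. Two smaller points: the $C^{2}$ hypothesis is not what drives the subexponential-growth step --- Proposition \ref{thm small derivates} holds already for $C^{1+\epsilon}$ actions --- rather it is needed for the regularity bookkeeping in the final averaging step; and since $\Gamma$ has property (T) it is not amenable, so the amenable-type averaging sets you mention are not available --- the argument in \cite{BFH1} rests on \emph{strong} property (T).
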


\subsection{Further extensions}
The method of this paper can be generalized to other simple complex Lie group as well. In an on-going joint work with Jinpeng An, we will address Conjecture \ref{conj finite} for all simple complex Lie groups. This will appear as a second version of this paper.

\subsection*{Notation}
For any positive integer $m$, we denote by $[m]$ the set $\{1 \cdots, m\}$.
For any metric space $Z$, we use $\cB_Z$ to denote the Borel $\sigma$-algebra of $Z$, and use $\cal M (Z)$ to denote the set of Radon measures on $Z$. Given a measurable partition $\xi$, we denote by $\cB_{\xi}$ the $\sigma$-algebra generated by $\xi$.

\section{Preliminary}

Let $M$ be a connected, compact manifold.

Let $G = SL(n, \C)$ and let $\liealgg = sl(n, \C)$.

Let $H$ be the standard Cartan subgroup of $G$, i.e., $H$ is the subgroup of diagonal matrices in $G$.
We have $H = MA$ where 
$A$ is the subgroup  consisted of positive real diagonal matrices in  $G$; and $M$ is the subgroup consisted of diagonal matrices in $G$ with unit complex numbers 
on the diagonal.

  For each $1 \leq i, j \leq n$, let $E_{i,j}$ denote the $n \times n$-matrix whose entry at $i$-th row $j$-th column equals $1$, and $0$ at all other places.
We can see that the Lie algebra of $A$ and $M$ are respectively,
\aryst
\liealga =\{\sum_{i=1}^n a_i E_{i,i} \mid \sum_{i=1}^n a_i = 0, a_i \in \R \} \quad \mbox{and} \quad \liealgm = i\liealga.
\earyst
For any linear functional $\ell$ on $\liealga$, we denoted by $[\ell]$ the set of linear functionals on $\liealga$ which are positively proportional to $\ell$.
We let $\Sigma$ be the set of coarse restricted roots of $G$.
In our case, the coarse restricted roots are in bijection with the restricted roots. We will however adopt this notion in \cite{BFH1} to facilitate the citation of certain theorems.
We can show that $\Sigma = \{ [\gamma_{i,j}]  \mid 1 \leq i \neq j \leq n \}$ where we set $\gamma_{i,j}  = E_{i,i}^{*} - E_{j,j}^*$.
When there is no confusion, we slightly abuse the notion and write $\chi$ instead of $[\chi]$, for instance, we say that
 the root space for $\gamma_{i,j}$ equals $\C E_{i,j}$, which we denote by $\liealgg^{\chi_{i,j}}$. 
For each $\chi \in \Sigma$, we denote by $G^{\chi}$ the root subgroup of $\chi$, and denote by $\nu_{G^\chi}$ the Haar measure on $G^\chi$.
Also we denote $L_\chi = {\rm Ker}(\chi)$, and let $H_{\chi}$ denote the subgroup of $A$ corresponding to $L_\chi$.
We denote  $\Sigma^+ = \{ \gamma_{i,j} \mid 1 \leq i < j  \leq n \}$ and  $\Sigma^- = \{ \gamma_{i,j} \mid 1 \leq j < i \leq n \}$.
 We let $P$ denote the Borel subgroup of $G$ relative to our choice of $\Sigma^+$, i.e., the subgroup consisted of upper triangluar matrices. 
It is clear that $P$ is generated by $A, M$ and $G^{\chi}$, $\chi \in \Sigma^+$.

\subsection{Suspension space}

Let $\Gamma$ be a co-compact lattice in $G$. 
Let $\alpha: \Gamma \to \diff^{1+\epsilon}(M)$ be an right action, i.e., $\alpha(gh) = \a(h)\a(g)$.
 As in \cite{BFH1}, we consider the right $\Gamma$-action
 \aryst
 (g,x) \cdot \gamma = (g\gamma, \a(\gamma)(x))
 \earyst
 and the left $G$-action 
 \aryst
 a \cdot (g,x) = (ag,x).
 \earyst
 Let  $ M^\a = (G \times M) / \Gamma$, and let $\ta$ denote the left $G$-action on $M^\a$.
To simply notation, we will abbreviate $\ta(\exp(k))$ as $\ta(k)$ for every $k \in \liealga$.
We denote the canonical projection from $M^\a$ to $G /\Gamma$ by $\pi$.

Let $\mu$ be an $A$-invariant $A$-ergodic measure on $M^\a$.  
For any $k \in \liealga$, for $\mu$-a.e. $x$, we denote by  $\cW^{-}_{\ta(k)}(x)$, resp. $\cW^{+}_{\ta(k)}(x)$, the stable manifold, resp. unstable manifold, through $x$ for the map $\ta(k)$. 

For each $\chi \in \Sigma$, we define $E^{\chi}$, $E^{\chi}_F$, $E^{\chi}_G$, $\cW^{\chi}$, $\cW^{\chi}_F$ and $\cW^{\chi}_G$ as in \cite{BRHW2}.  For example, we have
\aryst
\cW^{\chi}(x) = \bigcap_{k \in \liealga, \chi(k) < 0} \cW^{-}_{\ta(k)}(x).
\earyst
It is clear that  $\dim E^{\chi}_F$ is $\mu$-a.e. constant.

\subsection{Conditional measure}\label{subsec: Conditional measure}

In this section, we define a collection of equivalence classes of measures $\{[\mu^{\cW^\chi}_x] \}_{x \in M^\a}$ where each $\mu^{\cW^\chi}_x$ is a measure defined up to a scalar with the property that $\mu^{\cW^\chi}_x$ is supported on $\cW^\chi(x)$.
Moreover, this collection is invariant under the $A$-action.

Let $\xi$ be a measurable partition subordinate to $\cW^\chi$. We let $\{ \mu^\xi_x \}_{x \in M^\a}$ denote the conditional measure associate to $\xi$. Let $\xi_1, \xi_2$ be two measurable partitions subordinate to $\cW^\chi$. Then for $\mu$-a.e. $x$, the restrictions of $\mu^{\xi_1}_x$ and $\mu^{\xi_2}_x$ to $\xi_1(x) \cap \xi_2(x)$ coincides up to a factor.

Take $k_0 \in \liealga$ such that $\chi(k_0) > 0$, and take $f=\ta(k_0)$. 
We take $\xi$, an $f$-increasing measurable partition subordinate to $\cW^\chi$.
We take an arbitrary precompact open neighborhood of $x$ in $\cW^\chi$, denoted by $U$. For $\mu$-a.e. $x$, we define 
\aryst
\mu^{\cW^\chi}_x = \lim_{n \to \infty} [\mu^{f^{n}(\xi)}_x(U)]^{-1} \mu^{f^{n}(\xi)}_x.
\earyst
It is direct to verify that the definition of $\mu^{\cW^\chi}_x$ is independent of the choice of $\xi$.
We say that two Radon measures $\zeta_1$, $\zeta_2$ on $\cW^\chi$ are equivalent if there is $c > 0$ such that $\zeta_2 = c\zeta_1$. Given a Radon measure $\zeta$ on $\cW^\chi$, we denote by $[\zeta]$ the equivalence class of $\zeta$. We notice that $[\mu^{\cW^\chi}_x]$ is independent of the choice of $U$.

By the $A$-invariance of $\mu$, we claim that for any $k \in \liealga$, for $\mu$-a.e. $x$, we have
\ary \label{item: invarianceconditionalmeasure}
[D\ta(k)_*\mu^{\cW^\chi}_x]  = [ \mu^{\cW^\chi}_{\ta(k)(x)} ].
\eary

We define $\{ [\mu^{\cW^\chi_F}_x] \}_{x \in M^\a}$ in an analogous way. We can see that $\{ [\mu^{\cW^\chi_F}_x] \}_{x \in M^\a}$ is also $A$-invariant.

\subsection{Coarse restricted root}
Given an $A$-invariant, $A$-ergodic measure $\mu$,
we consider the following subsets of $\Sigma$:
\aryst
\Sigma^{out} &=& \{\chi \in \Sigma \mid  E^{\chi}_F \neq \{0\} \}, \\
\Sigma^{out}_1 &=& \{ \chi \in \Sigma^{out} \mid \dim E^{\chi}_F \geq 2\}, \\
\Sigma^{out}_2 &=& \{ \chi \in \Sigma^{out} \mid \dim E^{\chi}_F =1, \dim E^{-\chi}_F \geq 1  \}, \\
\Sigma^{out}_3 &=& \Sigma^{out} \setminus (\Sigma^{out}_1 \cup \Sigma^{out}_2).
\earyst
We notice that the above subsets can also be defined for any $H$-ergodic measure $\mu$.
Indeed, we can define the above subsets of $\Sigma$ for each $A$-ergodic component of $\mu$. As $M$ is compact and commutes with $A$, $\dim E^\chi_*$ and $\Sigma^{out}_*$ are the same for all $A$-ergodic components of $\mu$ (see the paragraph below \cite[Theorem 5.8]{BFH1}).

It is clear that 
\ary \label{lab: sigmaout12}
2|\Sigma^{out}_1 \cup \Sigma^{out}_2| + |\Sigma^{out}_3| \leq \dim M.
\eary

Given a closed subgroup $Q \subset G$ containing $H$.
 We define
\ary \label{defsigmaq}
\Sigma_Q = \{ \chi \in \Sigma \mid  G^{\chi} \subset Q \}.
\eary
By \cite[Proposition 5.1]{BRHW2}, we have
\ary \label{lab: sigmasigmaotusigmaq}
\Sigma \setminus \Sigma^{out} \subset \Sigma_{Q}
\eary
for $Q = \{g \in G \mid g_* \mu = \mu \}$.

The following proposition\footnote{Lemma \ref{prop. para} is proved by Jinpeng An.} plays an important role in our proof.
\begin{lemma}\label{prop. para}
Let $Q$ be a closed subgroup of $G$ such that $H \subset Q$. If $n \geq 2$ and we have
\aryst
	|\Sigma \setminus \Sigma_Q| < 2n-2, 
\earyst
then $Q$ is a parabolic subgroup of $G$.
\end{lemma}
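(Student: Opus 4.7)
The plan is to reduce the lemma to a combinatorial statement about the restricted root system, namely that $\Sigma_Q$ is a \emph{parabolic subset} of $\Sigma$---closed under sums that remain in $\Sigma$, and containing at least one element of every pair $\{\chi, -\chi\}$. Once this is established, I will verify that the Lie algebra $\liealgq$ of $Q$ equals $(\liealga \oplus \liealgm) \oplus \bigoplus_{\chi \in \Sigma_Q} \liealgg^\chi$ and is therefore a parabolic subalgebra. The identity component $Q^0$ will then be a parabolic subgroup of $G$; and since parabolic subgroups are self-normalizing and $Q$ normalizes $Q^0$, we will conclude $Q = Q^0$.

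First, I would verify that $\Sigma_Q$ is closed under addition within $\Sigma$. In type $A_{n-1}$ the only nontrivial sum of two roots lying in $\Sigma$ has the form $\gamma_{i,j} + \gamma_{j,l} = \gamma_{i,l}$ with $i,j,l$ pairwise distinct, and the bracket identity $[E_{i,j}, E_{j,l}] = E_{i,l}$ shows that $G^{\gamma_{i,l}}$ lies in the group generated by $G^{\gamma_{i,j}}$ and $G^{\gamma_{j,l}}$. Next, since $H \subset Q$, the subalgebra $\liealgq$ is $\mathrm{Ad}(H)$-invariant and decomposes as $(\liealga \oplus \liealgm) \oplus \bigoplus_{\chi \in \Sigma} (\liealgq \cap \liealgg^\chi)$. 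The root space $\liealgg^\chi = \C E_{i,j}$ is two-dimensional over $\R$, and the compact factor $M$ acts on it by rotation by unit complex numbers; the only $\mathrm{Ad}(M)$-invariant subspaces are therefore $\{0\}$ and $\liealgg^\chi$ itself, which forces $\chi \in \Sigma_Q$ to be equivalent to $\liealgg^\chi \subset \liealgq$ and gives the desired root-space description of $\liealgq$.

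The main combinatorial step is to show that $\Psi := \Sigma \setminus \Sigma_Q$ contains no pair $\{\chi, -\chi\}$. Suppose for contradiction that both $\gamma_{i,j}$ and $\gamma_{j,i}$ belong to $\Psi$. For each $k \in \{1, \ldots, n\} \setminus \{i, j\}$, apply closedness of $\Sigma_Q$ to the decompositions $\gamma_{i,j} = \gamma_{i,k} + \gamma_{k,j}$ and $\gamma_{j,i} = \gamma_{j,k} + \gamma_{k,i}$: if both summands of either sum lay in $\Sigma_Q$, then so would the sum itself. Hence at least one root from $\{\gamma_{i,k}, \gamma_{k,j}\}$ and at least one from $\{\gamma_{j,k}, \gamma_{k,i}\}$ belongs to $\Psi$. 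The four roots produced as $k$ ranges over the $n-2$ remaining indices are pairwise distinct and distinct from $\gamma_{i,j}, \gamma_{j,i}$, so $|\Psi| \geq 2 + 2(n-2) = 2n-2$, contradicting the hypothesis.

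The subtlest point is that the second step genuinely requires the compact factor $M$: real one-dimensional subspaces of $\liealgg^\chi = \C E_{i,j}$ are automatically $\mathrm{Ad}(A)$-invariant, so without the inclusion $M \subset Q$ the Lie algebra of $Q$ need not be spanned by full root spaces and the reduction to root combinatorics would break. The counting in the third step saturates at exactly $2n-2$, precisely matching the bound in the hypothesis, so I expect this combinatorial step to carry essentially the entire content of the lemma once the first two steps are in place.
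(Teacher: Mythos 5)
Your proof is correct and follows essentially the same route as the paper: the heart of both arguments is the identical count of the $2n-2$ pairwise disjoint sets $\{\gamma_{i,j}\}$, $\{\gamma_{j,i}\}$, $\{\gamma_{i,k},\gamma_{k,j}\}$, $\{\gamma_{j,k},\gamma_{k,i}\}$ showing that $\Sigma_Q\cup(-\Sigma_Q)=\Sigma$ (you phrase it contrapositively). The only difference is that you spell out the Lie-theoretic reduction (closedness of $\Sigma_Q$, the $\mathrm{Ad}(M)$-invariance argument ruling out partial root spaces, and self-normalization for connectedness) where the paper simply cites Bourbaki.
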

\begin{proof}
In view of \cite[Page 92, Prop. 11]{Bo}, it suffices to verify $\Sigma_Q \cup (-\Sigma_Q) = \Sigma$, i.e., for every $\gamma_{i,j} \in \Sigma$, either $\gamma_{i,j}$ or $\gamma_{j,i}$ lies in $\Sigma_Q$. To show this, consider the following $2n-2$ mutually disjoint sets:
\aryst
\{\gamma_{i,j}\}, \{\gamma_{j,i}\}, \{\gamma_{i,k}, \gamma_{k,i}\}, \{ \gamma_{j,k}, \gamma_{k,i} \}, \quad k \in [n] \setminus \{i,j\}. 
\earyst 
It follows from the assumption that at least one of such sets is contained in $\Sigma_Q$. If $\{\gamma_{i,j}\}$ or $\{\gamma_{j,i}\}$ is contained in $\Sigma_Q$, then there is nothing to prove. If $\{\gamma_{i,k}, \gamma_{k,j}\} \subset \Sigma_Q$, then $\gamma_{i,j} = \gamma_{i,k} + \gamma_{k,j} \in \Sigma_Q$. Similarly, if $\{\gamma_{j,k}, \gamma_{k,i}\} \subset \Sigma_Q$, then $\gamma_{j,i} = \gamma_{j,k} + \gamma_{k,i} \in \Sigma_Q$.
\end{proof}

\begin{defi}
	Let $Q$ be a subgroup of $G$ containing $H$. 
	We let  $\Sigma^{non}_Q$ be the set of $\chi \in \Sigma_Q$
	such that there exist $\chi_1, \chi_2 \in \Sigma_Q \setminus \{ \pm \chi\}$ such that
	$\chi =  \chi_1  +  \chi_2$.
\end{defi}

\begin{prop}\label{prop. liegroup2}
	Let $Q$ be a parabolic subgroup of $G$ with $|\Sigma \setminus \Sigma_Q| < 2n-2$.
	 Then for any subset $I \subset \Sigma \setminus \Sigma_Q$ such that $|I| < 2n-2  - |\Sigma| + |\Sigma_Q|$, there exists $\chi \in \Sigma \setminus (\Sigma_Q \cup I)$ such that
	 there exists $\chi' \in \Sigma_Q$  satisfying $\chi + \chi' \in \Sigma \setminus \Sigma_Q$. In particular, $-\chi \in \Sigma^{non}_Q$.
\end{prop}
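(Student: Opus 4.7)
The plan is to argue by contradiction. Set $\Omega := \Sigma \setminus \Sigma_Q$, and let $F \subset \Omega$ denote the subset of those $\chi \in \Omega$ for which no $\chi' \in \Sigma_Q$ satisfies $\chi + \chi' \in \Omega$. If the conclusion fails, then $\Omega \setminus I \subset F$, so $\Omega \subset I \cup F$; combined with the hypothesis $|I| < 2n-2-|\Omega|$, this yields $|F| \geq |\Omega|-|I| > 2|\Omega|-2n+2$. It therefore suffices to prove the reverse bound $|F| \leq 2|\Omega|-2n+2$, which will produce the desired contradiction.

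To establish it, I would pass to the standard classification of parabolic subgroups of $G$ containing $H$: these correspond bijectively to ordered partitions $B_1,\ldots,B_r$ of $[n]$ (with $r \geq 2$, as otherwise $\Omega = \emptyset$ and the conclusion is vacuous), with $\gamma_{i,j} \in \Sigma_Q$ iff $\phi(i) \leq \phi(j)$, where $\phi(i) \in [r]$ denotes the block-index of $i$. Writing $n_a = |B_a|$, one has $|\Omega| = \sum_{a<b} n_a n_b = (n^2 - \sum_a n_a^2)/2$. A direct case analysis then pins down $F$: for $\chi = \gamma_{i,j} \in \Omega$ (so $\phi(i) > \phi(j)$), a valid $\chi'$ can be sought in either of the two forms $\gamma_{j,l}$ with $\phi(j) \leq \phi(l) < \phi(i)$ or $\gamma_{k,i}$ with $\phi(j) < \phi(k) \leq \phi(i)$, and such an $l$ (resp.\ $k$) exists \emph{unless} $\phi(i) = \phi(j)+1$ together with $|B_{\phi(j)}|=1$ (resp.\ $|B_{\phi(i)}|=1$). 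Hence $\chi \in F$ exactly when $\phi(i) = \phi(j)+1$ and both $B_{\phi(j)}, B_{\phi(i)}$ are singletons, so $|F|$ equals the number of $a \in [r-1]$ with $n_a = n_{a+1}=1$.

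The required bound then reduces to the elementary inequality $\sum_a n_a^2 + |F| \leq (n-1)^2 + 1$, which I would prove by induction on $n$ (assuming $n \geq 3$ and $r \geq 2$). The base case $n=3$ is verified by enumerating the three profiles $(2,1), (1,2), (1,1,1)$, each attaining the value $5$. In the inductive step one either decrements $n_r$ by $1$ when $n_r \geq 2$, or deletes the last block when $n_r=1$ and $r \geq 3$, obtaining a valid profile on $n-1$ elements and invoking the inductive hypothesis; the boundary $r=2$ is handled directly via $n_1^2 + n_2^2 \leq (n-1)^2+1$. The delicate part here is the bookkeeping, since each such reduction can create or destroy a singleton adjacency, so I would organize the analysis around whether $n_{r-1}$ and $n_r$ are singletons; in the worst subcase the total $\sum n_a^2 + |F|$ decreases by exactly $2$ under the reduction, which is precisely enough (for $n \geq 3$) to close the induction. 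Once the combinatorial bound is in hand, the "in particular" assertion $-\chi \in \Sigma^{non}_Q$ follows immediately: with $\chi' \in \Sigma_Q$ as produced and $\chi'' := -(\chi+\chi')$, the parabolicity of $Q$ places $\chi''$ in $\Sigma_Q$, and $-\chi = \chi' + \chi''$ with $\chi',\chi'' \notin \{\pm \chi\}$ (since $\chi' \in \Sigma_Q$ while $\chi \in \Omega$, $\chi' = -\chi$ would force $\chi+\chi'=0$, $\chi'' = \pm\chi$ would force $\chi'$ to be $\pm 2\chi$ or $0$) — all impossibilities for a root system.
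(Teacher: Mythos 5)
Your argument is correct, and it takes a genuinely different route from the paper's. The paper normalizes to $P\subset Q$, describes $\Sigma\setminus\Sigma_Q$ via cut points $1\le i_1<\cdots<i_p\le n-1$, and then splits into cases: if some $i_l\in[2,n-2]$ then $|\Sigma\setminus\Sigma_Q|\ge 2(n-2)$ forces $|I|\le 1$ and one of the explicit witnesses $\gamma_{n,1},\gamma_{n,2}$ (paired with $\chi'=\gamma_{n-1,n}$) survives; otherwise $\Sigma\setminus\Sigma_Q$ is one of three explicit configurations, checked by hand, with $n=3$ also verified separately. You instead compute the failure set $F$ exactly and prove the sharp counting bound $|F|\le 2|\Sigma\setminus\Sigma_Q|-(2n-2)$, equivalently $\sum_a n_a^2+|F|\le (n-1)^2+1$. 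Your characterization of $F$ is right: in type $A_{n-1}$ the only candidates for $\chi'$ are $\gamma_{j,l}$ and $\gamma_{k,i}$, the relevant index sets $B_{\phi(j)}\cup\cdots\cup B_{\phi(i)-1}$ and $B_{\phi(j)+1}\cup\cdots\cup B_{\phi(i)}$ always contain $j$ (resp.\ $i$), and they contain nothing else exactly when the two blocks are adjacent singletons. The combinatorial inequality is true and your induction closes; it also follows in one line from the crude bounds $\sum_a n_a^2\le (n-r+1)^2+(r-1)$ and $|F|\le r-1$ together with $(n-r+1)^2+2(r-1)\le(n-1)^2+1$ for $2\le r\le n$, $n\ge 3$, which would spare you the adjacency bookkeeping. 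What your approach buys is uniformity (no case analysis on the shape of $Q$, no separate $n=3$ check) plus a sharp count of the failing roots, at the cost of invoking the block-partition description of all parabolics containing $H$; the paper's proof is shorter but leans on several "verify directly" steps. Two minor quibbles: when $\Sigma\setminus\Sigma_Q=\emptyset$ (i.e.\ $Q=G$) the conclusion is not vacuous but false as literally stated, so like the paper you are implicitly assuming $Q\subsetneq G$ (harmless, since that is the situation in the application); and the final "in particular" step is handled identically in both proofs.
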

\begin{proof}
	We first notice that if there exists $\chi' \in \Sigma_Q$ satisfying 
	\aryst
	\chi''' := \chi' + \chi \in \Sigma \setminus \Sigma_Q.
	\earyst
	Then as $Q$ is parabolic, $\chi'' := -\chi''' \in \Sigma_Q$. Consequently, we have
	$-\chi = \chi' + \chi''$. It is clear that $\chi', \chi'' \notin \{\pm \chi\}$. Thus $-\chi \in \Sigma^{non}_Q$.
	
	Without loss of generality, we may assume that $P \subset Q$. By \cite[V. 7, Proposition 5.90]{Knap} (see also  \cite[Section 2.1]{BRHW2}), there exist constants $1 \leq i_1 < \cdots < i_p \leq n-1$ such that
	\aryst
	\Sigma \setminus \Sigma_Q = \cup_{l=1}^{p} \{ \chi_{u,v} \mid v \leq i_p < u \}.
	\earyst
	The case where $n = 3$ can be verified directly. In the following we assume that $n \geq 4$.
	
	We first assume that there exists $1 \leq p \leq l$ such that $2 \leq i_p \leq n-2$. Then we have
	\aryst
	|\Sigma \setminus \Sigma_Q| \geq 2(n-2).
	\earyst
	Hence $|I| \leq 1$.
	
	We notice that both $\chi_{n,1}$, $\chi_{n,2}$ belongs to $\Sigma \setminus \Sigma_Q$. Thus there exists $\chi \in \{ \chi_{n,1}, \chi_{n,2}\}$ such that $\chi \in \Sigma \setminus (\Sigma_Q \cup I)$. Notice that $\chi_{n-1,n} \in \Sigma_Q$. Then we have
	\aryst
	\chi + \chi_{n-1,n} \in \{ \chi_{n-1, 1}, \chi_{n-1, 2}\} \subset \Sigma \setminus \Sigma_Q.
	\earyst
	This concludes the proof in this case.
	
	If there exists no such $\chi_p$, then there are only three possibilities for $\Sigma \setminus \Sigma_Q$ : 
	\enmt
	\item $\{n\} \times [n-1]$; 
	\item $([n]\setminus [1]) \times [1]$;
	\item the union of the above two.
	\eenmt
	In each of the above cases, we can verify the proposition directly.
\end{proof}

\section{Proof of the main theorem}

\subsection{Review of BFH}
The first step in the proof of Theorem \ref{thm: main2} is to show that $\a$ has uniform subexponential growth of derivatives.
\begin{defi}
	Let $\a : \Gamma \to \diff^{1}(M)$ be an action of $\Gamma$ on a compact manifold $M$ by $C^1$ diffeomorphisms. We fix an arbitrary $C^\infty$ Riemannian metric on $M$. We say that $\a$ has \textit{uniform subexponential growth of derivatives} if for every $\varepsilon > 0$ there is a constant  $C_\varepsilon > 0$ such that for all $\gamma \in \Gamma$ we have
	\aryst
	\norm{D\a(\gamma)} \leq C_{\varepsilon} e^{\varepsilon \ell( \gamma)}.
	\earyst
	It is clear that the above definition is independent of the choice of the metric on $M$.
\end{defi}
\begin{prop} \label{thm small derivates}
	Let $n \geq 3$ be an integer, and let $\Gamma < SL(n, \C) $ be a co-compact lattice. Let $M$ be a connected, compact manifold satisfying $\dim M < 2n-2$. Then $\a$ has uniform exponential growth of derivatives.
\end{prop}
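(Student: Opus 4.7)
The plan is to argue by contradiction along the lines of \cite{BFH1, BRHW2}, with Lemma \ref{prop. para} and Proposition \ref{prop. liegroup2} as the new combinatorial ingredients that push the dimensional bound up to $2n-2$. Suppose that $\alpha$ does not have uniform subexponential growth of derivatives. The first step is to produce a bad invariant measure: I would apply the fiberwise derivative-averaging procedure of \cite[Sections 4--5]{BFH1} to the derivative cocycle on $M^\alpha$, normalizing along a sequence in $A$ that witnesses the failure of USGD and passing to a weak-$*$ subsequential limit. This should yield an $A$-invariant, $A$-ergodic probability measure $\mu$ on $M^\alpha$ projecting to Haar on $G/\Gamma$, whose fiberwise Oseledec spectrum contains a non-zero Lyapunov functional.

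The next step is to pin down the coarse Lyapunov data. Applying Zimmer cocycle superrigidity to the fiberwise derivative cocycle, together with the Ledrappier--Young entropy formula exactly as in \cite{BRHW2, BFH1}, identifies the non-zero fiberwise Lyapunov exponents of $\mu$ with coarse restricted roots, so $\Sigma^{out} \neq \emptyset$. The hypothesis $\dim M < 2n-2$ combined with \eqref{lab: sigmaout12} then yields $|\Sigma^{out}| < 2n-2$. Setting $Q = \{g \in G : g_*\mu = \mu\}$, we have $H \subseteq Q$ and, by \eqref{lab: sigmasigmaotusigmaq}, $\Sigma \setminus \Sigma_Q \subseteq \Sigma^{out}$, hence $|\Sigma \setminus \Sigma_Q| < 2n-2$. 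Lemma \ref{prop. para} then forces $Q$ to be parabolic.

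With parabolicity secured, the plan is to invoke Proposition \ref{prop. liegroup2} to obtain $\chi \in \Sigma \setminus (\Sigma_Q \cup I)$ and $\chi' \in \Sigma_Q$ with $\chi + \chi' \in \Sigma \setminus \Sigma_Q$, where $I$ is chosen to absorb the exceptional roots in $\Sigma^{out}_1 \cup \Sigma^{out}_2$ that contribute multiple real dimensions to the fiber; the slack in \eqref{lab: sigmaout12} keeps $|I|$ within the allowed bound of Proposition \ref{prop. liegroup2}. Because $G^{\chi'} \subseteq Q$ preserves $\mu$, conjugation by $G^{\chi'}$ maps the coarse unstable manifold $\cW^\chi$ into $\cW^{\chi+\chi'}$, and the invariance of the conditional-measure classes recorded in \eqref{item: invarianceconditionalmeasure} provides a compatibility between $[\mu^{\cW^\chi}_x]$ and $[\mu^{\cW^{\chi+\chi'}}_x]$. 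A measure-rigidity / exponential-drift argument of Katok--Spatzier / BRHW-type, using the conditional-measure framework of Section \ref{subsec: Conditional measure}, should then force $G^\chi \subseteq Q$, contradicting $\chi \notin \Sigma_Q$.

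The hardest step will be the last one, where the purely root-combinatorial output of Proposition \ref{prop. liegroup2} must be converted into a genuine measure-theoretic invariance $G^\chi \subseteq Q$. This requires the full conditional-measure machinery and the high/low entropy method for abelian actions developed in \cite{BRHW2}, and special care is needed in the $SL(n,\C)$-setting: the complex structure doubles the real dimension of each root space, which drives both the factor of $2$ in \eqref{lab: sigmaout12} and the dimensional bound $2n-2$ in the hypothesis, and these numbers must match up exactly for the combinatorics of Proposition \ref{prop. liegroup2} to apply.
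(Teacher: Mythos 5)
Your reduction to the combinatorics matches the paper's: the bad measure $\mu$ from the BFH averaging procedure, the stabilizer $Q\supseteq H$ with $|\Sigma\setminus\Sigma_Q|\leq\dim M<2n-2$, parabolicity via Lemma \ref{prop. para}, and the choice $I=(\Sigma^{out}_1\cup\Sigma^{out}_2)\setminus\Sigma_Q$ feeding into Proposition \ref{prop. liegroup2} are all exactly what the paper does. The gap is in your last step. You describe the passage from the root-combinatorial output to a measure-theoretic contradiction as "a measure-rigidity / exponential-drift argument of Katok--Spatzier / BRHW-type" that "should then force $G^\chi\subseteq Q$." This is not a routine application of existing machinery, and it is precisely where the paper's genuinely new work lives. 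The Einsiedler--Katok noncommuting-foliations method, which is what the paper invokes, takes as \emph{input} that the leafwise measure $\mu^{G^{\chi'}}_x$ is Haar (true, since $G^{\chi'}\subseteq Q$) \emph{and} that the leafwise measure $\mu^{G^{\chi}}_x$ is non-atomic; its \emph{output} is then that $\mu^{G^{\chi''}}_x$ is Haar for $\chi''=\chi+\chi'$, giving $G^{\chi''}\subseteq Q$ and contradicting $\chi''\notin\Sigma_Q$ (not $G^\chi\subseteq Q$ as you claim). The non-atomicity of $\mu^{G^\chi}_x$ for $\chi\in\Sigma^{out}_3\cap(-\Sigma^{non}_Q)$ is Proposition \ref{main tech prop}, whose proof occupies two full sections (time change and measurable Lyapunov foliations \`a la Kalinin--Katok--Rodriguez Hertz, the $\pi$-partition trick, Ledrappier--Young entropy comparisons, and non-stationary normal forms). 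The difficulty is exactly the feature you flag in your last paragraph: for $\chi\in\Sigma^{out}_3$ the fiberwise space $E^\chi_F$ is one-real-dimensional while $G^\chi$ is two-real-dimensional, so the standard BFH/BRHW resonance argument (which only yields $G^\chi$-invariance when $E^\chi_F=0$) does not apply, and one must extract non-atomicity of the $G^\chi$-conditional from the dynamics in the fiber direction. Your proposal contains no mechanism for this.

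A secondary, smaller omission: your contradiction only shows $Q=G$, i.e., that $\mu$ is $G$-invariant. To contradict the failure of uniform subexponential growth you must still rule out a $G$-invariant $\mu$ with $\lambda^F_+(s,\mu)>0$; the paper does this by descending to a $\Gamma$-invariant measure on $M$ and applying Zimmer's cocycle superrigidity (using $\dim M<2n=n(G)$) to produce an invariant measurable metric, forcing all fiberwise exponents to vanish. You use superrigidity earlier to identify fiberwise exponents with roots, but you do not close this final loop.
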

We now start with the proof of Proposition \ref{thm small derivates}.

We assume to the contrary that $\a$ does not have uniform subexponential growth of derivatives.
Then by combining \cite[Proposition 3.6]{BFH1}, \cite[Claim 3.5]{BFH1} and \cite[Proof of Proposition 3.7]{BFH1}, we have
\begin{prop}\label{prop. prop3.7inbfn1}
	There exists an $s \in A$ and an $H$-invariant $H$-ergodic Borel probability measure $\mu$ on $M^\a$ with $\lambda^F_{+}(s, \mu) > 0$
	such that $\pi_*\mu$ is the Haar measure on $G / \Gamma$.
	Here $\lambda^F_{+}(s, \mu)$ is the \textit{maxmal fiberwise Lyapunov exponent} for $s \in A$ with respect to $\mu$ 
	given by the formula
	\aryst
	\lambda^F_+(s,\mu) = \inf_{n \to \infty}\frac{1}{n} \int \log \norm{D\ta(s^n) \restriction E_F(x)} d\mu(x).
	\earyst
\end{prop}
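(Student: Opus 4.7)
The plan is to construct $\mu$ as a weak-$*$ limit of empirical measures built from the sequence $\gamma_n$ witnessing the failure of uniform subexponential growth, then extract an $H$-ergodic component retaining both the Haar projection and the positive fibrewise exponent.

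\textbf{Step 1: Translate the hypothesis via the $KAK$ decomposition.} By hypothesis there exist $\eps_0>0$ and $\gamma_n\in\Gamma$ with $\ell(\gamma_n)\to\infty$ and $\|D\a(\gamma_n)\|\ge e^{\eps_0\ell(\gamma_n)}$; pick $x_n\in M$ realising (up to a factor two) the supremum. Write the Cartan decomposition $\gamma_n=k_n\exp(H_n)k_n'$ with $H_n\in\overline{\liealga^+}$. Cocompactness of $\Gamma$ yields $\|H_n\|\asymp\ell(\gamma_n)$, and since the compact factors $k_n,k_n'$ contribute only a uniformly bounded multiplicative error, we obtain lifts $y_n\in M^\a$ with
$$\|D\ta(\exp(H_n))|_{E_F(y_n)}\|\ge c\,e^{\eps_0\|H_n\|}.$$

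\textbf{Step 2: Empirical averaging in the limiting Cartan direction.} After passing to a subsequence, $v_n:=H_n/\|H_n\|\to v\in\overline{\liealga^+}$, $\|v\|=1$; set $s=\exp(v)\in A$ and $T_n=\|H_n\|$. Let $\eta$ be a smooth probability measure on $M^\a$ with $\pi_*\eta$ equal to Haar on $G/\Gamma$ (obtained by lifting Haar through a bounded Borel section and using fibrewise Lebesgue on $M$). Define
$$\mu_n=\frac{1}{T_n}\int_0^{T_n}(\ta(\exp(tv)))_*\eta\,dt.$$
Each $\mu_n$ is a probability measure with $\pi_*\mu_n=$ Haar by left-invariance of Haar. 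A weak-$*$ subsequential limit $\mu_\infty$ is $\exp(\R v)$-invariant by a standard telescoping argument and still satisfies $\pi_*\mu_\infty=$ Haar.

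\textbf{Step 3: Positivity of the fibrewise exponent.} This is where \cite[Claim 3.5, Proposition 3.6]{BFH1} enter: these lemmas promote the pointwise exponential growth at $y_n$ to the integrated lower bound
$$\int \log\|D\ta(\exp(T_n v))|_{E_F(y)}\|\,d\eta(y)\ge \tfrac{\eps_0}{2}T_n$$
for all large $n$, by using smoothness of $\eta$, the $KAK$ geometry, and cocompactness to spread the pointwise bound over a set of positive $\eta$-measure. Applying Kingman's subadditive ergodic theorem to the subadditive cocycle $t\mapsto \log\|D\ta(\exp(tv))|_{E_F}\|$ under $\mu_\infty$ and taking the limit of the integrated estimate yields $\lambda^F_+(s,\mu_\infty)\ge \eps_0/2 >0$.

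\textbf{Step 4: Upgrade to $H$-invariant, $H$-ergodic with Haar projection.} To obtain full $A$-invariance, average $\mu_\infty$ over a F\o lner sequence in the amenable abelian group $A$, producing $\mu_A$; the base projection remains Haar and $\lambda^F_+(s,\cdot)$ is preserved since $s$ commutes with the averaging translations. Averaging $\mu_A$ over the compact group $M$ (which commutes with $A$) yields the $H=MA$-invariant $\mu_H$, again with Haar projection and positive $\lambda^F_+(s,\cdot)$. Decompose $\mu_H=\int\mu^\omega d\omega$ into $H$-ergodic components. Since Haar on $G/\Gamma$ is $H$-ergodic (indeed $A$-ergodic by Howe--Moore), it is extreme in the compact convex set of $H$-invariant probability measures on $G/\Gamma$; the decomposition $\text{Haar}=\int \pi_*\mu^\omega\,d\omega$ into $H$-invariant measures therefore forces $\pi_*\mu^\omega=$ Haar for a.e. $\omega$. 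Since $\int \lambda^F_+(s,\mu^\omega)d\omega=\lambda^F_+(s,\mu_H)>0$, a positive-measure set of components has positive fibrewise exponent. Pick any such component as $\mu$.

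The main obstacle is Step 3: the pointwise exponential growth at possibly rare points $y_n$ must be propagated to an integrated lower bound against the smooth measure $\eta$. Without this, empirical averaging would wash out the growth. The BFH argument combines the explicit Cartan geometry with a covering/Lusin-type argument to guarantee that the exponential lower bound persists after integration; this is the technical heart of the proof and is precisely what \cite[Claim 3.5, Proposition 3.6]{BFH1} supply.
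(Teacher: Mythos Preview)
Your Steps 1 and 4 are essentially the right shape, and Step 4 is exactly the Howe--Moore extremality argument that \cite[Proof of Proposition 3.7]{BFH1} uses. The gap is in Step 3, and it stems from your choice in Step 2 to average a \emph{global smooth} measure $\eta$ rather than the Dirac mass at the witnessing point $y_n$.

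Knowing $\|D\ta(\exp(H_n))|_{E_F(y_n)}\|\ge e^{\eps_0 T_n}$ at a single point gives no control on the integral $\int\log\|D\ta(\exp(T_n v))|_{E_F}\|\,d\eta$. Continuity of the derivative cocycle only propagates the lower bound to a neighbourhood of $y_n$ of radius $e^{-cT_n}$, whose $\eta$-measure is exponentially small, while on the complement the integrand may well be of order $-CT_n$. No covering or Lusin argument closes this: the claimed inequality simply need not hold, and \cite[Claim 3.5, Proposition 3.6]{BFH1} do \emph{not} assert it. Those results are built around empirical measures supported on the orbit of $y_n$ itself,
\[
\mu_n=\frac{1}{T_n}\int_0^{T_n}\delta_{\ta(\exp(tv_n))y_n}\,dt,
\]
for which the positivity of the fibre exponent is essentially automatic from subadditivity of $t\mapsto\log\|D\ta(\exp(tv_n))|_{E_F}\|$ along that orbit, together with upper semicontinuity of the top exponent for continuous cocycles.

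The trade-off is that with point masses one loses the free Haar projection you arranged via $\eta$; in BFH1 this is repaired \emph{after} passing to $H$-ergodic components, and that repair is the actual content of \cite[Claim 3.5]{BFH1} and the proof of \cite[Proposition 3.7]{BFH1}. Your Step 4 already contains the key idea (Haar is $A$-ergodic by Howe--Moore, hence extremal among $H$-invariant measures on $G/\Gamma$), but as written it presupposes $\pi_*\mu_H=\mathrm{Haar}$, which you only have because of the smooth $\eta$ you cannot afford in Step 3. In short: switch Step 2 to Dirac masses at $y_n$, get positivity for free in Step 3, and then invoke \cite[Claim 3.5, Proof of Proposition 3.7]{BFH1} in Step 4 to recover Haar projection on a positive-measure set of ergodic components.
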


\subsection{From $H$ to $G$}\label{sec. from a to g}
To complete the proof of Proposition \ref{thm small derivates}, it remains to show the following.

\begin{prop}\label{prop: main}
	Assume that  $\dim M < 2n-2$.
	Let $\mu$ be an $H$-invariant $H$-ergodic measure on $M^{\a}$ such that $\pi_*\mu$ is the Haar measure on $G /\Gamma$. Then $\mu$ is $G$-invariant.
\end{prop}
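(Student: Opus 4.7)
The plan is to let $Q := \{g \in G : g_*\mu = \mu\}$ denote the closed stabilizer of $\mu$; since $\mu$ is $H$-invariant we have $H \subset Q$, and the desired $G$-invariance amounts to proving $Q = G$. Combining the inclusion $\Sigma \setminus \Sigma^{out} \subset \Sigma_Q$ from (\ref{lab: sigmasigmaotusigmaq}) with (\ref{lab: sigmaout12}) and $\dim M < 2n-2$, one obtains
\[
|\Sigma \setminus \Sigma_Q| \;\leq\; |\Sigma^{out}| \;\leq\; 2|\Sigma^{out}_1 \cup \Sigma^{out}_2| + |\Sigma^{out}_3| \;\leq\; \dim M \;<\; 2n-2,
\]
so Lemma \ref{prop. para} forces $Q$ to be a parabolic subgroup of $G$. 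It remains to rule out the possibility of $Q$ being a \emph{proper} parabolic.

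Assume for contradiction that $Q \subsetneq G$. I would apply Proposition \ref{prop. liegroup2} with $I$ taken to be a suitable subset of the roots in $\Sigma^{out}_3 \cap (\Sigma \setminus \Sigma_Q)$ — the ``one-sided-exponent'' directions that resist the measure-rigidity step below. The proposition then produces a root $\chi \in \Sigma \setminus (\Sigma_Q \cup I)$ lying in $\Sigma^{out}_1 \cup \Sigma^{out}_2$, with $-\chi \in \Sigma^{non}_Q$ (so $-\chi = \chi_1 + \chi_2$ for some $\chi_1, \chi_2 \in \Sigma_Q \setminus \{\pm\chi\}$), together with a companion $\chi' \in \Sigma_Q$ such that $\chi + \chi' \in \Sigma \setminus \Sigma_Q$. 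The numerical hypothesis of Proposition \ref{prop. liegroup2} is met thanks to the strict inequality in (\ref{lab: sigmaout12}); if a single application does not suffice, one iterates the procedure, enlarging $I$ at each step by the roots already handled.

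The heart of the argument is then to upgrade $Q$-invariance of $\mu$ to include $G^\chi$, contradicting $\chi \notin \Sigma_Q$. Using the $G^{\chi_1}$- and $G^{\chi_2}$-invariance of $\mu$, the $A$-equivariance (\ref{item: invarianceconditionalmeasure}) of the leafwise conditional measures, and the commutation $[\liealgg^{\chi_1}, \liealgg^{\chi_2}] = \liealgg^{-\chi}$, I would invoke a BRHW/Katok--Spatzier type generation-of-invariance lemma: the shearing of $G^{\chi_1}$-orbits by the $G^{\chi_2}$-action transports the conditionals along $\cW^\chi_F$ in a controlled way, and leveraging the non-trivial fibre Lyapunov structure guaranteed by $\chi \in \Sigma^{out}_1 \cup \Sigma^{out}_2$ via $C^{1+\epsilon}$ Pesin theory, this should force the leafwise measure $\mu^{\cW^\chi_F}_x$ to be Haar — equivalently, $\mu$ to be $G^\chi$-invariant. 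The companion $\chi + \chi' \in \Sigma \setminus \Sigma_Q$ supplies the additional coarse-Lyapunov chain needed to feed into this generation lemma.

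The main obstacle is this final generation-of-invariance step. Steps one and two are essentially combinatorial consequences of Lemma \ref{prop. para} and Proposition \ref{prop. liegroup2}, but the dynamical transition from the algebraic relation $-\chi = \chi_1 + \chi_2$ inside $\Sigma_Q$ to a measure-theoretic invariance under $G^\chi$ is delicate: on its own, that relation gives only $-\chi \in \Sigma_Q$, which is compatible with $Q$ being parabolic, so one must genuinely exploit the $H$-dynamics rather than just the Lie algebra. Moreover, in the $SL(n, \C)$ setting each root subgroup $G^\chi$ is two-real-dimensional, and the shearing machinery used in the split $SL(n, \R)$ case of \cite{BFH1} must be adapted to a complex root subgroup — this is precisely the technical input that upgrades the bound of \cite{BFH1} to the optimal $2n-2$ achieved here.
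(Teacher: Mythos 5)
Your opening steps (defining $Q$ as the stabilizer of $\mu$, deriving $|\Sigma\setminus\Sigma_Q|\le\dim M<2n-2$ from \eqref{lab: sigmasigmaotusigmaq} and \eqref{lab: sigmaout12}, and invoking Lemma \ref{prop. para} to make $Q$ parabolic) agree with the paper. But your case selection is inverted, and the inversion breaks both the counting and the dynamics. The paper takes $I=(\Sigma^{out}_1\cup\Sigma^{out}_2)\setminus\Sigma_Q$, precisely so that the root produced by Proposition \ref{prop. liegroup2} lands in $\Sigma^{out}_3$ --- not in $\Sigma^{out}_1\cup\Sigma^{out}_2$ as you propose. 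The hypothesis $|I|<2n-2-|\Sigma|+|\Sigma_Q|$ is verified only because roots of $\Sigma^{out}_1\cup\Sigma^{out}_2$ are counted with weight $2$ in \eqref{lab: sigmaout12}, giving $|I|\le 2|\Sigma^{out}_1\cup\Sigma^{out}_2|+|\Sigma^{out}_3|-|\Sigma^{out}|\le\dim M-|\Sigma\setminus\Sigma_Q|$. With your choice $I\subset\Sigma^{out}_3\cap(\Sigma\setminus\Sigma_Q)$ no such bound is available: $\Sigma^{out}_3$ may have cardinality up to $\dim M=2n-3$, so $|I|+|\Sigma\setminus\Sigma_Q|$ can far exceed $2n-2$, and ``iterating and enlarging $I$'' only makes the hypothesis of Proposition \ref{prop. liegroup2} harder to satisfy.

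The dynamical step is also not the one that closes the argument, and as written it cannot. The relation $-\chi=\chi_1+\chi_2$ with $\chi_1,\chi_2\in\Sigma_Q$ can only generate invariance in the $-\chi$ direction, which is already known for free: $Q$ parabolic and $\chi\notin\Sigma_Q$ force $-\chi\in\Sigma_Q$, so nothing new is gained and no contradiction results. What the paper actually uses is its main technical result, Proposition \ref{main tech prop} (the content of Sections \ref{sec. nonatomic} and \ref{sec.atomic}), which for $\chi\in\Sigma^{out}_3\cap(-\Sigma^{non}_Q)$ shows that $\mu^{G^\chi}_x$ is \emph{non-atomic}; this is where the hypothesis $-\chi\in\Sigma^{non}_Q$ and the one-dimensionality of $E^\chi_F$ enter. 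Combining that non-atomicity with the Haar property of $\mu^{G^{\chi'}}_x$ for $\chi'\in\Sigma_Q$ and the Einsiedler--Katok noncommuting-foliations method applied to $\chi+\chi'=\chi''\in\Sigma\setminus\Sigma_Q$ yields $G^{\chi''}$-invariance of $\mu$, contradicting $\chi''\notin\Sigma_Q$. Your sketch bypasses Proposition \ref{main tech prop} entirely and instead aims to make $\mu^{\cW^\chi_F}_x$ ``Haar,'' but $\cW^\chi_F$ is the fiberwise leaf and carries no group structure --- the orbit direction is $\cW^\chi_G$ --- so the central step of the proof is missing.
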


The main technical proposition of our paper is the following.
\begin{prop}\label{main tech prop}
	Let $Q$ be a parabolic subgroup and let $\mu$ be an $Q$-invariant $H$-ergodic measure on $M^\a$. 
	Then for any $\chi \in \Sigma^{out}_3 \cap (-\Sigma^{non}_Q)$, the conditional measure $\mu^{G^{\chi}}_x$ is non-atomic for $\mu$-a.e. $x$.
\end{prop}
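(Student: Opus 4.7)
I would argue by contradiction: assume $\mu^{G^\chi}_x$ is purely atomic for $\mu$-a.e.\ $x$, and aim to manufacture full $G^\chi$-invariance of $\mu$. Since Haar on $G^\chi$ is non-atomic, this yields the desired contradiction.

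First I unpack the hypotheses. The condition $\chi\in\Sigma^{out}_3$ gives $\dim E^\chi_F=1$ and $E^{-\chi}_F=\{0\}$: a single fiberwise unstable line in the $\chi$-coarse direction, and no fiberwise Lyapunov exponent at all in $-\chi$. The condition $-\chi\in\Sigma^{non}_Q$ gives two pieces of data: (i) $G^{-\chi}\subset Q$, so $\mu$ is $G^{-\chi}$-invariant; and (ii) a decomposition $-\chi=\chi_1+\chi_2$ with $\chi_1,\chi_2\in\Sigma_Q\setminus\{\pm\chi\}$, so $\mu$ is also invariant under $G^{\chi_1}$ and $G^{\chi_2}$. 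The Chevalley commutator relation gives $[\liealgg^{\chi_1},\liealgg^{\chi_2}]=\liealgg^{-\chi}$, so commuting $G^{\chi_1}$ with $G^{\chi_2}$ only produces $G^{-\chi}$-translations, which $\mu$ already absorbs. The nontrivial case is $\chi\notin\Sigma_Q$ (otherwise $\mu$ is $G^\chi$-invariant directly).

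The heart of the proof is an exponential-drift / shearing argument inside the suspension $M^\a$, in the spirit of Eskin--Margulis--Mozes and Benoist--Quint, following the adaptation to suspension actions of lattices developed in \cite{BRHW2} and \cite[Sections 7--10]{BFH1}. I would pick two $\mu$-generic points $x,y$ related by a small nontrivial element of $G^\chi$, as supplied by the atomic structure of $\mu^{G^\chi}_x$ together with $H$-invariance, and renormalize by $\ta(nk)$ for $k\in\liealga$ with $\chi(k)>0$. Forward iteration stretches the $G^\chi$-separation and the 1-dimensional fiber $E^\chi_F$ at the common rate $e^{n\chi(k)}$---this matching rate is crucial and uses $\dim E^\chi_F=1$. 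Using the $G^{\chi_1}$- and $G^{\chi_2}$-invariance of $\mu$, I would insert compensating translations whose accumulated commutator lies in $G^{-\chi}$ and is harmless by the $G^{-\chi}$-invariance of $\mu$; what remains is a net displacement in $G^\chi$. A compactness/selection argument then extracts, in the limit, a nontrivial element of $G^\chi$ under which $\mu$ is invariant. Combining this with $H$-ergodicity (since $H$ acts on $G^\chi$ with nonzero weight $\chi$, the $H$-orbit closure of any nontrivial element of $G^\chi$ is all of $G^\chi$) upgrades to full $G^\chi$-invariance of $\mu$, and the contradiction follows.

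The main obstacle is executing the drift inside $M^\a$ rather than in the homogeneous base $G/\Gamma$. Unlike the base, the fiber dynamics is only $C^{1+\epsilon}$-smooth and carries no intrinsic group structure, so the fiberwise part of the renormalized separation has to be tracked through Pesin--Oseledets normal-form coordinates and controlled quantitatively via the Lyapunov data. This is precisely where $\chi\in\Sigma^{out}_3$ is used: $\dim E^\chi_F=1$ forces the fiber component of the drift to be one-dimensional and commensurate with the base component, while $E^{-\chi}_F=\{0\}$ ensures that the $G^{-\chi}$-error arising from the commutator leaves no fiberwise residue that could obstruct invariance. In the absence of either feature, the drift would leak into uncontrolled fiber directions and the limiting element could fail to preserve $\mu$; this is the structural reason the proposition is restricted to $\Sigma^{out}_3$.
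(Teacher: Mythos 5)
Your proposal takes a genuinely different route from the paper --- an exponential-drift argument in the style of Benoist--Quint / Eskin--Margulis--Mozes --- but as sketched it has gaps that I do not see how to close.

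First, the entry point of the drift fails. You propose to "pick two $\mu$-generic points $x,y$ related by a small nontrivial element of $G^\chi$, as supplied by the atomic structure of $\mu^{G^\chi}_x$ together with $H$-invariance." Atomicity supplies the opposite: the standard argument (used in the paper for $\mu^{\cW^\chi_F}$ at the start of Section 5 --- set $r(x)=\sup\{\sigma \mid \mu(B(x,\sigma)\setminus\{x\})=0\}$ and apply $A$-invariance plus Poincar\'e recurrence, since $A$ contracts $G^\chi$) shows that an atomic leafwise conditional along $G^\chi$ is a single Dirac mass at $x$. So under the contradiction hypothesis there are \emph{no} generic pairs separated by small nontrivial elements of $G^\chi$; one would have to start from pairs separated in transverse directions and argue that the renormalized displacement acquires a $G^\chi$-component, which is the genuinely hard "extra invariance" step, and it is exactly here that the non-linear, merely $C^{1+\epsilon}$ fiber directions make the homogeneous-space technology unavailable without substantial new input.

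Second, your claim that the $G^\chi$-separation and the fiber direction $E^\chi_F$ stretch "at the common rate $e^{n\chi(k)}$" is incorrect and inverts the actual mechanism. The functionals $\chi_F$ and $\chi_G$ are only positively proportional (both lie in the coarse class $[\chi]$), not equal. The paper's atomic case hinges on the Ledrappier--Young entropy comparison (Lemmas \ref{lem: local entropy leq} and \ref{lem: local entropy geq}): if $\mu^{G^\chi}_x$ were atomic then $h_\mu(f,\eta_0)=0$, forcing $\chi_F \geq 2\chi_G$, and this strict mismatch is what eventually produces the contradiction via Poincar\'e recurrence applied to the cocycle $c_x$ in the affine stabilizer $\bbA_x$. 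A matching rate would kill that argument rather than enable it.

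Third, your sketch does not engage with the dichotomy that organizes the paper's proof: whether the \emph{fiberwise} conditional $\mu^{\cW^\chi_F}_x$ is atomic. In the non-atomic fiber case the paper runs a time change and the $\pi$-partition trick of Kalinin--Katok--Rodriguez Hertz to get absolute continuity of $\mu^{\cW^\chi_F}_x$, then transfers non-atomicity to $\mu^{G^\chi}_x$ by holonomy; in the atomic fiber case it builds non-stationary normal forms, shows the conditional on $\cW^\chi$ is supported on the graph of a measurable function $S(x):\R^2\to\R$, and classifies the affine group preserving that graph. The decomposition $-\chi=\chi_1+\chi_2$ with $\chi_i\in\Sigma_Q$ is used in both cases to establish a $\sigma$-algebra inclusion of the form $\cE_{H_\chi}\subset[\cW^{-\chi}]$ (via absolutely continuous holonomies of the Haar directions $G^{\chi_1},G^{\chi_2}$), not to absorb commutator errors in a drift. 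Your instinct that $\dim E^\chi_F=1$ and $E^{-\chi}_F=\{0\}$ are the structural reasons for restricting to $\Sigma^{out}_3$ is correct, but the mechanism by which they enter is the one-dimensional normal form and the entropy inequality, not a rate-matching in a shearing argument.
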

The proof of Proposition \ref{main tech prop} is divided into two  parts which occupy the next two sectons.

\begin{proof}[Proof of Proposition \ref{prop: main}]
	Assume that $\mu$ is not $G$-invariant. We set
	\aryst
	Q = \{g \in G \mid g_* \mu = \mu \}.
	\earyst
	By hypothesis, $H \subset Q \subsetneq G$.
	
	Define  $E^{\chi}_F, E^{\chi}_G, E^\chi, \Sigma_Q, \Sigma^{out}_1, \cdots$with respect to $\mu$.
	We claim that
	\aryst
	|\Sigma \setminus \Sigma_{Q}| \leq \dim M \leq 2n-3.
	\earyst
	Indeed, if this was not the case, then there would exist $\chi \in \Sigma \setminus \Sigma_{Q}$ such that $\chi$ is fiberwise non-resonant, i.e., $E^\chi_F = \{0\}$. By \cite[Proposition 5.1]{BRHW2}, we would deduce that $\mu$ is in fact $G^\chi$-invariant. This would contradict the definitions of $Q$ and $\Sigma_{Q}$.
	
	By Lemma \ref{prop. para}, we see that $Q$ is a parabolic subgroup. 
	
	We set
	\aryst
	I := ( \Sigma^{out}_1 \cup \Sigma^{out}_2 ) \setminus \Sigma_Q.
	\earyst
	By definition, \eqref{lab: sigmasigmaotusigmaq} and \eqref{lab: sigmaout12}, it is clear that $I \subset \Sigma \setminus \Sigma_Q$ and
	\aryst
	|I|  &=& |I| + |\Sigma^{out}_1 \cup \Sigma^{out}_2|  + |\Sigma^{out}_3| - |\Sigma^{out}| \\
	&\leq& 2|\Sigma^{out}_1 \cup \Sigma^{out}_2|  + |\Sigma^{out}_3| - |\Sigma^{out}| \\
	&\leq& \dim M - |\Sigma^{out}|  \\
	&\leq& \dim M - |\Sigma| + |\Sigma_Q| \\
	& < & 2n-2 - |\Sigma| + |\Sigma_Q|.
	\earyst
	By Proposition \ref{prop. liegroup2}, there exists
	\aryst
	\chi \in \Sigma \setminus (\Sigma_Q \cup I) \subset \Sigma^{out} \setminus ( \Sigma_Q \cup I ) \subset \Sigma^{out}_3
	\earyst
	such that there exists $\chi' \in \Sigma_Q$ satisfying
	\ary\label{lab. noncommutative2}
	\chi'' := \chi + \chi' \in \Sigma \setminus \Sigma_Q.
	\eary
	In particular, $-\chi \in \Sigma^{non}_Q$.

	By Proposition \ref{main tech prop}, the conditional measure $\mu^{G^{\chi}}_{x}$ is non-atomic for $\mu$-a.e. $x$.  By the $Q$-invariance of $\mu$, we see that $\mu^{G^{\chi'}}_x$ is Haar for $\mu$-a.e. $x$. Then by the method in \cite{EK, EK2} for noncommuting foliations along with \eqref{lab. noncommutative2}, we see that $\mu^{G^{\chi''}}_x$ is Haar for $\mu$-a.e. $x$. But this is a contradiction as this would imply that $\mu$ is $G^{\chi''}$-invariant, and consequently $\chi'' \in \Sigma_Q$.
\end{proof}


\begin{proof}[Proof of Proposition \ref{thm small derivates}]
	Assume that $\a$ fails to have uniform subexponential growth of derivatives.
	 By Proposition \ref{prop. prop3.7inbfn1}, there is a $s \in A$ and an $H$-invariant $H$-ergodic  measure $\mu$ with
	$\lambda^F_{+}(s, \mu) > 0$, and  $\pi_*\mu$ is the  Haar measure on $G / \Gamma$. By Proposition \ref{prop: main}, we deduce that $\mu$ is $G$-invariant. We deduce that there exists a $\Gamma$-invariant measure $m$ on $M$.
	By Zimmer's cocycle superrigidity theorem, the $\Gamma$-action preserves a measurable metric on $M$. But in this case we should have $\lambda^F_{+}(s, \mu) = 0$. This is a contradiction.
	Thus $\a$ must has uniform subexponential growth of  derivatives.
\end{proof}

\begin{proof}[Proof of Theorem \ref{thm: main2} and \ref{thm: main3}:]
By Proposition \ref{thm small derivates}, we see that $\a$ has uniform subexponential growth of derivatives.
When $\a$ acts by $C^2$-diffeomorphisms, we conclude the proof of Theorem \ref{thm: main3} by the same argument in \cite{BFH1}.

By \cite[Theorem 2.9]{BFH1} and \cite[Proposition 7]{DZ}, we see that there exists a compact Lie group $K$; an injection $\iota : K \to {\rm Homeo}(M)$; and a group homomorphism $\phi : \Gamma \to K$ such that $\a = \iota\phi$. 

We  conclude the proof of Theorem \ref{thm: main2} by Margulis arithmetic theorem following \cite[Section 7]{BFH1}. Here we have used the fact that
\aryst
d(SL(n, \C)) &=& \begin{cases} 2n-2,  & n=3 \mbox{ or }n > 4, \\  5, & n = 4.  \end{cases} \\
v(SL(n, \C)) &=& 2n-2.
\earyst
\end{proof}

\smallskip
\

In the next two sections, we will give the proof of Proposition \ref{main tech prop}. We let $Q$ be a parabolic subgroup of $G$, and let $\mu$ be a $Q$-invariant $H$-ergodic measure; and let $\chi \in \Sigma^{out}_3  \cap (-\Sigma^{non}_Q)$. 
We also denote by $\chi_F \in \chi$ the Lyapunov functional for $E^{\chi}_F$, and denote by $\chi_G \in \chi$ the Lyapunov functional for $E^{\chi}_G$. 
To simply notation, we denote $E^{\chi}_F$ by $E$. By our hypothesis that $\chi \in \Sigma^{out}_3$, we have $\dim E = 1$.

\section{When $\mu^{\cW^\chi_F}$ is non-atomic}
\label{sec. nonatomic}

Through out this section, we assume that for $\mu$-a.e. $x$, the support of $\mu^{\cW^{\chi}_{F}}_x$ is non-discrete with respect to the leafwise metric.

\subsection{Time change and measurable Lyapunov foliation}
We fix a small constant $\varepsilon > 0$. 
 As in \cite[Section 5]{KKRH}, for any Lyapunov regular point $x \in M^{\a}$, for any $u,v \in E(x)$,  we define {\it the standard $\varepsilon$-Lyapunov scalar product} 
\aryst
\langle  u, v \rangle_{\varepsilon} = \int_{\liealga} \langle D\ta(s)u, D\ta(s)v \rangle  \exp(-2\chi(s) -2\varepsilon \norm{s})ds.
\earyst
For any $C > 0$, we define the Pesin set
$R(C)$ as in \cite[Proposition 2.2]{KKRH}. 
By \cite[Remark below Proposition 5.3]{KKRH}, we have
\ary \label{item: pesinssetandmap}
\ta(s) R(C) \subset R(e^{2\norm{s}\varepsilon}C), \quad \forall s \in \liealga.
\eary
We summarize the time change argument in \cite{KKRH} (more specifically, Proposition 6.2-6.7 in \cite{KKRH}) in the following two lemmata.
As in \cite{KKRH},
we fix an element $w \in \liealga$ such that $\chi(w) =1$.

\begin{lemma}\label{lem time change}
For $\mu$-a.e. $x \in M^{\a}$ and any $t \in \liealga$ there exists a real number $g(x,t)$ such that the function $\bg(x,t) = t + g(x,t)w$ satisfies the following property. The measurable map
\aryst
\tb(t,x) = \ta(\bg(x,t))x
\earyst
is an $\liealga$-action preserving a probability measure $\tilde{\mu}$ which is absolutely continuous with respect to $\mu$ with positive density,
and for any $t \in \liealga$ we have
\aryst
\norm{D \a(\bg(x,t))|_{E(x)}}_{\varepsilon} = e^{\chi(t)}.
\earyst
The function $g(x,t)$ is measurable and is continuous in $x$ on Pesin set and along the orbits of $\ta$. Moreover, $\bg(x,t)$ is $C^1$ in $t$ and it satisfies that 
\ary \label{item: g}
|g(x,t)| \leq 2\varepsilon \norm{t}, \quad |\partial_t g(x,t)|		 \leq \varepsilon.
\eary
\end{lemma}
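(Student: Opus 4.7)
The plan is to follow the time-change construction of Kalinin--Katok--Rodriguez Hertz \cite{KKRH} adapted to our fibered setting, exploiting that $E = E^{\chi}_{F}$ is one-dimensional. First I would encode the deviation from the exponential rate by the scalar cocycle
\[
\phi(x,t) \;=\; \log \norm{D\ta(t)\restriction E(x)}_{\varepsilon} - \chi(t).
\]
Using the integral representation of $\langle\cdot,\cdot\rangle_{\varepsilon}$, the change of variables $r \mapsto r-t$ yields
\[
\norm{D\ta(t)u}_{\varepsilon}^{2} \;=\; e^{2\chi(t)} \int_{\liealga} \norm{D\ta(r)u}^{2} \exp\bigl(-2\chi(r) - 2\varepsilon\norm{r-t}\bigr)\,dr,
\]
and the triangle inequality $\bigl|\,\norm{r-t}-\norm{r}\,\bigr|\le\norm{t}$ gives $|\phi(x,t)|\le\varepsilon\norm{t}$; differentiating the weight in $t$ in the same way produces $|\partial_{t}\phi(x,t)|\le C\varepsilon$ for a universal $C$. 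These are the only analytic inputs needed in what follows.

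Next, since $\chi(w)=1$, the condition $\norm{D\ta(\bg(x,t))\restriction E(x)}_{\varepsilon}=e^{\chi(t)}$ is equivalent to the fixed-point equation
\[
g \;=\; -\,\phi(x, t + g w).
\]
The right-hand side is Lipschitz in $g$ with constant bounded by $C\varepsilon\norm{w}$ by the derivative estimate above, so for $\varepsilon$ small the Banach contraction principle produces a unique solution $g(x,t)$, and the estimate $|g(x,t)|\le |\phi(x,t+gw)|\le\varepsilon(\norm{t}+|g|\norm{w})$ gives $|g(x,t)|\le 2\varepsilon\norm{t}$. Implicit differentiation in $t$ using $|\partial_{t}\phi|\le C\varepsilon$ yields $|\partial_{t}g(x,t)|\le\varepsilon$ after shrinking $\varepsilon$. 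Measurability in $x$ is inherited from $\phi$; continuity along $\ta$-orbits and on Pesin sets $R(C)$ follows because on such sets the Oseledets subbundle $E$, and hence the $\varepsilon$-Lyapunov metric on it, varies continuously (cf.\ \cite[Prop.~2.2]{KKRH} and \eqref{item: pesinssetandmap}), so $\phi$ is continuous on $R(C)\times\liealga$ and the implicit function theorem transports this to $g$.

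To verify that $\tb(t,x)=\ta(\bg(x,t))x$ is an $\liealga$-action, I would use the uniqueness in the fixed-point equation to establish the cocycle identity
\[
\bg(x,s+t) \;=\; \bg(x,t) + \bg\bigl(\tb(t,x),\,s\bigr).
\]
Indeed, both sides lie in $(s+t)+\R w$, and applying the chain rule together with the $D\ta(\bg(x,t))$-invariance of the Lyapunov bundle gives $\norm{D\ta(\bg(x,t)+\bg(\tb(t,x),s))\restriction E(x)}_{\varepsilon}=e^{\chi(s)}e^{\chi(t)}=e^{\chi(s+t)}$, so both sides solve the same implicit equation and must coincide. The action property of $\tb$ follows, and the cocycle identity is also what lets one transfer the $\ta$-invariant measure $\mu$ to a $\tb$-invariant probability measure $\tilde\mu$ absolutely continuous with respect to $\mu$: since $\bg(x,t)-t\in\R w$, the map $\tb(t,\cdot)$ preserves each $\ta$-orbit and reparametrizes the $w$-direction only, with Jacobian $1+\partial_{w}g(\ta(\cdot)x,t)$. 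Setting the density $\rho(x)$ to be the solution (in the sense of \cite[Prop.~6.7]{KKRH}) of the coboundary equation dictated by this Jacobian makes $\tilde\mu=\rho\,\mu$ a probability measure invariant under $\tb$; the bound $|\partial_{t}g|\le\varepsilon$ guarantees that $\rho$ is bounded above and away from zero.

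The main obstacle in carrying this out cleanly is the construction of the invariant density $\rho$ and the verification that $\tilde\mu$ is a finite measure with positive density. This requires a careful accounting of the Jacobian of the reparametrization along the $w$-flow, and is really the technical heart of the Kalinin--Katok--Rodriguez Hertz argument; all other steps (solving the implicit equation, cocycle identity, regularity on Pesin sets) are fairly formal once the bound $|\phi(x,t)|\le\varepsilon\norm{t}$ on the Lyapunov cocycle is established.
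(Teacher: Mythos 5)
Your proposal is correct and follows essentially the same route as the paper, which gives no independent proof of this lemma but simply cites Propositions 6.2--6.7 of \cite{KKRH}; your reconstruction (the cocycle $\phi$, the fixed-point equation $g=-\phi(x,t+gw)$ using $\chi(w)=1$, the cocycle identity via uniqueness and multiplicativity of norms on the one-dimensional bundle $E$, and the time-change density from the $w$-direction Jacobian) is precisely the argument of that source. The only cosmetic caveat is that implicit differentiation gives $|\partial_t g|\le\varepsilon/(1-\varepsilon\norm{w})$ rather than $\varepsilon$ on the nose, but this matches the level of precision of the cited statement and does not affect anything downstream.
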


\begin{lemma}\label{lem fake stable manifold}
For any $s \in \liealga$ there is a stable \lq\lq foliation\rq\rq $\tcW^{-}_{\tb(s)}$ which is contracted by $\tb(s)$ and invariant under the new action $\tb$. It consists of \lq\lq leaves\rq\rq  $\tcW^{-}_{\tb(s)}(x)$ defined for $\mu$-a.e. $x$. The \lq\lq leaf \rq\rq $\tcW^-_{\tb(s)}(x)$  is a measurable subset of the leaf $\ta(\R w)\cW^{-}_{\ta(s)}(x)$ of the form
\aryst
\tcW^{-}_{\tb(s)}(x) = \{  \ta(\varphi^{s}_x(y)w)y \mid y \in \cW^{-}_{\ta(s)}(x) \}
\earyst
where $\varphi^{s}_x: \cW^{-}_{\ta(s)}(x) \to \R$ is a $\mu^{\cW^{-}_{\ta(s)}}_x$-almost everywhere defined measurable function. For $x$ in a Pesin set, $\varphi^s_x$ is H\"older continuous on the intersection of this Pesin set with any ball of fixed radius on $\cW^{-}_{\ta(s)}(x)$ with H\"older exponent $\gamma$ and H\"older constant which depends on the Pesin set and radius.
\end{lemma}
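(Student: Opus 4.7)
The plan is to construct the leaves $\tcW^{-}_{\tb(s)}(x)$ as a graph over $\cW^{-}_{\ta(s)}(x)$ in the $\ta(\R w)$-direction, following the scheme of Propositions 6.2--6.7 in \cite{KKRH}. The key observation is that since the $\liealga$-action $\ta$ is abelian, Lemma \ref{lem time change} gives a cocycle relation of the form $\tb(s) = \ta(s + g(\cdot,s) w)$, so $\tb(s)$ differs from $\ta(s)$ only by a time change along the one-parameter subgroup $\ta(\R w)$. Consequently, a point asymptotic to $x$ under the original flow can be "re-phased" in the $w$-direction to become asymptotic under the new flow. The natural candidate for the leaf is therefore
\[
\tcW^{-}_{\tb(s)}(x) \;=\; \{\, \ta(\varphi^{s}_x(y) w)\, y \mid y \in \cW^{-}_{\ta(s)}(x)\,\},
\]
for a measurable function $\varphi^s_x$ to be determined.

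To construct $\varphi^s_x$, I would iterate the cocycle relation for $g$: for each $n \geq 0$, $\tb(ns)(y) = \ta(S_n(y)w)\,\ta(ns)(y)$ where $S_n(y) = \sum_{k=0}^{n-1} g(\tb(ks)y, s)$. The condition that $\ta(\varphi w) y$ be forward-asymptotic to $x$ under $\tb(s)$ forces the $w$-component of the discrepancy to vanish in the limit, which formally determines
\[
\varphi^s_x(y) \;=\; -\sum_{n=0}^{\infty} \bigl(g(\tb(ns)x, s) - g(\tb(ns)y, s)\bigr).
\]
On a Pesin set $R(C)$, the map $g(\cdot, s)$ is H\"older continuous (from Lemma \ref{lem time change}) and the points $\tb(ks)x, \tb(ks)y$ remain in a slowly-expanding Pesin set (by \eqref{item: pesinssetandmap}) while their leafwise distance decays exponentially in $k$. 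Combining the H\"older bound on $g$ with this exponential decay gives absolute convergence of the series, defines $\varphi^s_x(y)$ almost everywhere, and shows that $\varphi^s_x$ is H\"older on the intersection of a bounded piece of $\cW^{-}_{\ta(s)}(x)$ with any Pesin set, with constants depending on the Pesin set and the radius.

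The remaining properties follow formally from the construction. The contraction of $\tb(s)$ on $\tcW^{-}_{\tb(s)}(x)$ is inherited from the contraction of $\ta(s)$ on $\cW^{-}_{\ta(s)}(x)$ together with the uniform bound $|g(\cdot,s)| \leq 2\varepsilon \|s\|$ on the correction. The $\tb$-invariance of the family $\{\tcW^{-}_{\tb(s)}\}$ is obtained by unwinding the telescoping definition of $\varphi^s_x$ and using commutativity of the $\ta$-action with $\ta(\R w)$: applying $\tb(t)$ shifts the summation index and produces the corresponding $\varphi^s_{\tb(t)x}$.

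The principal technical obstacle is the joint control of two competing estimates: the H\"older exponent of $g$ along Pesin sets is only $\gamma < 1$ and its constant may blow up off of $R(C)$, while the stable contraction rate is tempered by the $e^{2\norm{s}\varepsilon}$ factor in \eqref{item: pesinssetandmap}. One must take $\varepsilon$ small enough compared to $\chi_F$ so that $\gamma$ times the effective stable rate still dominates the Pesin set distortion, which is precisely the trade-off carried out in \cite[Propositions~6.2--6.7]{KKRH}. Accordingly, the proof reduces to verifying that our $\varepsilon$-Lyapunov scalar product, the estimates \eqref{item: g}, and the Pesin set inclusion \eqref{item: pesinssetandmap} satisfy the hypotheses of that argument, and then invoking it verbatim.
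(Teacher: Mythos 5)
Your proposal is correct and follows essentially the same route as the paper, which does not prove this lemma but simply imports it from \cite[Propositions 6.2--6.7]{KKRH}; your graph-over-$\cW^{-}_{\ta(s)}(x)$ construction with the telescoping cocycle series for $\varphi^s_x$ and the H\"older-versus-tempered-contraction trade-off is precisely the argument being cited. The only caveat is that your closed-form series for $\varphi^s_x(y)$ should really be set up as a fixed-point/implicit equation (the correction at time $n$ is evaluated along the orbit of the re-phased point $z$, not of $y$), but this is exactly how \cite{KKRH} handles it and does not change the approach.
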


We have the following observation.
\begin{lemma}\label{lem. worbitboundeddistortion}
For $\mu$-a.e. $x$, for any $t \in \R$, for any $k \in \liealga$, we have $\tb(k)\ta(tw)x = \ta(sw) \tb(k)x$ where $s \in \R$ satisfies
\aryst
 |t|/4 < |s| < |t|.
\earyst
\end{lemma}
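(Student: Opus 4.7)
The plan is to reduce the claim to a computation inside the abelian group $\liealga$ acting on $M^\a$ via $\ta$. Using $\tb(k)y=\ta(\bg(y,k))y$ with $\bg(y,k)=k+g(y,k)w$ together with $\ta(u)\ta(v)=\ta(u+v)$ for $u,v\in\liealga$, I would expand both sides of the sought equality as $\ta(\cdot)x$:
\[ \tb(k)\ta(tw)x=\ta\bigl(k+(t+g(\ta(tw)x,k))w\bigr)x,\qquad \ta(sw)\tb(k)x=\ta\bigl(k+(s+g(x,k))w\bigr)x. \]
Equating them on the full-measure set where the $\ta$-stabilizer is trivial forces
\[ s=t+g(\ta(tw)x,k)-g(x,k), \]
so the lemma reduces to bounding this difference of $g$'s by a small multiple of $|t|$.

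To exploit the cocycle structure I would first replace $\ta(tw)x$ by a point on the $\tb$-orbit of $x$. The map $\phi(\tau):=\tau+g(x,\tau w):\R\to\R$ is $C^1$ with $|\phi'(\tau)-1|\le\varepsilon\norm{w}$ by Lemma \ref{lem time change}, so after shrinking $\varepsilon$ it becomes a diffeomorphism. The unique $\tau$ solving $\phi(\tau)=t$ then satisfies $\ta(tw)x=\tb(\tau w)x$ and $|\tau-t|\le 2\varepsilon\norm{w}|\tau|$, hence $|\tau|$ and $|t|$ are comparable up to a fixed factor. Next, the cocycle identity for $\bg$ over the abelian $\tb$-action,
\[ \bg(x,k+\tau w)=\bg(x,k)+\bg(\tb(k)x,\tau w)=\bg(x,\tau w)+\bg(\tb(\tau w)x,k), \]
obtained from $\tb(k+\tau w)=\tb(k)\tb(\tau w)=\tb(\tau w)\tb(k)$, gives by reading off the $w$-component the exchange formula
\[ g(\tb(\tau w)x,k)-g(x,k)=g(\tb(k)x,\tau w)-g(x,\tau w). \]
Each term on the right is bounded by $2\varepsilon\norm{w}|\tau|$ by Lemma \ref{lem time change}, and since $\ta(tw)x=\tb(\tau w)x$ this bounds $|g(\ta(tw)x,k)-g(x,k)|$ by $4\varepsilon\norm{w}|\tau|$. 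Combined with the comparability of $|\tau|$ and $|t|$, this gives $|s-t|\le C\varepsilon|t|$ for a fixed $C$, and the stated bound $|t|/4<|s|<|t|$ follows by taking $\varepsilon$ sufficiently small at the outset.

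The main obstacle is the measure-theoretic bookkeeping: $\bg$ is only defined $\mu$-a.e., and the argument above evaluates it at the four points $x,\tb(k)x,\tb(\tau w)x,\ta(tw)x$. The continuity of $g$ on Pesin sets and along $\ta$-orbits asserted in Lemma \ref{lem time change}, together with the $\tb$-invariance of the equivalent measure $\tilde\mu$, allows me to intersect countably many conull sets and run the identity on a $\tb$-invariant full-measure set through which the $\ta$-orbit of $x$ passes; this is the step that requires care, but it is standard once the continuity properties provided by Lemma \ref{lem time change} are invoked.
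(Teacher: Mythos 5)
Your proposal is correct and follows essentially the same route as the paper: both write $\ta(tw)x=\tb(\tau w)x$ with $\tau=\phi_x^{-1}(t)$ for the near-identity diffeomorphism $\phi_y(\tau)=\tau+g(y,\tau w)$, commute $\tb(k)$ past $\tb(\tau w)$, and read off $s=\phi_{\tb(k)x}(\phi_x^{-1}(t))$, which is exactly your identity $s=t+g(\ta(tw)x,k)-g(x,k)$ after applying your exchange formula. Note only that, just as in the paper's own argument, one really obtains two-sided comparability $|s|\asymp|t|$ up to a factor $1+O(\varepsilon)$ (or $4$), not the literal strict upper bound $|s|<|t|$ in the statement, but that is all that is used later.
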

\begin{proof}
	For $\mu$-a.e. $y$, we define function $\phi_y : \R \to \R$ by
	\aryst
	\phi_y(t) = t + g(y,tw), \quad \forall t \in \R.
	\earyst
	Then it is clear that for $\mu$-a.e. $y$,
	\aryst
	\tb(tw)y = \ta(\phi_y(t)w)y.
	\earyst
	By \eqref{item: g}, we see that for $\mu$-a.e. $y$, $\phi_y$ is a diffeomorphism of $\R$ with $\norm{\phi_y}, \norm{\phi_y^{-1}} < 2$.
	
	By our choices of $s,t$, we have
	\aryst
	\tb(k) \ta(tw) x = \tb(k) \tb(\phi_x^{-1}(t) w)x
	= \ta(sw) \tb(k)x 
	= \tb(\phi_{\tb(k)x}^{-1}(s)w) \tb(k)x.
	\earyst
	Consequently, we have $s = \phi_{\tb(k)x}\phi_x^{-1}(t)$. This concludes the proof.
\end{proof}

In \cite[Corollary 6.8]{KKRH}, the authors gave the existence of coarse Lyapunov foliations for $\tb$.
In the following, we give a detailed account.
\begin{lemma}\label{lem fake coarse leaf}
For any $\chi' \in \Sigma$, 
there exists an essentially unique\footnote{We say that two collections $\{\varphi_x : \cW^{\chi'}(x) \to \R \}_{x \in M^\a}$ and  $\{\phi_x : \cW^{\chi'}(x) \to \R \}_{x \in M^\a}$ are equivalent if for $\mu$-a.e. $x$, for $\mu^{\cW^{\chi'}}_x$-a.e. $y$, we have $\varphi_x(y) = \phi_x(y)$.} collection $\varphi^{\chi'} = \{\varphi^{\chi'}_x : \cW^{\chi'}(x) \to \R \}_{x \in M^\a}$ where for $\mu$-a.e. $x$, $\varphi^{\chi'}_x$ is a $\mu^{\cW^{\chi'}}_x$-a.e. defined function and H\"older continuous on Pesin sets, such that the following is true:
for any $k \in \liealga$, for $\mu$-a.e. $x$, for $\mu^{\cW^{\chi'}}_x$-a.e. $y$, set $z= \ta(\varphi^{\chi'}_x(y)w)y$, we have
\aryst
\tb(k,z) = \ta(\varphi^{\chi'}_{\tb(k,x)}(\ta(\bg(x, k))y)w)\ta(\bg(x, k))y.
\earyst
We have a similar collection of measurable functions for $\tcW^{\chi}_F$.
\end{lemma}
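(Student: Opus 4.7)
The plan is to build $\varphi^{\chi'}_x$ as the common restriction to $\cW^{\chi'}(x)$ of the fake-stable functions $\varphi^{s}_x$ supplied by Lemma \ref{lem fake stable manifold}, for various $s\in\liealga$ with $\chi'(s)<0$. One writes $\cW^{\chi'}(x)$ as a finite intersection $\cW^-_{\ta(s_1)}(x)\cap\cdots\cap\cW^-_{\ta(s_m)}(x)$, where each $\chi'(s_j)<0$ and the family $\{s_j\}$ is chosen generically enough to separate out every coarse root not proportional to $\chi'$. For each $j$, Lemma \ref{lem fake stable manifold} already provides $\varphi^{s_j}_x$, which is $\mu^{\cW^-_{\ta(s_j)}}_x$-a.e.\ defined on $\cW^-_{\ta(s_j)}(x)$, and hence $\mu^{\cW^{\chi'}}_x$-a.e.\ defined on $\cW^{\chi'}(x)$.

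The heart of the argument is to show that for any two indices $j,k$ one has $\varphi^{s_j}_x=\varphi^{s_k}_x$ almost everywhere on $\cW^{\chi'}(x)$. Setting $s_{j,k}:=s_j+s_k$, one has $\chi'(s_{j,k})<0$ and $\cW^{\chi'}(x)\subset\cW^-_{\ta(s_{j,k})}(x)$. The two candidate graphs $\{\ta(\varphi^{s_j}_x(y)w)y\}$ and $\{\ta(\varphi^{s_k}_x(y)w)y\}$ restricted to $y\in\cW^{\chi'}(x)$ are both $\tb$-invariant sections that are leafwise contracted by $\tb(s_{j,k})$. By the uniqueness built into Lemma \ref{lem fake stable manifold}, both must coincide a.e.\ with the restriction of $\varphi^{s_{j,k}}_x$, and in particular with each other. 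The requisite control on the $w$-component of iterates is furnished by Lemma \ref{lem. worbitboundeddistortion}, which guarantees that successive applications of $\tb(s_{j,k})$ only change the $\R w$-offset by a bounded factor, so the leafwise contraction drives convergence.

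With $\varphi^{\chi'}_x$ unambiguously defined, the claimed equivariance identity is a direct consequence of the $\tb$-invariance of $\tcW^{-}_{\tb(s_1)}$ recorded in Lemma \ref{lem fake stable manifold}: apply that lemma at $y$, unwind the time-change $\bg(x,k)=k+g(x,k)w$, and use Lemma \ref{lem. worbitboundeddistortion} once more to commute $\ta(\R w)$ past $\tb(k)$. The H\"older regularity on Pesin sets is inherited directly from Lemma \ref{lem fake stable manifold}. The analogue for $\tcW^{\chi}_F$ is proved verbatim by replacing each $\cW^-_{\ta(s)}$ and the associated $\varphi^s$ by their fiberwise counterparts throughout the argument.

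The anticipated main obstacle is the uniqueness step. The restriction of $\varphi^{s_j}_x$ to $\cW^{\chi'}(x)$ is only a measurable section, and showing that it is contracted by $\tb(s_{j,k})$ in the quantitative sense required by Lemma \ref{lem fake stable manifold} relies on the $\R w$-drift $g(\cdot,t)$, with its bound $|g(x,t)|\le 2\varepsilon\|t\|$, being negligible compared with the contraction rate $|\chi'(s_{j,k})|$. This forces one to fix $\varepsilon$ small from the outset, after which the estimate closes and the a.e.\ equality follows.
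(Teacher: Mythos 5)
Your proposal follows essentially the same route as the paper: define $\varphi^{\chi'}_x$ as the restriction of a fake-stable function $\varphi^{s}_x$ (for $s$ with $\chi'(s)<0$ and $\varepsilon$ small relative to $|\chi'(s)|$), prove independence of $s$ by playing forward contraction against the bounded distortion of the $\R w$-offset from Lemma \ref{lem. worbitboundeddistortion}, and deduce the equivariance from the $\tb$-invariance of $\tcW^-_{\tb(s)}$. The only cosmetic difference is that the paper does not invoke an abstract uniqueness of the fake stable foliation (which Lemma \ref{lem fake stable manifold} does not literally assert) but runs the contraction-versus-drift estimate directly on the two candidate offsets, which is exactly the argument you sketch in your final paragraph.
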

\begin{proof}
We claim that for any $s,k \in \liealga$, for $\mu$-a.e. $x$, for $\mu^{\cW^-_{\ta(s)}}_x$-a.e. $y$, set $z = \ta(\varphi^s_x(y)w)y$, we have
\ary \label{item: claim1}
\tb(k, z) =  \ta(\varphi^s_{\tb(k,x)}(\ta(\bg(x, k))y)w)\ta(\bg(x, k))y.
\eary
To prove the claim, we first notice that by definition we have
\aryst
\tb(k,z) &=& \ta( \bg(z,k))z \\
&=& \ta( \bg(z,k))\ta(\varphi^s_x(y)w)y \\
&=& \ta( \bg(z,k))\ta(\varphi^s_x(y)w) \ta(-\bg(x,k))  \ta(\bg(x,k))y \\
&=& \ta( (g(z,k)-g(x,k) + \varphi^s_x(y))w ) \ta(\bg(x,k))y \\
&=& \ta(t'w) y'
\earyst
where 
\aryst
t' = g(z,k)-g(x,k) + \varphi^s_x(y), \quad 
y' = \ta(\bg(x,k))y.
\earyst
Notice that we have $y' \in \cW^-_{\ta(s)}(\tb(k,x))$.

By Lemma \ref{lem fake stable manifold} and the fact that $\tcW^-_{\tb(s)}$ is $\tb(\liealga)$-invariant, we see that $z \in \tcW^{-}_{\tb(s)}(x)$ and $\tb(k,z) \in \tcW^-_{\tb(s)}(\tb(k,x))$. Thus there exists $y'' \in \cW^-_{\ta(s)}(\tb(k,x))$ such that 
\aryst
\tb(k,z) = \ta(\varphi^s_{\tb(k,x)}(y'')w)y''.
\earyst
Consequently, $y'' \in \cW^{-}_{\ta(s)}(y')$, and  there exists $t'' \in \R$ such that 
\aryst
y'' = \ta(t'' w) y'.
\earyst
Since we have $d(\ta(ns)y', \ta(ns)y'') \to 0$ as $n$ tends to infinity, we can show that $t'' = 0$ for a $\mu$-typical $y$. Consequently, $y'' = y'$. This proves our claim.

Fix an arbitrary $s \in \liealga$ such that 
\ary \label{item: chi's}
\chi'(s) < -10\norm{s} \varepsilon.
\eary
Then by Lemma \ref{lem fake stable manifold}, for $\mu$-a.e. $x$, function $\varphi^s_x$ is defined $\mu^{\cW^-_{\ta(s)}}_x$ almost everywhere. Thus for $\mu^{\cW^-_{\ta(s)}}_x$-a.e. $y$, $\varphi^s_x(z)$ is defined for $\mu^{\cW^{\chi'}}_y$-a.e. $z$. We define $\varphi^{\chi'}_x$ to be the restriction of $\varphi^s_x$ to $\cW^{\chi'}(x)$ for $\mu$-a.e. $x$. In the following we abbreviate $\varphi^{\chi}_x$ as $\varphi_x$.

We also show that $\varphi_x$ defined above is essentially independent of the choice of $s$. Take another $s' \in \liealga$ with $\chi'(s') < 0$. Assume that there exists a set $\Omega \subset M^\a$ with $\mu(\Omega) > 0$ such that for every $x \in \Omega$, there exists a subset  $\Omega_x \subset \cW^{\chi'}(x)$ with  positive $\mu^{\cW^{\chi'}}_x$ measure such that for every $y \in \Omega_x$, $\varphi_x(y) = t''w + \varphi^{s'}_x(y)$ for some $t'' \neq 0$. On the other hand, by \eqref{item: pesinssetandmap}, \eqref{item: chi's}, \eqref{item: claim1} and by the H\"older continuity of $\varphi^{s'}_x$, $\varphi_x$ on Pesin sets,  we see that for typical choices of $x,y$, we have
\aryst
d(\tb(ns)y', \tb(ns)y'') \to 0 \mbox{ as } n \to \infty
\earyst
where $y'=\ta(\varphi_x(y)w)y, y''= \ta(\varphi^{s'}_x(y)w)y$.
By Lemma \ref{lem. worbitboundeddistortion}, this contradicts $t'' \neq 0$. Consequently, we see that the definition of $\varphi_x$ is independent of the choice of $s$. This concludes the proof.
\end{proof}
From the proof of Lemma \ref{lem fake coarse leaf}, we can deduce the following.
\begin{cor}\label{cor: graph coincides}
For any $\chi' \in \Sigma$, for any $s \in \liealga$ such that $\chi'(s) < 0$, for $\mu$-a.e. $x$, for $\mu^{\cW^{\chi'}}_x$-a.e. $y$, we have
\aryst
\varphi^{s}_x(y) = \varphi^{\chi'}_x(y) 
\earyst
where $\varphi^{s}_x$ is given by Lemma \ref{lem fake stable manifold}, and $\varphi^{\chi'}_x$ is given by Lemma \ref{lem fake coarse leaf}.
\end{cor}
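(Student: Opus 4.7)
The plan is to observe that Corollary \ref{cor: graph coincides} is essentially an extraction of the independence-of-$s$ argument already carried out in the second half of the proof of Lemma \ref{lem fake coarse leaf}. I fix $s \in \liealga$ with $\chi'(s) < 0$ and recall that $\varphi^{\chi'}_x$ was defined there as the restriction of $\varphi^{s_0}_x$ to $\cW^{\chi'}(x)$ for a strong element $s_0$ satisfying $\chi'(s_0) < -10\norm{s_0}\varepsilon$. Since $\chi'(s) < 0$ forces $\cW^{\chi'}(x) \subset \cW^-_{\ta(s)}(x)$, the function $\varphi^s_x$ is defined $\mu^{\cW^{\chi'}}_x$-a.e.\ on $\cW^{\chi'}(x)$, so the identity $\varphi^s_x = \varphi^{\chi'}_x$ makes sense; the task reduces to proving this equality $\mu^{\cW^{\chi'}}_x$-a.e.\ for $\mu$-a.e.\ $x$.

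I will argue by contradiction. Suppose that on a set of positive $\mu$-measure of $x$ there is a subset of $\cW^{\chi'}(x)$ of positive $\mu^{\cW^{\chi'}}_x$-measure on which $t := \varphi^s_x(y) - \varphi^{\chi'}_x(y) \neq 0$. Set $y' = \ta(\varphi^{\chi'}_x(y) w)y \in \tcW^-_{\tb(s_0)}(x)$ and $y'' = \ta(\varphi^s_x(y) w)y = \ta(tw)y' \in \tcW^-_{\tb(s)}(x)$; the two points are the canonical $w$-lifts of $y$ into the respective fake stable leaves through $x$. The next step is to iterate under $\tb(n s_0)$ and combine \eqref{item: pesinssetandmap}, \eqref{item: chi's}, the equivariance relation \eqref{item: claim1}, and the H\"older continuity of $\varphi^s_x$ and $\varphi^{s_0}_x$ on Pesin sets --- exactly as in the last paragraph of the proof of Lemma \ref{lem fake coarse leaf} --- to conclude that $d(\tb(ns_0) y', \tb(ns_0) y'') \to 0$ for $\mu$-typical $(x,y)$. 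On the other hand, Lemma \ref{lem. worbitboundeddistortion} yields $\tb(ns_0)y'' = \ta(s_n w) \tb(ns_0) y'$ with $|s_n| \geq |t|/4 > 0$, so the $w$-displacement between the two iterates remains bounded away from zero. This contradiction forces $\varphi^s_x = \varphi^{\chi'}_x$ almost everywhere on $\cW^{\chi'}(x)$.

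The only mild subtlety, which is not really an obstacle, is to verify that the contraction-plus-H\"older argument used in the proof of Lemma \ref{lem fake coarse leaf} still applies when the ``weaker'' side is governed by an $s$ with merely $\chi'(s) < 0$: the contraction is always driven by $\tb(n s_0)$ and is therefore insensitive to how close $\chi'(s)$ is to $0$, and the role of the hypothesis $\chi'(s) < 0$ is solely to place $\cW^{\chi'}(x)$ inside the domain $\cW^-_{\ta(s)}(x)$ of $\varphi^s_x$, which is exactly what is needed for the comparison to make sense.
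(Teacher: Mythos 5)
Your proposal is correct and is essentially the paper's own argument: the paper proves this corollary precisely by pointing back to the independence-of-$s$ step in the proof of Lemma \ref{lem fake coarse leaf}, which is the same contradiction you run (contraction of $d(\tb(ns_0)y',\tb(ns_0)y'')$ versus the lower bound on the $w$-displacement from Lemma \ref{lem. worbitboundeddistortion}). Your closing remark correctly identifies the only point needing care, namely that the contraction is driven by the strong element $s_0$ while $\chi'(s)<0$ only ensures $\varphi^s_x$ is defined on $\cW^{\chi'}(x)$.
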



By Lemma \ref{lem fake coarse leaf} and Corollary \ref{cor: graph coincides}, we can define for each $\chi' \in \Sigma$ a collection of $\tb(\liealga)$-invariant sets $\tcW^{\chi'}$ by setting
\aryst
\tcW^{\chi'}(x) = \{ \ta(\varphi_x(y)w)y \mid y \in \cW^{\chi'}(x) \}
\earyst 
where $\varphi_x$ is given by Lemma \ref{lem fake coarse leaf} associated to $\chi'$. Similarly, we can define $\tcW^{+}_{\tb(a)}$, $\tcW^{\chi'}_F$ and $\tcW^{\chi'}_G$.

We have the following useful lemma.
\begin{lemma}\label{cor: conditional measure fake leaf}
For any $\chi' \in \Sigma$, for $\mu$-a.e. $x$, the conditional measure
$\tilde{\mu}^{\tcW^{\chi'}}_x$  is absolutely continuous with respect to $(y \mapsto \ta(\varphi_x(y)w)y)_* \mu^{\cW^{\chi'}}_x$. We have analogous statements for $\cW^{\chi'}_F$, $\cW^{\chi'}_G$ and $\cW^-_{\ta(a)}$, $a \in \liealga$.
\end{lemma}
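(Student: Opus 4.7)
The plan is to relate the two conditional measures on $\tcW^{\chi'}(x)$ by constructing compatible measurable partitions for $\cW^{\chi'}$ and $\tcW^{\chi'}$, and exploiting both the positive density of $\tilde{\mu}$ with respect to $\mu$ and the graph structure $\Phi_x(y) = \ta(\varphi_x(y)w)y$ of fake leaves over original leaves.

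First, I would fix a measurable partition $\xi$ subordinate to $\cW^{\chi'}$, with each atom $\xi(x)$ a precompact open subset of $\cW^{\chi'}(x)$ in the leafwise topology, as in the construction of $[\mu^{\cW^\chi}_x]$ in Subsection \ref{subsec: Conditional measure}. Using the essentially unique graphing functions $\{\varphi_x\}$ from Lemma \ref{lem fake coarse leaf}, I define a partition $\tilde{\xi}$ with atoms $\tilde{\xi}(x) := \Phi_x(\xi(x)) \subset \tcW^{\chi'}(x)$. The H\"older regularity of $\varphi_x$ on Pesin sets, together with the compatibility of $\varphi_x$ across basepoints coming from its essential uniqueness, make $\tilde{\xi}$ a well-defined measurable partition subordinate to $\tcW^{\chi'}$, with each restriction $\Phi_x : \xi(x) \to \tilde{\xi}(x)$ a bi-measurable bijection.

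Second, I would apply the identity $\tilde{\mu} = \rho\, \mu$ with $\rho > 0$ from Lemma \ref{lem time change} to disintegrate along $\tilde{\xi}$ and obtain
\begin{equation*}
\tilde{\mu}^{\tilde{\xi}}_x = \frac{\rho}{\int \rho\, d\mu^{\tilde{\xi}}_x}\, \mu^{\tilde{\xi}}_x,
\end{equation*}
so $\tilde{\mu}^{\tilde{\xi}}_x$ and $\mu^{\tilde{\xi}}_x$ are mutually absolutely continuous. Passing from the subordinate partition $\tilde{\xi}$ to the leafwise measure $\tilde{\mu}^{\tcW^{\chi'}}_x$ exactly as in Subsection \ref{subsec: Conditional measure}, it suffices to prove $\mu^{\tilde{\xi}}_x \ll (\Phi_x)_* \mu^{\xi}_x$.

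For the key comparison $\mu^{\tilde{\xi}}_x \ll (\Phi_x)_* \mu^{\xi}_x$, I would use the local product structure around a $\mu$-typical point $x$. Since $w \in \liealga$ and $\mu$ is $\liealga$-invariant, $\mu$ is in particular $\ta(\R w)$-invariant, so a small neighborhood of $x$ admits a measurable product decomposition with one coordinate along the $\ta(\R w)$-orbit. The atoms $\xi(x)$ and $\tilde{\xi}(x)$ project to the same transverse set, and on each $\ta(\R w)$-orbit they are related by the measurable shift of size $\varphi_x(y)$, which is bounded by $2\varepsilon \norm{s}$ (cf.~Lemma \ref{lem time change}). Since the marginal of $\mu$ along each $\ta(\R w)$-orbit is absolutely continuous with respect to Lebesgue, this shift transports null sets of $(\Phi_x)_*\mu^{\xi}_x$ to null sets of $\mu^{\tilde{\xi}}_x$, giving the desired absolute continuity. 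The main obstacle is carrying out this product-structure argument rigorously: $\varphi_x$ is only H\"older on Pesin sets, so the decomposition must be done Pesin-set by Pesin-set and then assembled by countable additivity, and one must verify measurability of the basepoint selection. The analogous statements for $\cW^{\chi'}_F$, $\cW^{\chi'}_G$ and $\cW^{-}_{\ta(a)}$ follow by the same argument with $\cW^{\chi'}$ replaced throughout.
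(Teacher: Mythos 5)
The paper offers no argument of its own here --- it simply points to the last paragraph of \cite[Lemma 7.1]{KKRH} --- and your reconstruction follows the same overall strategy as that reference: exploit the positive density of $\tilde\mu$ relative to $\mu$, and then compare conditional measures on the fake leaf and the true leaf through the $\ta(\R w)$-direction. Two points, however, need repair. First, the family $\tilde\xi(x):=\Phi_x(\xi(x))$ with $\Phi_x(y)=\ta(\varphi_x(y)w)y$ is not a partition as written: for $z\in\Phi_x(\xi(x))$ one has $\tilde\xi(z)=\Phi_z(\xi(z))$ with $\xi(z)\subset\cW^{\chi'}(z)\neq\cW^{\chi'}(x)$, and there is no reason for $\Phi_z(\xi(z))$ to coincide with $\Phi_x(\xi(x))$. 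This is fixable (work inside the local product box $\ta((-\delta,\delta)w)\xi(x)$ and partition it into its intersections with fake leaves), but as stated the object you disintegrate against is not well defined.

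The more serious issue is the justification of the decisive step $\mu^{\tilde\xi}_x \ll (\Phi_x)_*\mu^{\xi}_x$. You derive it from ``the marginal of $\mu$ along each $\ta(\R w)$-orbit is absolutely continuous with respect to Lebesgue, so the shift transports null sets.'' This cannot be the mechanism: each $\ta(\R w)$-orbit meets $\xi(x)$ and $\tilde\xi(x)$ in at most one point, so orbit-wise marginals carry no information relating the two transverse conditionals, and both sets are null for $\mu$ anyway. What is actually needed is the statement that the conditional measure of $\mu$ on the two-parameter set $\ta(\R w)\cW^{\chi'}(x)$ is, in the coordinates $(t,y)\mapsto\ta(tw)y$, equivalent to a product $Leb\times\mu^{\cW^{\chi'}}_x$ (this is the content of Remark \ref{rema.productmeasure}, and it itself requires an argument beyond mere $\ta(\R w)$-invariance: one disintegrates the joint-leaf conditional first over $\cW^{\chi'}$-plaques and then over orbits, using \eqref{item: invarianceconditionalmeasure} for the flow $\ta(tw)$). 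Granting the product structure, the shear $(t,y)\mapsto(t+\varphi_x(y),y)$ preserves the product measure class and carries the horizontal slices $\{t\}\times\cW^{\chi'}(x)$ to the fake leaves $\ta(tw)\tcW^{\chi'}(x)$; by uniqueness of disintegration, the conditional of $\mu$ on a fake leaf is therefore the push-forward under $\Phi_x$ of the conditional on a true leaf, which is the desired conclusion. You name the product decomposition in passing, but your proof as written rests the key step on the wrong fact; without the Fubini/shear argument on the joint leaf the absolute continuity does not follow.
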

\begin{proof}
This is proved in the last paragraph of \cite[Lemma 7.1]{KKRH}.
\end{proof}

\begin{rema} \label{rema.productmeasure}
The proof of Lemma \ref{cor: conditional measure fake leaf} uses the following fact. For any $\chi' \in \Sigma$, for $\mu$-a.e. $x$, we have
\aryst
\ta(\R w) \tcW^{\chi'}_*(x) = \ta(\R w) \cW^{\chi'}_*(x), * = \emptyset, F, G.
\earyst
In particular, when $\chi' \in \Sigma_Q$, the conditional measure of $\tilde \mu$ on $\ta(\R w) \tcW^{\chi'}_G(x)$ is equivalent to the natural push-forward of the Lebesgue measure on $\R \times G^{\chi'}$.
\end{rema}
By Remark \ref{rema.productmeasure}, for every $\chi' \in \Sigma_Q$ we can define $\tcW^{\chi'}_G$-holonomy maps between $\ta(\R w)$-orbits within   $\ta(\R w) \cW^{\chi'}_G(x)$ for $\mu$-a.e. $x$.

\begin{lemma}\label{lem. fake foliation is ac}
	Let $x$ be a  $\mu$-typical point, and let $h \in G^{\chi'}$ such that the $\tcW^{\chi'}_G$-holonomy map between $\ta(\R w)x$ and $\ta(\R w)\ta(h)x$ is defined for Lebesgue almost every point. Then
	the $\tcW^{\chi'}_G$-holonomy map between $\ta(\R w)x$ and $\ta(\R w)\ta(h)x$ is absolutely continuous.
\end{lemma}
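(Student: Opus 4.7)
The plan is to work inside the smoothly immersed $2$-dimensional submanifold $N := \ta(\R w)\cW^{\chi'}_G(x) = \ta(\R w)\ta(G^{\chi'})x$ of $M^\a$, equipped with smooth local coordinates $(s,g) \mapsto \ta(sw)\ta(g)x$ in which the $\ta(\R w)$-orbits are the vertical fibers $\{g = \mathrm{const}\}$ and the smooth coarse Lyapunov leaves $\cW^{\chi'}_G$ are the horizontal slices $\{s = \mathrm{const}\}$. Combining Lemma \ref{lem fake coarse leaf} with Remark \ref{rema.productmeasure}, each fake leaf $\tcW^{\chi'}_G(z) \subset N$ is, in these coordinates, the graph of a measurable function $f_z : G^{\chi'} \to \R$, meeting each vertical $\ta(\R w)$-orbit in a unique point for $\mu$-typical $z$. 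Consequently the holonomy of interest reduces to a map $\pi : \R \to \R$ in the $s$-variable, defined Lebesgue-a.e.\ by the hypothesis.

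I will show $\pi$ preserves the Lebesgue class by disintegrating $\tilde\mu \restriction N$ in two compatible ways. Because $\chi' \in \Sigma_Q$ and $\mu$ is $Q$-invariant, $\mu^{\cW^{\chi'}_G}_z$ is proportional to Haar measure $\nu_{G^{\chi'}}$; combined with Remark \ref{rema.productmeasure}, this identifies $\tilde\mu \restriction N$ (up to density) with the product $\mathrm{d}s \otimes \mathrm{d}\nu_{G^{\chi'}}$. On the other hand, Lemma \ref{cor: conditional measure fake leaf} gives that the conditional of $\tilde\mu$ along each fake leaf, pushed through the graph parametrization $g \mapsto (f_z(g), g)$, is equivalent to $\nu_{G^{\chi'}}$. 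Disintegrating $\tilde\mu \restriction N$ by the partition into fake leaves and using $\ta(\R w)\ta(h)x$ as the transverse parameter space, Fubini then forces $\pi_*(\mathrm{Leb})$ to lie in the Lebesgue class on the target $\ta(\R w)\ta(h)x$, which is precisely the absolute continuity of $\pi$.

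The main obstacle will be making this disintegration/Fubini argument rigorous given that the fake foliation is only measurable, so classical smooth-foliation transverse-absolute-continuity techniques do not directly apply. To overcome this I would use the $\tb$-invariance of both $\tilde\mu$ and the fake foliation, together with the H\"older regularity of the graph functions $f_z$ on Pesin sets provided by Lemma \ref{lem fake stable manifold}. Concretely, choosing $w' \in \liealga$ with $\chi'(w') < 0$ makes $\tb(kw')$ contract fake leaves onto any prescribed Pesin set, where the holonomy is a genuine H\"older homeomorphism that trivially preserves the Lebesgue class on its transversals. Passing to the limit, while using the $\tb$-invariance of the product conditional structure on $N$, then promotes this local H\"older absolute continuity to the required global statement for $\pi$.
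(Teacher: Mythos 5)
Your route is genuinely different from the paper's, and it has a gap at the step you yourself flag as the main obstacle. The paper's proof is a short dynamical argument: for two points $\ta(t_iw)x$ whose fake leaves meet the target orbit at $\ta(s_iw)\ta(h)x$, one applies $\tb(na)$ with $\chi'(a)<0$ so that source and image converge, writes the resulting separations along the $w$-orbits as $u_n$ and $v_n$ (so $|u_n-v_n|\to 0$), and then invokes Lemma \ref{lem. worbitboundeddistortion}, which gives the uniform two-sided bound $|t|/4<|s|<|t|$ for the time-changed reparametrization of $w$-orbits. This yields $|t_1-t_2|/4\le |s_1-s_2|\le 4|t_1-t_2|$, i.e.\ the holonomy extends to a bi-Lipschitz map, which is much stronger than absolute continuity. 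Your proposal never uses Lemma \ref{lem. worbitboundeddistortion}, and that is exactly the missing quantitative input.

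Concretely, two problems. First, your claimed mechanism for the limiting step — that after contracting onto a Pesin set the holonomy "is a genuine H\"older homeomorphism that trivially preserves the Lebesgue class" — is false: H\"older, and even Lipschitz, homeomorphisms of $\R$ need not preserve the Lebesgue class (the inverse of $t\mapsto t+C(t)$, with $C$ the Cantor function, is $1$-Lipschitz yet pushes a null set onto a set of positive measure). Only a \emph{two-sided} Lipschitz control gives this for free, and that is what the paper extracts from Lemma \ref{lem. worbitboundeddistortion}. Second, your disintegration/Fubini sketch, even if completed, proves absolute continuity only between $\nu_{G^{\chi'}}$-almost every pair of transversals: disintegrating the product conditional of Remark \ref{rema.productmeasure} over the measurable fake foliation identifies the transverse parametrizations $\sigma_g$ induced by Lebesgue on the vertical orbits with the factor measure $\hat\nu$ only for a.e.\ $g$, and one must use \emph{equivalence} (not mere absolute continuity) of the leafwise conditionals in Lemma \ref{cor: conditional measure fake leaf} to get $\sigma_{g_0}\sim\sigma_{g_1}$; you would then still need an extra argument to place the specific transversals $\ta(\R w)x$ and $\ta(\R w)\ta(h)x$ in the good set. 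The paper's bi-Lipschitz bound sidesteps all of this and holds for every pair on which the holonomy is defined.
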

\begin{proof}
	We will show that this $\tcW^{\chi'}_G$-holonomy map extends to a Lipschitz map.
	
	For $i=1,2$, we take $t_i, s_i \in \R$ such that $\tcW^{\chi'}_G(\ta(t_iw)x)$ intersects $\ta(\R w)\ta(h)x$ at $\ta(s_i w)\ta(h)x$.
		 Take an arbitrary $a \in \liealga$ such that $\chi'(a) < 0$.
	For any integer $n > 0$, we denote by $u_n, v_n \in \R$ constants such that
	\aryst
	\tb(n a)\ta(t_1 w)x &=& \ta(u_n w)\tb(na)\ta(t_2w)x, \\
	\tb(na) \ta(s_1 w) \ta(h)x &=& \ta(v_n w) \tb(na) \ta(s_2w)\ta(h)x.
	\earyst
	On one hand, we know that for $i=1,2$, \aryst
	d(\tb(na) \ta(t_i w)x, \tb(na) \ta(s_i w)\ta(h)x) \to 0 \quad \mbox{ as } \quad n \to +\infty.
	\earyst
 This implies that $|v_n-u_n|$ tends to $0$ as $n$ tends to infinity.
 On the other hand, by Lemma \ref{lem. worbitboundeddistortion}, we see that both $|\frac{t_1-t_2}{u_n}|$, $|\frac{s_1 - s_2}{v_n}|$ are bounded from above and from below by constants  independent of $n$.  This implies our lemma.
\end{proof}

\subsection{The proof for the non-atomic case}

\begin{proof}[Proof of Proposition \ref{main tech prop} --- the non-atomic case]
Our argument is an adaptation of the $\pi$-partition trick (see \cite{KK, KKRH}).
The main tool is the following lemma.
\begin{lemma}
For any Pesin set $R$, there exists a constant $K > 0$ such that for $\mu$-a.e. $x \in R$, and $\mu^{\cW^{\chi}_F}_x$-a.e. $y \in R \cap \cW^{\chi}_{F, loc}(x)$, there exists a sequence $\{l_n\}_{n \in \N} \subset \liealga$ satisfying that $\ta(l_n)x \to y \mbox{ as } n \to \infty$ and $\norm{D\ta(l_n)|_{E(x)}} < K$.
\end{lemma}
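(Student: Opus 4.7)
The plan is to run a closing-and-shadowing argument along the one-dimensional fiberwise coarse Lyapunov foliation $\cW^\chi_F$, in the spirit of the $\pi$-partition trick of Kalinin--Katok \cite{KK, KKRH}. The goal is to realise a typical $y$ on the local leaf through $x$ as a limit of $\liealga$-translates of $x$ via recurrence of the time-changed action $\tb$, and then to obtain the uniform derivative bound from the exact scaling identity $\norm{D\tb(k)|_{E(x)}}_\varepsilon = e^{\chi(k)}$ together with the comparison of the standard and $\varepsilon$-Lyapunov norms on Pesin sets.

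First, I transfer to the fake-leaf setting: let $\tilde x$ and $\tilde y$ denote the images of $x$ and $y$ on $\tcW^\chi_F(x)$ under the parametrisation $\varphi^\chi$ of Lemma \ref{lem fake coarse leaf}. By Lemma \ref{cor: conditional measure fake leaf} the non-atomicity of $\mu^{\cW^\chi_F}_x$, which is the standing hypothesis of this section, transfers to non-atomicity of $\tilde\mu^{\tcW^\chi_F}_x$ near $\tilde y$. Using $\liealga$-ergodicity of $\tilde\mu$ (which is inherited from $H$-ergodicity of $\mu$ via the absolute-continuity relation of Lemma \ref{lem time change}) together with H\"older continuity of $\varphi^\chi$ on Pesin sets, I produce a sequence $m_k \in \liealga$ such that $\tb(m_k)\tilde x \to \tilde y$ in $M^\a$.

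Second, I would arrange the $m_k$ so that $\chi(m_k)$ stays uniformly bounded. Because $\tb$ contracts or expands $\tcW^\chi_F$ by the exact factor $e^{\chi(\cdot)}$ in the $\varepsilon$-Lyapunov norm, the leafwise scale at which $\tb(m_k)\tilde x$ reaches a fixed macroscopic neighborhood of $\tilde y$ pins $\chi(m_k)$ down in a bounded window determined by the leafwise geometry of $R$. Concretely, one selects $m_k$ so that $\tb(m_k)\tilde x$ first returns to a slightly enlarged Pesin set at a fixed leafwise distance $\delta$ from $\tilde y$ and then drops to the final approach; the exact scaling then forces $\chi(m_k)$ into a bounded interval. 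Third, I convert back to the $\ta$-action: by Lemma \ref{lem time change} we have $\tb(m_k)\tilde x = \ta(\bg(\tilde x, m_k) + \varphi^\chi_x(x)w)x$, so setting
\[
l_k \;=\; \bg(\tilde x, m_k) + (\varphi^\chi_x(x) - \varphi^\chi_x(y))w
\]
gives $\ta(l_k)x = \ta(-\varphi^\chi_x(y)w)\tb(m_k)\tilde x \to \ta(-\varphi^\chi_x(y)w)\tilde y = y$. The estimate $|g(\tilde x, m_k)| \leq 2\varepsilon\norm{m_k}$ from \eqref{item: g}, the bound on $\chi(m_k)$ from the previous step, and the boundedness of $\varphi^\chi_x(x), \varphi^\chi_x(y)$ on the Pesin set together keep $\chi(l_k)$ uniformly bounded. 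Since $x$ and $\ta(l_k)x$ both lie in the enlarged Pesin set $R(e^{2\norm{l_k}\varepsilon}C)$ by \eqref{item: pesinssetandmap}, the standard operator norm on $E$ and the $\varepsilon$-Lyapunov norm are comparable up to a factor $C(R)$ depending only on $R$, which yields
\[
\norm{D\ta(l_k)|_{E(x)}} \;\leq\; C(R) e^{\chi(m_k)} \;\leq\; K.
\]

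The main obstacle is the simultaneous control in the second step: since $\tilde y$ is generically not on the $\liealga$-orbit of $\tilde x$, producing $m_k$ with $\tb(m_k)\tilde x \to \tilde y$ and $\chi(m_k)$ bounded requires a careful density argument along the fake leaf relying crucially on the non-atomicity hypothesis for $\tilde\mu^{\tcW^\chi_F}_x$. This coupling of leafwise density with an exact-scaling return time is exactly the heart of the $\pi$-partition trick, and is where the standing hypothesis of this section is fully used.
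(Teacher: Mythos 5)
There is a genuine gap at your second step, and it is precisely where the entire difficulty of the lemma lives. You propose to find $m_k \in \liealga$ with $\tb(m_k)\tilde x \to \tilde y$ using ergodicity of the full $\liealga$-action, and then to ``arrange'' $\chi(m_k)$ to stay bounded via the exact scaling $\norm{D\tb(k)|_{E}}_\varepsilon = e^{\chi(k)}$. But convergence of $\tb(m_k)\tilde x$ to $\tilde y$ in the ambient space $M^\a$ places no constraint on $\chi(m_k)$: the return times of a generic orbit to a small neighborhood of $\tilde y$ can perfectly well have $\chi(m_k) \to \pm\infty$, since $\tb(m_k)\tilde x$ need not lie on, or respect the scale of, the leaf $\tcW^\chi_F(\tilde x)$. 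The exact scaling tells you how the leaf through $\tilde x$ is stretched, not where the orbit point lands, so ``the leafwise scale pins $\chi(m_k)$ down'' is not justified. The paper avoids this entirely by working from the outset inside the singular subgroup $L_\chi = \ker\chi$: it takes a generic $a \in L_\chi$ and uses returns of the form $\tb(k_n a)x$, so that $\chi(k_n a) = 0$ identically and the time-changed derivative on $E$ is exactly $1$ in the $\varepsilon$-norm; the constant $K$ then comes only from the norm comparison on an enlarged Pesin set and the boundedness of $|\varphi_x(y)|$ on $R$. The price is that one must prove that a conditional-measure-typical $y \in \cW^\chi_{F,loc}(x)$, lifted to the fake leaf, lies in the same $\tb(a)$-ergodic component as $x$, i.e., the $\sigma$-algebra inclusion $[\tcE_{\tb(a)}] \subset [\tcW^\chi_F]$.

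That inclusion is the actual content of the proof, and nothing in your proposal supplies it. The paper obtains it by chaining $[\tcE_{\tb(a)}] \subset [\tcW^{-}_{\tb(a)}]$, $[\tcE_{\tb(a)}] \subset [\tcW^{-\chi}]$ (Lemma \ref{lem.accforfakefoliation}), $[\tcW^{-}_{\tb(a)}] \cap [\tcW^{-\chi}] \subset [\tcW^{-}_{\tb(b)}]$ (Lemma \ref{lem: tbbtbaminuschi}), and $[\tcW^{-}_{\tb(b)}] \subset [\tcW^\chi_F]$ from Ledrappier--Young, via \eqref{item: chifhchi}. The middle steps are where the standing hypotheses $\chi \in -\Sigma^{non}_Q$ (giving $-\chi = \chi_1 + \chi_2$ with $\chi_i \in \Sigma_Q$) and the $Q$-invariance of $\mu$ (giving Haar conditionals along $G^{\chi_i}$ and absolute continuity of the fake holonomies, Lemma \ref{lem. fake foliation is ac}) are used. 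Your proposal never invokes either hypothesis, which is a reliable sign the essential mechanism is missing; non-atomicity of $\mu^{\cW^\chi_F}_x$ alone cannot yield the lemma.
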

\begin{proof}
 
In the following,  for any $b \in \liealga$, we let $[\tcW^{-}_{\tb(b)}]$ denote the sub-$\sigma$-algebra of $\cB_{M^{\a}}$ whose elements are unions of subsets of the form $\tcW^{-}_{\tb(b)}(y)$ with $y \in M^{\a}$ (this was previously introduced in  \cite[Section 6.4]{KKRH}). We define $[\tcW^{\chi}]$ and $ [\tcW^{\chi}_F] $ analogously. 
For any $a \in \liealga$, we denote by $\tcE_{\tb(a)}$ the sub-$\sigma$-algebra of $\cB_{M^\a}$ formed by $\tb(a)$-invariant sets.

We take a singular generic $a \in L_\chi$, i.e., $a \in L_\chi$ but $a \notin L_{\chi'}, \forall \chi' \in \Sigma \setminus \{\pm \chi \}$, and some generic $b \in \liealga$, close to $a$, such that $\chi(b)  > 0$ and $\chi'(b)$, $\chi'(a)$ have the same sign for all $\chi' \in \Sigma \setminus \{\pm \chi \}$. Then by \cite{LedYou1}, we have
\ary  \label{item: chifhchi}
 [\tcW^{\chi}_F] \supset [\tcW^{\chi}] \supset  [\tcW^{+}_{\tb(b)}] = [\tcW^{-}_{\tb(b)}].
 \eary

We also have the following inclusion.
\begin{lemma}\label{lem: tbbtbaminuschi}
We have $ [\tcW^{-}_{\tb(b)}] \supset  [\tcW^{-}_{\tb(a)}] \cap [\tcW^{-\chi}]$.
\end{lemma}
\begin{proof}
Take an arbitrary function $f \in L^2(M^\a, \tilde{\mu})$. We set 
\aryst
f_0 = \mathbb{E}(f \mid [\tcW^{-}_{\tb(a)}] \cap [\tcW^{-\chi}]).
\earyst
Then by definition, Lemma \ref{cor: conditional measure fake leaf} and Corollary \ref{cor: graph coincides}, there exist $\mu$-conull sets $\Omega_0, \Omega_1 \subset M^{\a}$ such that:
\enmt
\item for every $x \in \Omega_1$, for $\mu^{\cW^{-\chi}}_x$-a.e. $y$,  the point  $z := \ta( \varphi^\chi_x(y)w)y$ satisfies $f_0(x) = f_0(z)$, and $\varphi^\chi_x(y) = \varphi^b_x(y)$;

\item for every $x \in \Omega_0$, $\mu^{\cW^{-}_{\ta(a)}}_x$-a.e. $y$ belongs to $\Omega_1$. Moreover, for every $y \in \Omega_1 \cap \cW^{-}_{\ta(a)}(x)$, we have $z := \ta( \varphi^{a}_x(y)w)y \in \Omega_1$, $f_0(x) = f_0(z)$, and  $\varphi^{a}_x(y) = \varphi^b_x(y)$.
\eenmt

We claim that for $\mu$-a.e. $x$, for $\tilde{\mu}^{\tcW^{-}_{\tb(b)}}_x$-a.e. $z$, we have $f_0(x) = f_0(z)$. This will imply that $f_0$ is $[\tcW^{-}_{\tb(b)}]$-measurable, and conclude the proof.

As $a \in L_{\chi}$,  we have $\ta(a) \ta(g) = \ta(g) \ta(a)$ for any $g \in G^{-\chi}$.
Thus for any $x \in M^{\a}$, any $y \in \cW^{-}_{\ta(a)}(x)$, any $g \in G^{-\chi}$, we have $\ta(g)y \in \cW^-_{\ta(a)}(\ta(g)x)$. By $\chi \in \Sigma^{out}_3$, we know that $\mu$ is $G^{-\chi}$-invariant.  This implies that for any $g \in G^{-\chi}$, for $\mu$-a.e. $x$, we have
\aryst
[(\ta(g))_* \mu^{\cW^{-}_{\ta(a)}}_x] = [\mu^{\cW^{-}_{\ta(a)}}_{\ta(g)x}].
\earyst
Thus for $\mu$-a.e. $x \in \Omega_0$, for a $\mu^{\cW^-_{\ta(b)}}_x$-typical $y$, there exists $y' \in \Omega_1 \cap \cW^{-}_{\ta(a)}(x)$ such that $y \in \cW^{-\chi}(y')$. Set $z' = \ta(\varphi_x^{b}(y')w)y'$ and $z''= \ta(\varphi_x^{b}(y')w)y \in \cW^{-\chi}(z')$.
Then $z' \in \Omega_1$ and $f_0(x) = f_0(z')$. As the holonomy map between $\cW^{-\chi}(y')$ and $\cW^{-\chi}(z')$ along $\ta(\R w)$-orbits is absolutely continuous, a typical choice of $y \in \cW^{-\chi}(y')$ corresponds to a typical choice of $z'' \in \cW^{-\chi}(z')$. Thus for a typical $y$ we have
\aryst
z  :=  \ta(\varphi^b_{x}(y)w)y = \ta(\varphi^\chi_{z'}(z'')w)z''.
\earyst 
Consequently, we have $f_0(z) = f_0(z') = f_0(x)$. 
\end{proof}

\begin{lemma}\label{lem.accforfakefoliation}
We have $[\tcE_{\tb(a)}] \subset [\tcW^{-\chi}]$.
\end{lemma}
\begin{proof}
	 By $\chi \in \Sigma^{out}_3$, we have $\tcW^{-\chi}_G = \tcW^{-\chi}$. Thus it suffices to show that $[\tcE_{\tb(a)}] \subset [\tcW^{-\chi}_G]$.
 
   We fix a continuous function $\theta$ on $M^\a$. We define
   \aryst
   B^{\pm}_\theta = \{ x \mid \lim_{n \to \pm \infty} \frac{1}{n}\sum_{i=0}^{n-1} \theta(\tb(na)x) = \int
    \theta d\tilde \mu^{\tcE_{\tb(a)}}_x \}
   \earyst
   where $\mu^{\tcE_{\tb(a)}}_x$ denotes the $\tb(a)$-ergodic component of $\tilde \mu$ at $x$.
   
   By Birkhoff's ergodic theorem, we know that for $\tilde \mu$-a.e. $x$, $B^+_\theta(x) = B^-_\theta(x)$. In this case, we say that $x$ is regular (with respect to $\theta$) and denote $B_\theta(x) := B^{\pm}_\theta(x)$.
   Consequently, by the $\ta(\R w)$-invariance of $\mu$ and the fact that $\tilde \mu \sim \mu$, the conditional measures of $\tilde \mu$ along $\ta(\R w)$-orbits are absolutely continuous with respect to Lebesgue. Thus for $\tilde\mu$-a.e. $x$, for Lebesgue almost every $t\in \R$,  $\ta(tw) x$ is regular.
   
   We let $W$ be the set of $x \in M^\a$ such that for $\eta \in \{ -\chi, \chi_1, \chi_2\}$, $x$ satisfies Corollary \ref{cor: graph coincides}. We know that $W$ is a $\tilde \mu$-conull set. Then for $\tilde \mu$-a.e. $x$, for Lebesgue almost every $t\in \R$,  $\ta(tw)x \in W$.
   
   By Fubini's lemma, Remark \ref*{rema.productmeasure} and the above discussion on regular points, we know that for $\eta \in \{ -\chi, \chi_1, \chi_2\}$, for $\nu_{G^\eta}$-a.e. $h$, for $\tilde\mu$-a.e. $x$, for Lebesgue almost every $t\in \R$, $\ta(tw)x$ is regular and $\varphi^\eta_{\ta(tw)x}$ is defined at $\ta(tw) \ta(h)x$.  We denote the above $\nu_{G^\eta}$-conull set by $\Omega_{\eta}$, and for every $h \in \Omega_\eta$ we denote by $W_h$ the above $\mu$-conull set of $x$. 
   
       By $\chi \in -\Sigma^{non}_Q$, there exist $\chi_1, \chi_2 \in \Sigma_Q \setminus \{\pm \chi \} $ such that $-\chi = \chi_1 + \chi_2$.
   Then for $\nu_{G^{-\chi}}$-a.e. $h$, there exist $h_i, h_{i+2} \in \Omega_{\chi_i}$, $i=1,2$ such that $h = h_4 h_3 h_2h_1$. 
   
   It is direct to see that $\tilde \mu$-a.e. $x$ satisfies that 
   \aryst
   x \in W_{h_1},  \quad
   \ta(h_1)x \in W_{h_2}, \quad \ta(h_2h_1)x \in W_{h_3}, \quad
   \ta(h_3h_2h_1)x \in W_{h_4}.
   \earyst
   
   By Lemma \ref{lem. fake foliation is ac}, the $\tcW^{\chi_1}_G$-holonomy map between $\ta(\R w)x$ and $\ta(\R w)\ta(h_1)x$ within the leaf $\ta(\R w)\cW^{\chi_1}_G(x)$ is absolutely continuous.
   More precisely, for Lebesgue almost every $t \in \R$, the intersection between $\tcW^{\chi_1}_G(\ta(tw)x)$ and $\ta(\R w)\ta(h_1)x$ is $\ta(\phi(t)w ) \ta(h)x$
   where 
   \aryst
   \phi(t) = \varphi^{\chi_1}_{\ta(tw)x}(\ta(tw)\ta(h_1) x)+t.
   \earyst
   Lemma \ref{lem. fake foliation is ac} implies that $\phi$ preserves the Lebesgue class.
    Consequently, for Lebesgue almost every $t\in\R$, 
   $\ta(tw)x, \ta(\phi(t)w ) \ta(h) x$ are both regular.
   
   By iterating the above argument, we see that for Lebesgue almost every $t \in \R$, there exist regular points $x_1,\cdots,x_4$ such that the following is true. Set $x_0 = \ta(tw) x$. We have
   \aryst
   x_1 \in \tcW^{\chi_1}_G(x_0), 
   x_2 \in \tcW^{\chi_2}_G(x_1),
   x_3 \in \tcW^{\chi_1}_G(x_2), 
   x_4 \in \tcW^{\chi_2}_G(x_3).
   \earyst
   Moreover, there exists $s \in \R$ such that $x_4 = \ta(s w) \ta(h)x_0$.
   
   By definition, it is easy to see that
   \aryst
   B_{\theta}(x_0) = B_{\theta}(x_4),
   \earyst
   and for some $c \in \liealga$ such that both $\chi_1(c), \chi_2(c) < 0$, we have
   \aryst
   d(\tb(nc)x_0, \tb(nc)x_4) \to 0 \quad \mbox{as} \quad n \to \infty.
   \earyst
   This implies that $x_4 \in \tcW^{-\chi}_G(x_0)$ and consequently $s = \varphi^{-\chi}_{x_0}(\ta(h)x_0)$.
   
   Finally, by Fubini's lemma, we deduce that for $\mu$-a.e. $x$, for $\tcW^{-\chi}_G$-a.e. $y$, we have $B_\theta(x) = B_\theta(y)$. By take $\theta$ over a dense subset of $L^1(M^\a, \tilde \mu)$, we conclude the proof.
\end{proof}

It is well-known that 
\aryst
[\tcE_{\tb(a)}] \subset [\tcW^{-}_{\tb(a)}].
\earyst
Consequently, by Lemma \ref{lem: tbbtbaminuschi} and Lemma \ref{lem.accforfakefoliation} we have
\aryst
 [\tcE_{\tb(a)}] \subset [\tcW^{\chi}_F].
\earyst

Let $R$ be the Pesin set in the lemma. 
	Let $\varphi_x$ be given by Lemma \ref{lem fake coarse leaf} for $\tcW^{\chi}_F$.
By Lemma \ref{lem fake coarse leaf}, there exists $K_1 > 0$ such that 
$|\varphi_x(y)| < K_1$ for any $y \in R \cap \cW^{\chi}_{F, loc}(x)$. Then by \eqref{item: pesinssetandmap}  the point $z = \ta(\varphi_x(y)w)y$ belongs to a Pesin set $R' \supset R$ which depends on $R$, but is independent of $x$ and $y$.

By Lemma \ref{cor: conditional measure fake leaf}, for $\mu$-a.e. $x$, for $\mu^{\cW^{\chi}_F}_x$-a.e. $y \in R \cap \cW^{\chi}_{F, loc}(x)$, $\ta(\varphi_x(y)w)y$ is a $\tilde{\mu}^{\tcE_{\tb(a)}}_x$- density point of $R'$.
  Then by Birkhoff's ergodic theorem, for the above $x,y,z$ there exists a sequence $\{ k_n \}_{n \in \N} \subset \N$ such that $\tb(k_n a)x \in R'$ converges to $z$ as $n$ goes to infinity. Let $l_n = \bg(x,k_n a)-\varphi_x(y)w$, then we have
\aryst
\ta(l_n)x = 
\ta(-\varphi_x(y)w)\tb(k_n a)x \to y \quad\mbox{ as }\quad n \to \infty.
\earyst
We have
\aryst
D\ta(l_n)(x) = D\ta(-\varphi_x(y)w)(\tb(k_n a)x) D\ta(\bg(x,k_n a))(x).
\earyst
Moreover
\aryst
\norm{D\ta(\bg(x, k_n a))|_{E(x)}} \leq K_2^{2}
\earyst
where $K_2$ is the maximal distortion between $\norm{\cdot}_{\varepsilon}$ and the background metric on $M^{\a}$ over the Pesin set $R'$. By $|\varphi_x(y)| < K_1$, we can see that there exists $K > 0$ depending only on $R$, such that 
\aryst
\norm{D\ta(l_n)|_{E(x)}} < K.
\earyst
This concludes the proof.
\end{proof}
By the argument in \cite{KK}, we see that $\mu^{\cW^{\chi}_F}_x$ is absolutely continuous with positive density Lebesgue almost everywhere.  As $\cW^{\chi}$ is $C^1$ foliated by $\cW^{\chi}_F$ and $\cW^\chi_G$, by the absolute continuity of the $\cW^\chi_G$-holonomy maps between different $\cW^{\chi}_F$-leaves, we deduce that $\mu^{\cW^{\chi}_G}_x$ is non-atomic for $\mu$-a.e. $x$.  Consequently, $\mu^{G^{\chi}}_x$ is non-atomic for $\mu$-a.e. $x$.
\end{proof}

\section{When $\mu^{\cW^\chi_F}$ is atomic}
\label{sec.atomic}

Through out this section, we assume that for $\mu$-a.e. $x$, $\mu^{\cW^{\chi}_{F}}_x$ is supported on a discrete set with respect to the leafwise metric. Then the following result is well-known.
\begin{lemma}
	For $\mu$-a.e. $x$, $\mu^{\cW^\chi_F}_x$ is the Dirac measure at $x$.
\end{lemma}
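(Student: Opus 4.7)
The plan is to combine the $A$-equivariance \eqref{item: invarianceconditionalmeasure} of the conditional-measure classes with the fact that $\cW^\chi_F$ is expanded by any $\ta(a)$ with $\chi(a)>0$: an atom at $y\neq x$ would be pushed arbitrarily far from $x$ along the leaf under such iterates, which cannot be reconciled with Poincar\'e recurrence. First I would invoke the standard fact that, for a measurable partition subordinate to $\cW^\chi_F$, $\mu$-a.e.\ $x$ lies in $\mathrm{supp}(\mu^{\cW^\chi_F}_x)$; combined with the leafwise discreteness hypothesis, this means $x$ itself is an atom of $\mu^{\cW^\chi_F}_x$ for $\mu$-a.e.\ $x$, so it remains only to prove that $x$ is the \emph{unique} atom.

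To that end I would introduce the measurable function
$$\ell(x) = \inf\{d_{\cW^\chi_F}(x,y) : y \in \mathrm{supp}(\mu^{\cW^\chi_F}_x),\ y \neq x\} \in (0,\infty],$$
with $\ell(x)=\infty$ precisely when $x$ is the unique atom; by discreteness $\ell(x)>0$ whenever $x$ is an atom. By \eqref{item: invarianceconditionalmeasure}, for each $a \in \liealga$ the map $\ta(a)$ sends atoms of $\mu^{\cW^\chi_F}_x$ bijectively onto atoms of $\mu^{\cW^\chi_F}_{\ta(a)x}$. Fix $a \in \liealga$ with $\chi(a)>0$; since $\chi \in \Sigma^{out}_3$ the bundle $E=E^\chi_F$ is one-dimensional, so standard Pesin estimates along the expanding one-dimensional leaf $\cW^\chi_F(x)$ give, for any Lyapunov regular $x$ and any fixed $y\in\cW^\chi_F(x)$,
$$d_{\cW^\chi_F}(\ta(na)x,\ta(na)y) = e^{n\chi(a)+o(n)}\, d_{\cW^\chi_F}(x,y).$$
Applying this to the atom realizing $\ell(x)$, and using the bijection to pull back the nearest atom of $\mu^{\cW^\chi_F}_{\ta(na)x}$, yields the scaling identity $\ell(\ta(na)x)=e^{n\chi(a)+o(n)}\,\ell(x)$.

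The conclusion is then immediate from Poincar\'e recurrence. The set $\{x:\ell(x)=\infty\}$ is $H$-invariant via \eqref{item: invarianceconditionalmeasure}, so by $H$-ergodicity it has $\mu$-measure $0$ or $1$. If it had measure $0$, some sublevel set $E_N=\{\ell\leq N\}$ would have positive $\mu$-measure, and since $\ta(a)$ preserves $\mu$, recurrence would provide, for $\mu$-a.e.\ $x\in E_N$, a sequence $n_k\to\infty$ with $\ta(n_k a)x\in E_N$; but the scaling forces $\ell(\ta(n_k a)x)=e^{n_k\chi(a)+o(n_k)}\ell(x)\to\infty$, contradicting $\ell(\ta(n_k a)x)\leq N$. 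Hence $\ell=\infty$ $\mu$-a.e., i.e.\ $\mu^{\cW^\chi_F}_x$ is the Dirac mass at $x$ for $\mu$-a.e.\ $x$. The one point that requires genuine care is justifying the scaling of $\ell$ under $\ta(na)$: one must ensure that the atom realizing $\ell(x)$ remains in a Pesin chart long enough for the one-dimensional Lyapunov expansion estimate to apply uniformly, which is routine because $\dim E=1$ and Lyapunov regular points are $\mu$-conull.
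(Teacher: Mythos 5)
Your proof is correct and follows essentially the same route as the paper: the paper's (very terse) argument introduces $r(x):=\sup\{\sigma \mid \mu^{\cW^\chi_F}_x(B(x,\sigma)\setminus\{x\})=0\}$, which is exactly your $\ell(x)$, and derives the contradiction from $A$-invariance plus Poincar\'e recurrence. Your write-up merely fills in the details the paper leaves implicit (the scaling of $\ell$ under the expanding element via the equivariance \eqref{item: invarianceconditionalmeasure} and the restriction to Pesin sets), so no further comparison is needed.
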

\begin{proof}
	Assume to the contrary that the lemma fails.
	For $\mu$-a.e. $x$, we define 
	\aryst
	r(x) := \sup \{ \sigma \mid \mu^{\cW^\chi_F}_x(B(x,\sigma) \setminus \{x\}) = 0 \}.
	\earyst
	We obtain a contradiction by the $A$-invariance of $\mu$ and Poincar\'e's recurrence theorem. 
\end{proof}

\subsection{A local entropy forumula}\label{sec: A local entropy forumula}

In this subsection, we recall a local entropy formula from \cite{LedYou2}.

We fix an arbitrary $k \in \liealga$ such that $\chi(k) > 0$.  Let us denote $f= \ta(k)$.

By the construction in \cite[Section 3]{LedYou1}, we can also choose two measurable partitions $\eta_0$ and $\eta_1$ such that
\enmt
\item $\eta_0$, resp. $\eta_1$, is subordinate to $\cW^{\chi}_G$, resp. $\cW^\chi$;
\item $\eta_0$, $\eta_1$ are all $f$-increasing and $f$-generating;
\item $\eta_0 \geq \eta_1$.
\eenmt
Moreover, we can also ensure that
\enmt
\item $\cW^{\chi}_{G, loc}(y) \cap \eta_1(x) = \eta_0(y)$ for $\mu$-a.e. $x$ and every $y \in \eta_1(x)$;
\item $f^{-1}(\eta_0(x))  = \eta_0(f^{-1}(x)) \cap f^{-1}(\eta_1(x))$ for $\mu$-a.e. $x$ and every $y \in \eta_1(x)$.
\eenmt

\begin{lemma}\label{lem: local entropy leq}
	We have
	\aryst
	h_{\mu}(f, \eta_1) \leq h_{\mu}(f, \eta_0) + \chi_F(k).
	\earyst
\end{lemma}
\begin{proof}
	This follows from \cite[Section 11]{LedYou2}.\end{proof}

\begin{rema}
	In Ledrappier-Young \cite{LedYou1}, this was proved in the setting where an invariant subfoliation of the unstable foliation is foliated by strong unstable foliation. Here neither $\cW^{\chi}_F$ or $\cW^\chi_G$ is not a strong subfoliation of $\cW^\chi$. But in our setting, the local product structure of $M^\a$ and the group action allows us to show that $\cW^\chi$ is $C^1$ foliated by both $\cW^\chi_F$ and $\cW^\chi_G$. This suffices for the construction of $\eta_0$, $\eta_1$.
\end{rema}

\begin{lemma} \label{lem: local entropy geq}
	We have
	\aryst
	h_{\mu}(f, \eta_1) = 2\chi_{G}(k).
	\earyst
\end{lemma}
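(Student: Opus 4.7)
The plan is to combine the atomicity of $\mu^{\cW^\chi_F}$ with the fact that $\pi_*\mu$ is Haar on $G/\Gamma$. I would proceed in two stages: first reduce $h_\mu(f, \eta_1)$ to $h_\mu(f, \eta_0)$ using atomicity, then evaluate $h_\mu(f, \eta_0)$ using the homogeneous structure on the base.

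For the first stage I would invoke the preceding lemma, which gives $\mu^{\cW^\chi_F}_x = \delta_x$ for $\mu$-a.e.\ $x$, together with the local $C^1$-product structure of $\cW^\chi$ as a product of $\cW^\chi_F$ and $\cW^\chi_G$ (valid because $\pi_*\mu$ is Haar and the base is homogeneous under $G$). By the compatibility $\cW^\chi_{G,\mathrm{loc}}(y) \cap \eta_1(x) = \eta_0(y)$ for every $y \in \eta_1(x)$, each atom $\eta_1(x)$ decomposes as a union of local $\cW^\chi_G$-slices parameterized by a small transversal inside $\cW^\chi_F(x)$. Disintegrating $\mu^{\eta_1}_x$ along this decomposition, the transversal factor is (up to normalization) the restriction of $\mu^{\cW^\chi_F}_x = \delta_x$; hence $\mu^{\eta_1}_x$ is concentrated on $\eta_0(x) \subseteq \eta_1(x)$. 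Consequently $\eta_0$ and $\eta_1$ generate the same $\sigma$-algebra mod $\mu$, and $h_\mu(f, \eta_1) = h_\mu(f, \eta_0)$.

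For the second stage, since $\pi_*\mu$ is Haar and $\pi|_{\cW^\chi_G(x)}$ is a local bijection onto the $G^\chi$-orbit $G^\chi\pi(x)$, the conditional measure $\mu^{\cW^\chi_G}_x$ is proportional to the push-forward of $\nu_{G^\chi}$ under the orbit map $g \mapsto g \cdot x$; in particular it is absolutely continuous with positive density along $\cW^\chi_G$-leaves. The Ledrappier--Young entropy formula, applied to the $f$-increasing, $f$-generating partition $\eta_0$ with absolutely continuous conditional measures, then gives
\[
h_\mu(f, \eta_0) = \int \log \bigl|\det Df|_{E^\chi_G}(x)\bigr|\, d\mu(x).
\]
The conjugation action of $\exp(k)$ on $G^\chi = \C E_{i,j}$ is multiplication by the real scalar $e^{\chi(k)}$; as a real-linear map on the $2$-real-dimensional root space $G^\chi$ this has Jacobian $e^{2\chi(k)} = e^{2\chi_G(k)}$, so $h_\mu(f, \eta_0) = 2\chi_G(k)$.

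The main obstacle will be the first stage: converting the global statement $\mu^{\cW^\chi_F}_x = \delta_x$ into the local concentration of $\mu^{\eta_1}_x$ on $\eta_0(x)$ requires a careful Fubini-type disintegration that uses the $C^1$-product structure of $\cW^\chi$ in an essential way. Once that is in place, the computation on $\cW^\chi_G$ is the standard Rokhlin volume formula, and combining the two stages yields $h_\mu(f, \eta_1) = 2\chi_G(k)$.
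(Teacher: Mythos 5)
The target value and the idea of splitting the entropy into a fiber contribution (zero, by atomicity) plus a base contribution (the volume growth $2\chi_G(k)$ of the two-real-dimensional root group) is the right shape, and is essentially what the paper extracts from \cite[Theorem 13.6]{BRHW1}. But your stage~1 contains a genuine error: atomicity of the conditionals $\mu^{\cW^\chi_F}_x$ does \emph{not} imply that $\mu^{\eta_1}_x$ is concentrated on $\eta_0(x)$, and the conclusion $h_\mu(f,\eta_1)=h_\mu(f,\eta_0)$ is false precisely in the case where this lemma is used: in Corollary \ref{lem: fiberislargerthangrouporbit} one additionally assumes $\mu^{G^\chi}_x$ atomic, which gives $h_\mu(f,\eta_0)=0$, while the present lemma asserts $h_\mu(f,\eta_1)=2\chi_G(k)>0$. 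The mistake is in the disintegration: when you decompose $\mu^{\eta_1}_x$ over the family of local $\cW^\chi_G$-slices, the measure on the transversal is the \emph{projection} (quotient measure) of $\mu^{\eta_1}_x$ along the slices, not the \emph{conditional} measure $\mu^{\cW^\chi_F}_x$; these are dual objects, and Diracness of one says nothing about the other. The model to keep in mind is a measure on $\R^2\times\R$ of the form $(\mathrm{id}\times r)_*\mathrm{Leb}_{\R^2}$ for a non-constant measurable $r:\R^2\to\R$: its conditionals on the vertical fibers are Dirac, yet it is not carried by any single horizontal slice, and its leafwise entropy under $(u,v)\mapsto(e^{\chi_G(k)}u,cv)$ is $2\chi_G(k)$, not $0$. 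This graph structure is exactly what Lemma \ref{lem. dila} establishes for $\mu^{\cW^\chi}_x$ and what the remainder of Section \ref{sec.atomic} analyzes.

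The correct reduction is therefore not to $h_\mu(f,\eta_0)$ but to the entropy of the factor on $G/\Gamma$ along $G^\chi$-orbits: since the fibers of $\pi$ inside $\cW^\chi(x)$ are the $\cW^\chi_F$-slices, atomicity makes $\pi$ essentially injective on the support of $\mu^{\cW^\chi}_x$, so $h_\mu(f,\eta_1)$ equals the corresponding leafwise entropy of $\pi_*\mu=\mathrm{Haar}$ along $G^\chi$, which is $2\chi_G(k)$ because $\mathrm{Ad}(\exp k)$ scales the two-real-dimensional root space by $e^{\chi_G(k)}$. Relatedly, your stage~2 claim that $\mu^{\cW^\chi_G}_x$ is absolutely continuous with positive density is unjustified and is contradicted by the standing hypothesis of the atomic case of Proposition \ref{main tech prop} (where $\mu^{G^\chi}_x$, hence $\mu^{\cW^\chi_G}_x$, is assumed atomic): the Haar property of $\pi_*\mu$ controls the projection $\pi_*\mu^{\cW^\chi}_x$, not the leafwise conditional upstairs.
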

\begin{proof}
	This is a consequence of \cite[Theorem 13.6]{BRHW1}, our hypothesis that $\mu^{\cW^\chi_F}$ is atomic, and the fact that $\pi_*\mu$ is the Haar measure on $G/\Gamma$.
\end{proof}

   \begin{cor}\label{lem: fiberislargerthangrouporbit}
   	If for $\mu$-a.e. $x$, $\mu^{G^\chi}_x$ is atmoic, then
	there exists a constant $\lambda > 1$ such that $\chi_F =  \lambda \chi_G$.
\end{cor}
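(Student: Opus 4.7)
The plan is to combine the two entropy identities in Section~\ref{sec: A local entropy forumula} with the assumed atomicity of $\mu^{G^{\chi}}_x$. The point is that Lemma~\ref{lem: local entropy geq} pins down $h_\mu(f,\eta_1)$ from below in terms of $\chi_G(k)$, while Lemma~\ref{lem: local entropy leq} controls $h_\mu(f,\eta_1)$ from above by $h_\mu(f,\eta_0)+\chi_F(k)$. If I can kill the $h_\mu(f,\eta_0)$-term using atomicity of $\mu^{G^{\chi}}_x$, I immediately get a linear inequality $2\chi_G(k)\le \chi_F(k)$ on the cone $\{\chi(k)>0\}$, and since both $\chi_F$ and $\chi_G$ lie in the class $[\chi]$ (so are positive multiples of one another), this forces $\chi_F=\lambda\chi_G$ with $\lambda\ge 2>1$.

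The main step is therefore to verify $h_\mu(f,\eta_0)=0$. First, note that $\cW^{\chi}_G(x)$ coincides with the $G^{\chi}$-orbit of $x$ (since $E^{\chi}_G$ is by construction the tangent direction to $G^{\chi}$), so $\mu^{\cW^{\chi}_G}_x$ and $\mu^{G^{\chi}}_x$ agree up to the identification of leaves. The hypothesis says $\mu^{G^{\chi}}_x$ is atomic; I would then repeat the Poincar\'e recurrence argument used in the lemma at the start of Section~\ref{sec.atomic}: using the $A$-invariance of $\mu$ (concretely, acting by elements $s\in L_{\chi}$ which commute with $G^{\chi}$, so preserve the leafwise distance up to a fixed factor), one shows that the radius of the first non-trivial atom must be preserved almost surely, which forces $\mu^{G^{\chi}}_x=\delta_x$. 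Since $\eta_0$ is subordinate to $\cW^{\chi}_G$, the conditional measure $\mu^{\eta_0}_x$ is proportional to the restriction of $\mu^{\cW^{\chi}_G}_x$ to $\eta_0(x)$, hence is also Dirac. For a measurable partition $\eta_0$ whose conditionals are almost surely trivial, $h_\mu(f,\eta_0)=0$.

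Combining everything, Lemma~\ref{lem: local entropy geq} gives $h_\mu(f,\eta_1)=2\chi_G(k)$, and Lemma~\ref{lem: local entropy leq} together with $h_\mu(f,\eta_0)=0$ gives $h_\mu(f,\eta_1)\le\chi_F(k)$, so $2\chi_G(k)\le\chi_F(k)$ for every $k\in\liealga$ with $\chi(k)>0$. Writing $\chi_F=\lambda\chi_G$ (allowed because both are positively proportional to $\chi$), this inequality evaluated on any $k$ with $\chi_G(k)>0$ yields $\lambda\ge 2$, in particular $\lambda>1$, which is the claim. The only delicate point I expect is the identification step $\mu^{G^{\chi}}_x\leftrightarrow\mu^{\cW^{\chi}_G}_x$ together with the passage from "atomic" to "Dirac"; both are standard given the group-action structure of the suspension, but they are what allow the entropy of the $\eta_0$-factor to collapse to zero rather than merely being bounded.
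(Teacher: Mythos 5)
Your proposal matches the paper's argument: the paper likewise notes that $\chi_F=\lambda\chi_G$ for some $\lambda>0$ by definition of the coarse class $[\chi]$, observes that the atomicity hypothesis forces $h_\mu(f,\eta_0)=0$, and combines Lemma \ref{lem: local entropy geq} with Lemma \ref{lem: local entropy leq} to get $\chi_F(k)\geq h_\mu(f,\eta_1)\geq 2\chi_G(k)$, hence $\lambda\geq 2>1$. Your extra justification of $h_\mu(f,\eta_0)=0$ (atomic conditionals on $\cW^\chi_G$ via the recurrence argument) is a correct elaboration of a step the paper leaves implicit.
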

\begin{proof}
	By the definition of $\chi$, there exists $\lambda > 0$ such that $\chi_F = \lambda \chi_G$. 
	Take an arbitrary $k \in \liealga$ such that $\chi(k) > 0$, and set $f= \ta(k)$.
	By the hypothesis in the lemma, we know that $h_{\mu}(f, \eta_0) = 0$.
	By Lemma \ref{lem: local entropy geq} and Lemma \ref{lem: local entropy leq}, we obtain
	\aryst
	\chi_F(k) \geq h_{\mu}(f, \eta_1) \geq 2 \chi_G(k).
	\earyst

\end{proof}

\subsection{Non-stationary normal form}

We recall a result in \cite{KK} on the existence of the non-stationary normal form. In our setting, their result states as follows.
\begin{lemma}\label{lem: nonstatnormform 1}
	For $\mu$-a.e. $x \in M^\a$, there exists a $C^{1+\epsilon}$ diffeomorphism $h_x : \cW^{\chi}_F(x) \to \R$ such that
	\enmt
	\item[$(i)$] $h_{\ta(k)x} \circ \ta(k) = D\ta(k) \circ h_{x}$ for every $k \in \liealga$,
	
	\item[$(ii)$] $h_x(x) = 0$ and $D_xh_x$ is an isometry,
	\item[$(iii)$] $h_x$ depends continuously on $x$ in the $C^{1+\epsilon}$ topology on a Pesin set.
	
	\eenmt
\end{lemma}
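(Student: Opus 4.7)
The plan is to carry out the Guysinsky--Katok non-stationary linearization for the one-dimensional Lyapunov subbundle $E = E^{\chi}_F$, whose existence in essentially our setting is the content of \cite{KK}. The real work is not the pointwise construction itself but verifying that it produces a normal form equivariant under the full abelian action $\ta$ of $\liealga$, not merely under a single element.

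First I would fix $k_{0} \in \liealga$ with $\chi(k_{0}) < 0$, so that $f := \ta(k_{0})$ contracts $\cW^{\chi}_F$. Choosing a measurable family of $C^{1+\epsilon}$ parametrizations $\phi_x : \cW^{\chi}_F(x) \to \R$ with $\phi_x(x) = 0$ and $D_x \phi_x$ an isometry (after identifying $E(x)$ with $\R$), I would define
\[
h_x(y) := \lim_{n \to \infty} \bigl( D \ta(n k_{0})|_{E(x)}\bigr)^{-1} \,\phi_{\ta(n k_{0}) x}\bigl( \ta(n k_{0}) y \bigr).
\]
Writing $f$ in these charts as $\tilde f_x(t) = \lambda(x) t + O(t^{1+\epsilon})$ with $\lambda(x) := D f|_{E(x)}$, and setting $t_n := \phi_{\ta(n k_0) x}(\ta(n k_0) y)$, the cocycle relation $D\ta((n+1)k_0) = \lambda(\ta(n k_0)x)\,D\ta(n k_0)$ forces the increment of the defining sequence to equal $(D\ta((n+1)k_0))^{-1} \cdot O(|t_n|^{1+\epsilon})$, which is of order $e^{-\chi(k_0)} e^{\epsilon n \chi(k_0)}$. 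Since $\chi(k_0) < 0$, this is summable, giving pointwise convergence; the uniform Pesin-chart bounds on the nonlinear part of $f$ then promote this to $C^{1+\epsilon}$ convergence on each Pesin set, which is (iii). Property (ii) is immediate: $h_x(x) = 0$ and every finite approximant has unit derivative at $x$.

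Property (i) for $k = k_{0}$ is built into the recursive definition. To extend (i) to arbitrary $k \in \liealga$, I would use abelianness of $\liealga$: the map $\tilde h_x := (D\ta(k)|_{E(x)})^{-1} \circ h_{\ta(k)x} \circ \ta(k)$ is another $C^{1+\epsilon}$ diffeomorphism satisfying (ii), and the commutation $\ta(k) \circ f = f \circ \ta(k)$ implies that $\tilde h_x$ linearizes $f$ as well. The standard uniqueness statement for one-dimensional non-stationary normal forms---two $C^{1+\epsilon}$ linearizations of $f$ with matching $1$-jet at the fixed point coincide, since their difference is a $C^{1+\epsilon}$ germ at $0$ with derivative $1$ commuting with a linear contraction---then forces $\tilde h_x = h_x$ for $\mu$-a.e.\ $x$, which is (i).

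The main obstacle I expect is not the pointwise construction but the combination of making it $C^{1+\epsilon}$-uniform on Pesin sets and extending equivariance from the single element $k_0$ to all of $\liealga$ simultaneously and $\mu$-a.e. The latter is handled by fixing a countable dense subset of $\liealga$, applying the uniqueness argument above $\mu$-a.e.\ for each element of that subset, and then closing the argument using the $C^{1+\epsilon}$-continuity asserted by (iii) together with continuity of $k \mapsto \ta(k)$.
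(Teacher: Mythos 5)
Your construction is exactly the one the paper relies on: the lemma is quoted from \cite{KK}, and the limit $\lim_n (D\ta(nk_0)|_{E(x)})^{-1}\phi_{\ta(nk_0)x}(\ta(nk_0)y)$ you define telescopes into precisely the explicit formula $h_x(y)=\int_x^y\prod_{i\ge 0}\frac{Jf(f^i(z))}{Jf(f^i(x))}\,dz$ that the paper writes down immediately after the statement, with the same summability estimate coming from $\dim E^\chi_F=1$ (no resonances) and the $C^{1+\epsilon}$ regularity. The extension of equivariance from $k_0$ to all of $\liealga$ via uniqueness of normal forms with matching $1$-jet is likewise the argument of \cite{KK}, so the proposal is correct and follows essentially the same route.
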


Let us denote by $\Omega$ the $\mu$-conull subset in Lemma \ref{lem: nonstatnormform 1} on which the non-stationary normal form is defined.
For any $x \in \Omega$, the map $h_x$ can be expressed in an explicit manner which we now describe.
We fix $x \in \Omega_0$ and an element $k_0 \in \liealga$ such that $\chi(k_0) < 0$, and denote $f = \ta(k_0)$. For any $z \in M^{\a}$, we denote
\aryst
Jf(z) = \norm{Df|_{E(z)}}.
\earyst
For any $y \in \cW^{\chi}_F(x)$, 
we have
\ary \label{eq: definition of nnf}
|h_x(y)| = \int_{x}^{y} \rho_x(z) dz
\eary
where 
\aryst
\rho_x(z) = \prod_{i=0}^{\infty} \frac{Jf(f^{i}(z))}{Jf(f^{i}(x))}.
\earyst
The integral in \eqref{eq: definition of nnf} is defined using the Riemannian metric on $\cW^{\chi}_F(x)$.
We define 
\aryst
\Omega_1 = \cup_{x \in \Omega} \cW^\chi_F(x).
\earyst
Then by \eqref{eq: definition of nnf}, we can define $h_y$ for any $y \in \Omega_1$. 
We have the following useful observations.

\begin{lemma}\label{lem: chart 1}
	For any  $x \in \Omega$, for any $y_1,y_2 \in \cW^\chi_F(x)$, the map $h_{y_1} h_{y_2}^{-1}$ is an affine transformation of $\R$. 
\end{lemma}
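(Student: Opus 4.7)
The plan is to exploit the explicit formula \eqref{eq: definition of nnf} for $h_y$ on the leaf $\cW^\chi_F(x)$. Since $\dim E = 1$, the density $\rho_y$ is scalar-valued, and the essential observation is that for any two basepoints $y_1, y_2 \in \cW^\chi_F(x)$ the ratio
\[
\frac{\rho_{y_1}(z)}{\rho_{y_2}(z)} \;=\; \prod_{i=0}^{\infty} \frac{Jf(f^{i}(z))/Jf(f^{i}(y_1))}{Jf(f^{i}(z))/Jf(f^{i}(y_2))} \;=\; \prod_{i=0}^{\infty} \frac{Jf(f^{i}(y_2))}{Jf(f^{i}(y_1))}
\]
does not depend on $z$. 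Call this constant $C(y_1,y_2)$. Once this is established, substituting into \eqref{eq: definition of nnf} (with the one-dimensional leaf oriented so that the integral is interpreted with sign, consistent with property (ii) that $D_x h_x$ is an isometry) yields
\[
h_{y_1}(z) \;=\; \int_{y_1}^{z} \rho_{y_1}(w)\, dw \;=\; C(y_1,y_2) \int_{y_1}^{z} \rho_{y_2}(w)\, dw \;=\; C(y_1,y_2)\bigl(h_{y_2}(z) - h_{y_2}(y_1)\bigr),
\]
so that $h_{y_1}\circ h_{y_2}^{-1}(u) = C(y_1,y_2)\,u - C(y_1,y_2)\,h_{y_2}(y_1)$, an affine map on $\R$.

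The steps I would carry out in order are: first, fix an orientation on $\cW^\chi_F(x)$ and reinterpret \eqref{eq: definition of nnf} as a signed integral, so that the identity $h_{y}(z_2) - h_{y}(z_1) = \int_{z_1}^{z_2} \rho_y(w)\,dw$ holds without absolute values. Second, derive the telescoping identity $\rho_{y_1}(z)/\rho_{y_2}(z) = \prod_{i\ge 0} Jf(f^i y_2)/Jf(f^i y_1)$ directly from the definition of $\rho_y$. Third, verify convergence of this infinite product and establish that the limit is a nonzero real number depending only on $y_1$ and $y_2$. Fourth, assemble the affine identity as displayed above.

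The main obstacle will be step three: verifying that the infinite product $\prod_{i\ge 0} Jf(f^i y_2)/Jf(f^i y_1)$ converges and is nonzero. This is where the hypotheses on the Pesin set enter. Since $f = \ta(k_0)$ with $\chi(k_0) < 0$, both $y_1, y_2 \in \cW^\chi_F(x)$ sit on the same $f$-stable leaf, so $d(f^i y_1, f^i y_2)$ decays exponentially at rate governed by $|\chi(k_0)|$ plus tempered errors on any Pesin block. Combined with the H\"older regularity of $Jf$ along $\cW^\chi_F$ (which itself follows from the $C^{1+\epsilon}$ assumption on $\a$ and the Pesin-regularity of $E$), this makes each factor $Jf(f^i y_2)/Jf(f^i y_1)$ exponentially close to $1$, giving absolute convergence of the sum of logarithms. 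The only remaining subtlety is that a priori $y_1$ and $y_2$ might not lie in the same Pesin set; one handles this by observing that the product in question is intrinsically defined on the entire leaf $\cW^\chi_F(x)$ (since $x \in \Omega$ already fixes a Pesin regular orbit along which the cocycle $Jf$ is tempered), and the backward contraction rates on $\cW^\chi_F(x)$ are uniform in $i$ once one base Pesin block is fixed, which is exactly the content of property (iii) of Lemma \ref{lem: nonstatnormform 1}.
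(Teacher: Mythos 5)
Your proof is correct and follows essentially the same route as the paper, which simply defers to \cite[Lemma 3.3]{KK}: the affine property comes from the termwise cancellation showing $\rho_{y_1}/\rho_{y_2}$ is the $z$-independent constant $\prod_{i\ge 0} Jf(f^i y_2)/Jf(f^i y_1)$, whose convergence follows from exponential contraction along $\cW^\chi_F(x)$ under $f=\ta(k_0)$ together with the H\"older regularity of $Jf$ coming from the $C^{1+\epsilon}$ hypothesis. You have merely written out the standard argument that the citation encapsulates, and your treatment of the orientation/sign issue in \eqref{eq: definition of nnf} and of the convergence of the infinite product is sound.
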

\begin{proof}
	This is proved in \cite[Lemma 3.3]{KK}.
\end{proof}

\begin{lemma}\label{lem: chart 2}
	For any $y \in \Omega$, for any $z \in \Omega_1$ such that there exists $g \in G^\chi$ satisfying $z = \ta(g) y$, we have
	\aryst
	\ta(g)\cW^\chi_F(y) = \cW^\chi_F(z).
	\earyst
	Moreover, there exists $c \in \{\pm\norm{D\ta(g)|_{E(y)}}\} $ such that
	\aryst
	h_z \ta(g) h_y^{-1}(t) = c t, \quad \forall t \in \R.
	\earyst
\end{lemma}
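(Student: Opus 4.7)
The plan is to split the lemma into two claims: (a) the set-theoretic equality $\ta(g)\cW^\chi_F(y) = \cW^\chi_F(z)$, and (b) the linearity of $h_z\circ\ta(g)\circ h_y^{-1}$ with the stated constant. I will prove (a) by a geometric argument using the projection $\pi$, and (b) by a direct calculation with the integral formula \eqref{eq: definition of nnf}.

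For (a), the key observation is that $\pi : M^\a \to G/\Gamma$ is $\ta(G)$-equivariant, and for $\mu$-a.e.\ $y$ the restriction $\pi|_{\cW^\chi(y)}$ is a smooth submersion onto the $G^\chi$-orbit of $\pi(y)$ whose fibres are precisely the leaves of $\cW^\chi_F$ (this is what it means for $\cW^\chi$ to be $C^1$-foliated by $\cW^\chi_F$ and $\cW^\chi_G$). Since $\ta(g)$ with $g\in G^\chi$ sends $\cW^\chi(y)$ to $\cW^\chi(\ta(g)y)$ (both are the unique coarse Lyapunov leaf through the respective point) and commutes with $\pi$, it must send $\pi$-fibres to $\pi$-fibres, i.e.\ $\cW^\chi_F$-leaves to $\cW^\chi_F$-leaves. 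The leaf $\cW^\chi_F(y)$ sits over $\pi(y)$, so its image sits over $g\pi(y)=\pi(z)$, forcing it to equal $\cW^\chi_F(z)$.

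For (b), I plan to compute $h_z(\ta(g)y_1)$ by substituting $w'=\ta(g)w$ into the integral \eqref{eq: definition of nnf}. The commutation relation $f^i\circ\ta(g) = \ta(g_i)\circ f^i$, where $g_i = \exp(ik_0)g\exp(-ik_0)$ is a curve in $G^\chi$ with $g_i\to e$ as $i\to\infty$ (because $\chi(k_0)<0$ and $\liealgg^\chi$ is one-dimensional), together with differentiation on $E$, gives the key telescoping identity
\aryst
	Jf(f^i(\ta(g)w)) \;=\; \frac{\|D\ta(g_{i+1})|_{E(f^{i+1}w)}\|}{\|D\ta(g_i)|_{E(f^i w)}\|}\, Jf(f^i w).
\earyst
Multiplying the corresponding ratios for $w$ and for $y$ and sending $i\to\infty$, the telescope collapses and, using $\|D\ta(g_N)\|\to 1$ as $g_N\to e$, yields $\rho_z(\ta(g)w)= \|D\ta(g)|_{E(y)}\| \,\|D\ta(g)|_{E(w)}\|^{-1}\rho_y(w)$. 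Substituting this, together with $dw' = \|D\ta(g)|_{E(w)}\|\,dw$, into the integral, the distortion factor in $w$ cancels and one obtains $h_z(\ta(g)y_1)=\pm\|D\ta(g)|_{E(y)}\|\cdot h_y(y_1)$, which is exactly the asserted linear formula with $c\in\{\pm\|D\ta(g)|_{E(y)}\|\}$ (the sign being determined by whether $\ta(g)$ preserves or reverses the orientation of $\cW^\chi_F$).

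The step I expect to be most delicate is controlling the tail of the telescoping product: although $g_N\to e$ exponentially in $G^\chi$, the points $f^Nw$ and $f^Ny$ drift and need not stay in any fixed Pesin set, so one cannot directly invoke uniform continuity of $D\ta$ on a Pesin set. The remedy is to use instead that $\ta : G\to \diff^1(M^\a)$ is continuous and $M^\a$ is compact, so $\|D\ta(g_N)\|_{C^0(M^\a)} \to 1$ uniformly over the base point as $g_N\to e$; this suffices to conclude that the boundary terms in the telescoping tend to $1$. A secondary subtlety, namely that $\rho_y$ and $\rho_z$ are genuinely convergent infinite products (requiring $f$ to contract $\cW^\chi_F$), is already guaranteed by the choice $\chi(k_0)<0$.
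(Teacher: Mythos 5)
Your proposal matches the paper's proof essentially step for step: part (a) is the paper's argument that $u=\ta(g)w$ lies both in $\cW^\chi(z)$ and in the fibre $\pi^{-1}(\pi(z))$, and part (b) is exactly the paper's telescoping computation with $g_i=\exp(ik_0)g\exp(-ik_0)\to e$ (the paper's $\xi_{i,w}=\norm{D\ta(\theta_\chi(\lambda^i v))|_{E(f^i(w))}}\to 1$), followed by the change of variables $du=\norm{D\ta(g)|_{E(w)}}\,dw$ in the integral \eqref{eq: definition of nnf}. The tail-control point you flag is handled in the paper by the same observation you make, namely that $\norm{D\ta(\theta_\chi(\lambda^i v))}\to 1$ uniformly over the base point.
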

\begin{proof}
	Take an arbitrary $w \in \cW^\chi_F(y)$, we denote $u = \ta(g)w$. Then for any $k \in \liealga$ such that $\chi(k) < 0$, we have
	\aryst
	\lim_{n \to \infty} n^{-1} \log d(\ta(nk)u , \ta(nk)z ) < 0.
	\earyst
	Moreover, we have
	\aryst
	\pi(u) = g \pi(w) = g \pi(y) = \pi(z).
	\earyst
	Thus we have $u \in \cW^\chi_F(z)$. This proves the first statement.
	
	We now prove the last statement.
	We use the natural parametrisation of $G^{\chi}$ by $\R^2$. Namely, we define a diffeomorphism
	$\theta_\chi: \R^2 \to G^{\chi}$ by
	\ary \label{item: thetachi}
	\theta_\chi(a,b) = {\rm Id} + (a+ib)E_{\chi}
	\eary
	where $E_{\chi}  = E_{s,t}$ if $\chi = \chi_{s,t}$.
	We write $g = \theta_\chi(v)$ for some $v \in \R^2$, and write $\lambda = e^{\chi_G(k_0)} < 1$. Notice that 
	\aryst
	f \ta(\theta_\chi(v)) = \ta(\theta_\chi(\lambda v)) f. 
	\earyst
	Thus we have
	\aryst
	Jf(u) &=&  \norm{D\ta(\theta_\chi(\lambda v))|_{E(f(w))}} Jf(w) \norm{D\ta(\theta_{\chi}(-v))|_{E(u)}}  \\
	&=&  Jf(w)   \norm{D\ta(\theta_\chi(\lambda v))|_{E(f(w))}}\norm{D\ta(\theta_{\chi}(v))|_{E(w)}}^{-1}. 
	\earyst
	More generally, for every integer $i \geq 0$, we have
	\aryst
	Jf(f^i(u)) =  Jf(f^i(w))   \norm{D\ta(\theta_\chi(\lambda^{i+1} v))|_{E(f^{i+1}(w))}}\norm{D\ta(\theta_{\chi}(\lambda^{i}v))|_{E(f^{i}(w))}}^{-1}. 
	\earyst
	Analogously, we have
	\aryst
	Jf(f^i(z)) =  Jf(f^i(y))   \norm{D\ta(\theta_\chi(\lambda^{i+1} v))|_{E(f^{i+1}(y))}}\norm{D\ta(\theta_{\chi}(\lambda^{i}v))|_{E(f^{i}(y))}}^{-1}. 
	\earyst
	To simplify notation, we set
	\aryst
	\xi_{i,w} &=& \norm{D\ta(\theta_{\chi}(\lambda^{i}v))|_{E(f^{i}(w))}}, \\
	\xi_{i,y} &=& \norm{D\ta(\theta_{\chi}(\lambda^{i}v))|_{E(f^{i}(y))}}.
	\earyst
	Notice that $\xi_{i,w}, \xi_{i,y}$ tend to $1$ exponentially fast as $i$ tends to infinity.
	Then for any $w_* \in \cW^\chi_F(y)$, denote $u_* = \ta(g) w_*$, we have
	\aryst
	|h_z(u_*)| &=& \int_{z}^{u_*} \rho_z(u) du \\
	&=& \int_{z}^{u_*} \prod_{i=0}^{\infty} \frac{Jf(f^{i}(u))}{Jf(f^{i}(z))} du \\
	&=& \int_{z}^{u_*} \prod_{i=0}^{\infty} [ \frac{Jf(f^{i}(w))}{Jf(f^{i}(y))} \frac{\xi_{i+1,w} \xi_{i,y}}{\xi_{i+1,y} \xi_{i,w}} ] du \\
	&=& \int_{z}^{u_*} \rho_y(w) \frac{\xi_{0,y}}{\xi_{0,w}} du \\
	\mbox{( as $u =\ta(g)w$ )}&=& \xi_{0,y} \int_{y}^{w_*} \rho_y(w) dw \\
	&=& \norm{D\ta(g)|_{E(y)}} |h_{y}(w_*)|.
	\earyst
	This confirms the last statement.
\end{proof}

\subsection{The proof for the atomic case}
We use the following parametrisation of $\cW^{\chi}$. For every $x \in \Omega_1$, we define the map $H_x$ from $\cW^{\chi}(x)$ to $\R^3$ by 
\aryst
H_x(p)= (a(p), b(p)) \mbox{ if we have }p = \ta(\theta_{\chi}(a(p)))h_{x}^{-1}(b(p))
\earyst
where $\theta_\chi$ is defined in \eqref{item: thetachi}.
It is straightforward to verify that $H_x$ is a homeomorphism.

We notice that for any $x \in \Omega_1$, for any $k \in \liealga$, there exists $c \in \{ \pm \norm{D\ta(k)|_{E(x)}}\}$ such that 
\ary \label{item: graphtransfer} 
H_{\ta(k)x} \ta(k) H_x^{-1}(u,v) = (e^{\chi_G(k)}u, cv), \quad \forall u \in \R^2, v \in \R.
\eary

Let us define a subgroup of the affine transformations of $\R^3$ as follows,
\aryst
\bbA = \{(v_1,v_2,v_3) \mapsto (v_1+a_1,v_2+a_2, b v_3+c) \mid a_1,a_2, c \in \R, b \in \R_* \}.
\earyst
For each $T \in \bbA$, we will use $a_1(T), a_2(T), b(T), c(T)$ to denote the coefficients in the expression of $T$. We also set 
\aryst
a(T) := ( a_1(T), a_2(T)).
\earyst

We collect some useful properties of $H_x$.
\begin{lemma}\label{chart prop 1}
	For $\mu$-a.e. $x$, for $\mu^{\cW^\chi}_x$-a.e. $y$, the map $H_y H_x^{-1}$ belongs to $\bbA$. Moreover, we have
	\aryst
	\pi(y) = \ta(\theta_\chi(-a(H_yH_x^{-1})) ) \pi(x).
	\earyst
\end{lemma}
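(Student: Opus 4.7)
The plan is to exploit the local product structure $\cW^\chi = \cW^\chi_G \cdot \cW^\chi_F$ together with the two properties of the normal-form charts recorded in Lemmas \ref{lem: chart 1} and \ref{lem: chart 2}, plus the fact that $\theta_\chi:(\R^2,+)\to G^\chi$ is a group isomorphism onto an abelian group. For $\mu$-a.e.\ $x \in \Omega_1$ and $\mu^{\cW^\chi}_x$-a.e.\ $y$, one uses the product structure (with $\cW^\chi_G(\cdot) = \ta(G^\chi)(\cdot)$ locally) to write $y = \ta(\theta_\chi(a_0))x_0$ for some $a_0 \in \R^2$ and $x_0 \in \cW^\chi_F(x)$; genericity of $y$ plus Fubini ensure $x_0 \in \Omega_1$, so that $h_{x_0}$ is defined and Lemma \ref{lem: chart 2} applies with $g=\theta_\chi(a_0)$, yielding
$$ \cW^\chi_F(y) = \ta(\theta_\chi(a_0))\cW^\chi_F(x_0) = \ta(\theta_\chi(a_0))\cW^\chi_F(x). $$

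Next, for any $p \in \cW^\chi(x) = \cW^\chi(y)$, writing $H_x(p) = (a'', b'')$ means $p = \ta(\theta_\chi(a''))w'$ with $w' := h_x^{-1}(b'') \in \cW^\chi_F(x)$. Commutativity of $G^\chi$ then gives $p = \ta(\theta_\chi(a''-a_0))w_1$ with $w_1 := \ta(\theta_\chi(a_0))w' \in \cW^\chi_F(y)$ by the previous step, so $H_y(p) = (a''-a_0,\, h_y(w_1))$. I would next read off the third coordinate by combining Lemma \ref{lem: chart 2} applied to the triple $(x_0, y, \theta_\chi(a_0))$, which yields a constant $c$ with $h_y\ta(\theta_\chi(a_0))h_{x_0}^{-1}(t) = ct$, with Lemma \ref{lem: chart 1}, which gives $\alpha \in \R_*, \beta \in \R$ such that $h_{x_0}h_x^{-1}(t) = \alpha t + \beta$. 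Chaining these, $h_y(w_1) = c\,h_{x_0}(w') = c\alpha\,b'' + c\beta$, and therefore
$$ H_yH_x^{-1}(a'', b'') = (a'' - a_0,\, c\alpha\, b'' + c\beta) \in \bbA, $$
with translation part $a(H_yH_x^{-1}) = -a_0$.

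For the second assertion, I would use that $\cW^\chi_F$ is a fiberwise foliation with respect to $\pi: M^\a \to G/\Gamma$, so $\pi(x_0) = \pi(x)$, and $G$-equivariance of $\pi$ then gives $\pi(y) = \ta(\theta_\chi(a_0))\pi(x) = \ta(\theta_\chi(-a(H_yH_x^{-1})))\pi(x)$, as desired. I do not anticipate a substantive obstacle: the calculation is essentially algebraic once the abelianness of $G^\chi$ is used. The only real care is in step one, namely ensuring that the decomposition $y = \ta(\theta_\chi(a_0))x_0$ can be chosen with both $y$ and $x_0$ in the $\mu$-conull set $\Omega_1$ where $h_\bullet$ and Lemmas \ref{lem: chart 1}--\ref{lem: chart 2} are available; this is handled by an elementary Fubini argument on the $\cW^\chi_F$/$\cW^\chi_G$ factors inside $\cW^\chi(x)$.
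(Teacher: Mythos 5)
Your proof is correct and follows essentially the same route as the paper: both arguments reduce to Lemmas \ref{lem: chart 1} and \ref{lem: chart 2} together with the additivity $\theta_\chi(a)\theta_\chi(b)=\theta_\chi(a+b)$, the only (cosmetic) difference being that the paper factors through the intermediate point $z=\ta(\theta_\chi(v))x\in\cW^\chi_F(y)$ rather than through $x_0\in\cW^\chi_F(x)$. If you keep your ordering, note that Lemma \ref{lem: chart 2} as stated requires its base point in $\Omega$ (you only know $x_0\in\Omega_1$), which is avoided either by the paper's ordering or by applying the lemma to the pair $(y,\theta_\chi(-a_0))$ instead.
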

\begin{proof}
	As $\Omega$ is $\mu$-conull, for $\mu$-a.e. $x$, $\mu^{\cW^\chi}_x$-a.e. $y$ belongs to $\Omega$. We fix $x, y \in \Omega$ as above. Since $\pi(y) \in G^\chi(\pi(x))$, we see that there exists $v \in \R^2$ such that $z := \ta(\theta_\chi(v))(x) \in \pi^{-1}(\pi(y)) \cap \cW^\chi(x)$. Then it is clear that 
	$z  \in \cW^\chi_F(y)$.
	By Lemma \ref{lem: chart 1}, we can see that $H_{y}H_{z}^{-1} \in \bbA$. Moreover, it is clear that $a(H_y H_z^{-1}) = (0,0)$.
	
	By Lemma \ref{lem: chart 2}, we have
	\ary \label{item: h_zh_x}
	h_{z}^{-1}(c t) = \ta(\theta_\chi(v)) h_x^{-1}(t), \quad \forall t \in \R
	\eary
	where $c \in \{\pm \norm{D\ta(\theta_\chi(v))|_{E(x)}}\}$.
	As $H_x, H_z$ are homeomorphisms between $\cW^{\chi}(x)$ and $\R^3$, for any $s \in \R^2$, $t \in \R$, there exists a unique pair  $s' \in \R^2$, $t' \in \R$ such that $H_x^{-1}(s,t) = H_z^{-1}(s',t')$. Then by the definitions of $H_x, H_z$ and by \eqref{item: h_zh_x}, we have
	\aryst
	\ta(\theta_\chi(s))h_x^{-1}(t) &=& \ta(\theta_\chi(s')) h_z^{-1}(t') \\
	&=&   \ta(\theta_\chi(s'))  \ta(\theta_\chi(v)) h_x^{-1}(c^{-1}t') \\
	&=&   \ta(\theta_\chi(s' + v))  h_x^{-1}(c^{-1}t').
	\earyst
	Consequently, we have
	\aryst
	s' = s - v, \quad t' = c t.
	\earyst
	Thus $H_z H_x^{-1} \in \bbA$ and $a(H_zH_x^{-1}) = -v$. Hence $H_y H_x^{-1} \in \bbA$ and $a(H_yH_x^{-1})=-v$. This concludes the proof.
\end{proof}

We denote
\ary \label{lab: identify bbA0}
\bbA^{0} &=& {\rm Ker}(p)  \\
&=&  \{(v_1,v_2,v_3) \mapsto (v_1,v_2,b v_3+c \mid b,c\in \R\}.\nonumber
\eary

We denote by  $\cal{PM}(\R^3)$ the space of equivalence classes under proportionality of Radon measure on $\R^3$. 
We define 
\aryst
\cal H := L^0(\R^2, Leb).
\earyst
That is, the set of Borel measurable $\R$-valued functions on $\R^2$ modulo the equivalence 
\aryst
f_1 \sim f_2 \mbox{ iff $f_1(v)= f_2(v)$ for Lebesgue almost every $v$.} 
\earyst
It is well-known that $\cal H$, equipped with the topology given by convergence in measure, is a complete metric space.

\begin{proof}[Proof of Proposition \ref{main tech prop} --- the atomic case]
	We assume by contradiction that $\mu^{G^{\chi}}_x$ is atomic for $\mu$-a.e. $x$. 
	Consequently,
	$\mu^{\cW^\chi_G}_x$ is atomic for $\mu$-a.e. $x$.
	
	We let $\{ [\mu^{\cW^\chi}_x] \}_{x \in M^\a}$ be defined in subsection \ref{subsec: Conditional measure} where $\mu^{\cW^\chi}_x$ is a Radon measure on $\cW^\chi(x)$ determined up to a scalar.
	For $\mu$-a.e. $x$, we define
	\ary \label{item: psidef}
	\Psi(x) = [(H_x)_*(\mu^{\cW^{\chi}}_x)] \in \cal{PM}(\R^3).
	\eary
	We have the following.
	\begin{lemma} \label{lem. dila}
		There exists a unique $r \in \cal H$ such that the following is true.
		Fix an arbitrary constant $u > 0$ and an arbitrary Radon measure $\omega \in \Psi(x)$. For every $c > 0$, we define
		\aryst
		\omega_c := \omega|_{B_{\R^2}(0,u) \times (-c,c)} \in \cal M(\R^3).
		\earyst
		Let $\pi_{1,2} : \R^3 \to \R^2$ denote the projection onto the first two coordinates of $\R^3$. 
		Then the measure  
		\aryst
		\bar \omega_c : = (\pi_{1,2})_*\omega_c \in \cal M(\R^2)
		\earyst
		is absolutely continuous with respect to the Lebesgue measure on $\R^2$;
		and we have
		\aryst
		\omega_c = \int_{r^{-1}(-c,c)}  \delta_{(v,r(v))} d\bar \omega_c(v).
		\earyst
	\end{lemma}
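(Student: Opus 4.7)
The plan is to transport $\omega$ to $\R^3$ via $H_x$ and combine two inputs: the Dirac property $\mu^{\cW^\chi_F}_p = \delta_p$ coming from the lemma at the start of this section, and the fact that $\pi_*\mu$ is Haar on $G/\Gamma$. From the definition of $H_x$ and Lemma \ref{lem: chart 2}, the chart sends each $\cW^\chi_F$-leaf in $\cW^\chi(x)$ to a vertical line $\{v\} \times \R$, each $G^\chi$-orbit in $\cW^\chi(x)$ to a horizontal plane $\R^2 \times \{t\}$, and intertwines the projection $\pi : \cW^\chi(x) \to G^\chi \pi(x)$ with the coordinate projection $\pi_{1,2} : \R^3 \to \R^2$ through the parameterization $\theta_\chi$.

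Applying Rokhlin's disintegration theorem to the partition of $\R^3$ into vertical lines, I would write $\omega = \int_{\R^2} \omega_v \, d\bar\omega(v)$, where $\bar\omega := \pi_{1,2,*}\omega$ and each $\omega_v$ is a probability measure on $\{v\} \times \R$. The compatibility of nested conditional measures along the subfoliation $\cW^\chi_F \subset \cW^\chi$ identifies the $H_x$-pullback of $\omega_v$ with $\mu^{\cW^\chi_F}$ (up to scalar) for $\bar\omega$-a.e.\ $v$, which is a Dirac mass by the opening lemma of this section; hence $\omega_v = \delta_{(v, r(v))}$ for a measurable function $r$ defined $\bar\omega$-a.e. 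To show $\bar\omega$ is absolutely continuous with respect to Lebesgue, I would invoke that the disintegration of $\mathrm{Haar}_{G/\Gamma}$ along $G^\chi$-orbits is Haar on each orbit, which under $\theta_\chi^{-1}$ becomes Lebesgue on $\R^2$; matching the $\cW^\chi$-disintegration of $\mu$ with the $G^\chi$-orbit disintegration of $\pi_*\mu$ via $\pi$ forces $\bar\omega$ to be locally proportional to Lebesgue, and in particular upgrades $r$ to a well-defined element of $\cal H = L^0(\R^2, \mathrm{Leb})$.

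The formula for $\omega_c$ then follows by intersecting the graph representation $\omega = \int \delta_{(v, r(v))} \, d\bar\omega(v)$ with the box $B_{\R^2}(0,u) \times (-c,c)$: the projection is $\bar\omega_c = \bar\omega|_{B_{\R^2}(0,u) \cap r^{-1}(-c,c)}$, which is absolutely continuous, and $\omega_c = \int_{r^{-1}(-c,c)} \delta_{(v, r(v))} \, d\bar\omega_c(v)$. Uniqueness of $r$ in $\cal H$ follows from the uniqueness of the disintegration together with the equivalence of $\bar\omega$ and Lebesgue on the relevant supports, and is independent of $u$, $c$, and the representative $\omega \in \Psi(x)$ because rescaling $\omega$ does not change the family $\{\omega_v\}$ up to normalization. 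The main obstacle will be the careful identification of the abstract $\omega_v$ coming from Rokhlin's theorem with the geometric conditional $\mu^{\cW^\chi_F}$, and the parallel identification of $\bar\omega$ with a multiple of Lebesgue through the disintegration of $\pi_*\mu$: both are standard in spirit---the $G^\chi$-orbit and $\cW^\chi_F$-foliations trivialize $H_x$ into a product-like structure---but require care because the conditional measures are defined only up to positive scalar and because $r$ is a priori defined only $\bar\omega$-a.e.\ rather than Lebesgue-a.e.
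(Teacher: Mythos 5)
Your proposal is correct and follows essentially the same route as the paper: Rokhlin disintegration over $\pi_{1,2}$, the Dirac property of $\mu^{\cW^\chi_F}$ forcing the fiber conditionals to be point masses, and the Haar property of $\pi_*\mu$ giving absolute continuity of the projected measure. The only difference is organizational --- the paper first truncates to the boxes $B_{\R^2}(0,u)\times(-d,d)$ so that each $\omega_d$ is finite before disintegrating, and then assembles $r$ as the pointwise limit of the compatible functions $r_d$, which sidesteps the issue of disintegrating the possibly infinite Radon measure $\omega$ all at once.
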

	\begin{proof}
		We assume for simplicity that $u = 1$, and we will define $r$ over $B_{\R^2}(0,1)$. The general case is similar.
		
		Given $d> 0$. We deduce that $\bar\omega_d$ is absolutely continuous with respect to the Lebesgue measure by the fact that $\pi_*\mu$ is the Haar measure on $G / \Gamma$. To simply notation, we set
		\aryst
		R_d  = \{  v \mid \frac{d\bar\omega_d}{dLeb}(v) > 0 \}.
		\earyst
		We have $R_d \subset R_{d'}$ for any $d < d'$, and $\cup_{d > 0} R_d$ coincides with $B_{\R^2}(0,1)$ up to a Lebesgue null set. 
		
		By Rokhlin's disintegration theorem, we obtain
		\aryst
		\omega_d = \int_{\R^2}  \omega_d^{\{v\} \times \R }d\bar\omega_d(v),
		\earyst 
		where $\omega_d^{\{v\} \times \R }$ denotes the conditional measure of $\omega_d$ on $\{v\} \times \R$.
		As we know that $\mu^{\cW^\chi_F}_y$ is the Dirac measure at $y$ for $\mu$-a.e. $y$; and that for $\omega$-a.e. $(v,s) \in \R^3$,
		\aryst
		\omega_d^{\{v\} \times \R} \leq (H_x)_*( \mu^{\cW^\chi_F}_{H_x^{-1}(v, s)}),
		\earyst
		we can conclude that $\omega_d^{\{v\} \times \R}$ is a Dirac measure on $\{v\} \times \R$ for $\bar\omega_d$-a.e. $v \in R_d$.
		Thus there exists an essentially unique $\bar\omega_d$-a.e. defined measurable function $r_d:  R_d \to (-d,d)$ such that
		\aryst
		\omega_d = \int_{R_d}  \delta_{(v, r_d(v))} d\bar\omega_d(v).
		\earyst
		We extend $r_d$ to a measurable function from $B_{\R^2}(0,1) $ to $(-d,d)$ by setting 
		\aryst
		r_d |_{B_{\R^2}(0,1)  \setminus R_d} \equiv 0.
		\earyst
		
		By definition, for every $c \in (0,d)$, we have
		\aryst
		\omega_c = \int_{r_d^{-1}(-c,c)} \delta_{(v,r_d(v))} d\bar\omega_d(v).
		\earyst
		Consequently, we have
		\aryst
		\bar \omega_c = \bar \omega_d |_{r_d^{-1}(-c,c)}
		\earyst
		and
		\aryst
		r_c = r_d |_{r_d^{-1}(-c,c)}.
		\earyst
		We let $r : B_{\R^2}(0,1) \to \R$ be the pointwise limit of $r_d$ as $d$ tends to infinity.
		It is straightforward to verify that $r$ satisfies the requirement of the lemma.
	\end{proof}
	
	For a $\mu$-typical $x$, we let $r$ be given by Lemma \ref{lem. dila}, and set 
	\aryst
	S(x) = r.
	\earyst
	
	\begin{cor} \label{cor. graphtransformofs}
		For $\mu$-a.e. $x$, for $\mu^{\cW^\chi}_x$-a.e. $y$, we have
		\aryst
		Graph(S(y)) = (H_y H_x^{-1}) Graph(S(x)). 
		\earyst
	\end{cor}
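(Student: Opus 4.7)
My strategy is to transfer the measure identity $\Psi(y) = (H_y H_x^{-1})_* \Psi(x)$ through the uniqueness clause in Lemma \ref{lem. dila} to obtain the graph identity. First, I would observe that for $y \in \cW^\chi(x)$ the leafwise equivalence classes agree, $[\mu^{\cW^\chi}_x] = [\mu^{\cW^\chi}_y]$. This is built into the construction in Section \ref{subsec: Conditional measure}: both classes live on the common leaf $\cW^\chi(x) = \cW^\chi(y)$, and any two $f$-increasing measurable partitions subordinate to $\cW^\chi$ give conditional measures that agree up to a scalar on their common atoms, which in the limit pins down a Radon measure on the full leaf up to a scalar. Setting $T := H_y H_x^{-1}$ and invoking Lemma \ref{chart prop 1}, I get $T \in \bbA$ and the identity $\Psi(y) = [T_*(H_x)_* \mu^{\cW^\chi}_x] = T_* \Psi(x)$ in $\cal{PM}(\R^3)$.

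Second, I would use the explicit form $T(v_1, v_2, v_3) = (v_1 + a_1, v_2 + a_2, b\, v_3 + c)$ with $a = (a_1, a_2) = a(T)$, $b = b(T)$, $c = c(T)$, and note that $T$ preserves the vertical foliation $\{v\} \times \R$ up to a translation $a$ of the base. Fixing a representative $\omega \in \Psi(x)$ described by Lemma \ref{lem. dila} (so that $\omega$ is concentrated on $\mathrm{Graph}(S(x))$ in the sense of windows), the pushforward $\tilde \omega := T_* \omega \in \Psi(y)$ is concentrated on the set $T(\mathrm{Graph}(S(x))) = \{(w, b\, S(x)(w - a) + c) : w \in \R^2\}$, i.e.~on the graph of the function $\tilde r(w) := b\, S(x)(w - a) + c$. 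Applying Lemma \ref{lem. dila} to $\Psi(y)$ and using the uniqueness of the a.e.-defined function associated to an element of $\Psi(y)$, I conclude that $S(y) = \tilde r$ in $\cal H$, which is exactly the claim $\mathrm{Graph}(S(y)) = T\, \mathrm{Graph}(S(x))$ in $\R^3$.

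The only technical point I expect to dwell on is the window matching in Lemma \ref{lem. dila}: the characterization of $S(y)$ is phrased through restrictions to boxes $B_{\R^2}(0, u) \times (-c', c')$, so I need to check that the restriction of $T_*\omega$ to such a box corresponds, under $T^{-1}$, to the restriction of $\omega$ to a translated-rescaled box, and that the resulting projected measures $\overline{(T_*\omega)_{c'}}$ remain absolutely continuous with respect to Lebesgue. Since $T$ is an affine homeomorphism with the simple block structure above, this is routine; one runs the verification for each window pair with $u$ and $c'$ large enough to exhaust $\R^2$, and then lets them tend to infinity to recover the full graph identity. Beyond this, the argument is essentially formal, relying on the affine nature of $\bbA$ and the $\mu$-a.e. compatibility supplied by the conditional measure construction.
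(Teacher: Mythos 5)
Your proposal is correct and follows essentially the same route as the paper: the proportionality of $\mu^{\cW^\chi}_x$ and $\mu^{\cW^\chi}_y$ gives $\Psi(y)=(H_yH_x^{-1})_*\Psi(x)$, and then the graph characterization and uniqueness of $S(\cdot)$ from Lemma \ref{lem. dila} force $\mathrm{Graph}(S(y))=(H_yH_x^{-1})\,\mathrm{Graph}(S(x))$. The paper compresses the last step into ``the corollary then follows suit,'' whereas you spell out the affine pushforward and the window-matching; that extra detail is consistent with the intended argument.
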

	\begin{proof}
		For $x,y$ in the corollary, there exists a constant $c > 0$ such that  
		\aryst
		\mu^{\cW^\chi}_x = c \mu^{\cW^\chi}_y.
		\earyst
		Then by \eqref{item: psidef}, we have
		\aryst
		\Psi(y)=(H_y H_x^{-1})_*\Psi(x).
		\earyst
		By Lemma \ref{lem. dila}, we see that $\Psi(x)$, resp. $\Psi(y)$, is supported on the graph of $S(x)$, resp. $S(y)$.
		The corollary then follows suit.
	\end{proof}	
	
	We define for every $c > 0$ that
	\ary \label{lab. pc}
	P_c(x) := S(x)^{-1}(-c,c) \cap B_{\R^2}(0,1).
	\eary
	\begin{rema}
	    By definition, it is clear that 
		\ary
		\lim_{c \to +\infty} Leb(P_c(x)) = Leb(B_{\R^2}(0,1)) = \pi.
		\eary
	\end{rema}
	
	\begin{lemma} \label{lem. dilation}
		We have
		\aryst
		\lim_{c \to 0} Leb(P_c(x)) = 0.
		\earyst
	\end{lemma}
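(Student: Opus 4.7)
The plan is to argue by contradiction, using the standing atomicity hypothesis on $\mu^{G^\chi}_x$ active in the atomic case of the proof of Proposition \ref{main tech prop}. Suppose there existed a set of positive $\mu$-measure on which $\lim_{c \to 0^+} Leb(P_c(x)) > 0$. Since $P_c(x)$ is monotone decreasing in $c$, we would then have
\[
Leb(P_0(x)) > 0, \qquad \text{where} \qquad P_0(x) := \bigcap_{c > 0} P_c(x) = S(x)^{-1}(0) \cap B_{\R^2}(0,1).
\]

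Next I would extract a slightly sharper statement from the proof of Lemma \ref{lem. dila}. The nested relations $R_d \subset R_{d'}$ for $d < d'$, together with the fact that $\cup_{d > 0} R_d$ covers $B_{\R^2}(0,1)$ up to a Lebesgue null set, show that the density of $\bar\omega$ with respect to the Lebesgue measure is positive almost everywhere on $B_{\R^2}(0,1)$. Combined with $Leb(P_0(x)) > 0$, this would force $\omega \in \Psi(x)$ to have a nontrivial restriction to the horizontal slice $P_0(x) \times \{0\} \subset \R^3$, and this restriction would be absolutely continuous with respect to two-dimensional Lebesgue measure on $P_0(x)$.

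I would then transfer this picture to $M^\a$ via $H_x^{-1}$: property $(ii)$ of the non-stationary normal form yields $h_x^{-1}(0) = x$, hence $H_x^{-1}(v, 0) = \ta(\theta_\chi(v)) x$ for every $v$. The slice $P_0(x) \times \{0\}$ is therefore identified with a subset of the $\cW^\chi_G$-leaf $G^\chi \cdot x = \cW^\chi_G(x)$ of positive Haar measure, and the restriction of $\mu^{\cW^\chi}_x$ to this single leaf is a nonzero measure absolutely continuous with respect to Haar measure on $G^\chi$.

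The last step is to conclude that $\mu^{G^\chi}_x \equiv \mu^{\cW^\chi_G}_x$ has a non-atomic -- in fact absolutely continuous -- component, contradicting the atomicity hypothesis. The main (and, in my view, only) obstacle is a clean disintegration argument showing that a nontrivial absolutely continuous piece of $\mu^{\cW^\chi}_x$ concentrated on the single leaf $\cW^\chi_G(x)$ actually descends to a corresponding piece of $\mu^{\cW^\chi_G}_x$. This should follow from the $C^1$-foliation of $\cW^\chi$ by the transverse pair $\cW^\chi_F, \cW^\chi_G$, together with the standing assumption that $\mu^{\cW^\chi_F}_y$ is a Dirac mass at $y$ for $\mu$-a.e.\ $y$: the latter forces $\Psi(x)$ to be supported on the graph of $S(x)$, so the transverse measure in the disintegration of $\mu^{\cW^\chi}_x$ over $\cW^\chi_G$-leaves must carry an atom at the particular leaf $\cW^\chi_G(x)$ of mass $\bar\omega(P_0(x)) > 0$.
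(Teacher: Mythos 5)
Your proposal is correct and follows essentially the same route as the paper: identify the limit with $Leb(S(x)^{-1}(0)\cap B_{\R^2}(0,1))$ by monotonicity, observe that positivity would force the slice measure of $\omega$ on $\R^2\times\{0\}$ to be absolutely continuous, and contradict the atomicity of $\mu^{\cW^\chi_G}_x$ via the domination $\omega_d^{\R^2\times\{0\}}\leq (H_x)_*\mu^{\cW^\chi_G}_x$. The ``disintegration obstacle'' you flag is exactly the step the paper handles with this last inequality, and your sketch of it is the intended argument.
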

	\begin{proof}
		It is clear that we have
		\aryst
		\lim_{c \to 0} Leb(P_c(x)) = Leb(S(x)^{-1}(0) \cap B_{\R^2}(0,1)).
		\earyst
		If $Leb(S(x)^{-1}(0) \cap B_{\R^2}(0,1)) > 0$, then we see that for any $d > 0$ and any $\omega \in \Psi(x)$, the conditional measure of $\omega_d$ on $\R^2 \times \{0\}$ is not atomic. On the other hand, by definition, we see that for  a $\mu$-typical $x$, we have
		\aryst
		\omega_d^{\R^2 \times \{0\}} \leq [ (H_x)_*(\mu^{\cW^\chi}_{x}) ]^{\R^2 \times \{0\}}= (H_x)_*\mu^{\cW^{\chi}_G}_x.
		\earyst 
		By our hypothesis, $\mu^{\cW^{\chi}_G}_x$ is atomic.
		This is a contradiction.
	\end{proof}

	We define $\lambda : M^{\a} \to \R$ as follows,
	\aryst
	\lambda(x) = \inf \{ c > 0 \mid
	Leb(P_c(x)) \geq \frac{1}{2} \}.
	\earyst
	
	By Lemma \ref{lem. dilation}, we see that $\lambda(x) \in (0, \infty)$.
	
	For any real constant $c \neq 0$, we define map $D_c: \R^3 \to \R^3$ as 
	\aryst
	D_c(a,b) = (a, c^{-1}b), \quad \forall a \in \R^2, \forall b \in \R.
	\earyst
	We define
	\aryst
	\Phi(x) &=& (D_{\lambda(x)})_*\Psi(x), \\
	\hat S(x) &=& \lambda(x)^{-1}S(x).
	\earyst
	
	By definition, for any $t \in L_\chi$, we have $\chi_G(t) = 0$. By \eqref{item: invarianceconditionalmeasure}, \eqref{item: psidef}, \eqref{item: graphtransfer},
	for any $t \in L_{\chi}$, we have 
	\ary \label{item: conditionalmeasure transfer2}
	\Psi(\ta(t)x) =  (D_d)_*\Psi(x)
	\eary for certain constant $d \neq 0$. Then by definition, we have
	\aryst
	S(\ta(t)x) = d^{-1}S(x).
	\earyst 
	
	Take an arbitrary $\lambda' > \lambda(x)$. 
	Notice that by the definition of $P_c$ and \eqref{item: conditionalmeasure transfer2}, we have
	\aryst
	P_c(\ta(t)x) = P_{cd}(x), \quad \forall c > 0.
	\earyst
	Then by \eqref{item: conditionalmeasure transfer2} we have
	\aryst
	P_{d^{-1} \lambda ' } (\ta(t)x)  = P_{\lambda '}(x)  \geq \frac{1}{2}.
	\earyst
	Consequently, we have 
	\aryst
	d^{-1} \lambda(x) \geq \lambda(\ta(t)x).
	\earyst
	By symmetry, we can also show that  $d^{-1} \lambda(x) \leq \lambda(\ta(t)x)$. Thus $\lambda(\ta(t)x) = d^{-1} \lambda(x)$.
	By definition, we see that
	\ary \label{eq. phiishchiinv}
	\Phi(x) = \Phi(\ta(t)x) \mbox{ and }\hat S(x) = \hat S(\ta(t)x) .
	\eary
	As $t$ is  an arbitrary element of $L_\chi$, we see that $\hat S$ is an $H_{\chi}$-invariant function (modulo $\mu$).
	We set 
	\aryst
	\cA = \hat S^{-1} \cB_{\cal H}.
	\earyst
	
	For any closed subgroup $H \subset G$, we denote by 
	$\cE_{H}$ the $\sigma$-algebra generated by  $H$-invariant sets modulo $\mu$. More precisely, we define
	\aryst
	\cE_{H} := \{B \in \cB_{M^{\a}} \mid g^{-1}B = B \mbox{ mod } \mu, \quad \forall g \in H \}.
	\earyst
		It is well-known that (for example, see \cite[Theorem 6.1]{bookEW}), if $\mu$ is $H$-invariant, then for $\mu$-a.e. $x$, the atom $\cE_{H}(x)$ is the $H$-ergodic component of $\mu$ at $x$.
	By \eqref{eq. phiishchiinv}, we have that 
	\aryst
	\cA \subset \cE_{H_{\chi}}.
	\earyst

    By $\chi \in   \Sigma^{out}_3 \cap (-\Sigma^{non}_Q)$, we see that
    \aryst
    [\cW^{-\chi}]  = [\cW^{-\chi}_G] \supset \cE_{H_{\chi}}.
    \earyst
	By the similar argument as in Section \ref{sec. nonatomic}, we deduce that
	\aryst
	[\cW^\chi] \supset \cE_{H_{\chi}}.
	\earyst

	 Consequently, for $\mu$-a.e. $x$, for $\mu^{\cW^\chi}_x$-a.e. $y$, we have $y \in \cA(x)$, or in another words,
	 \aryst
	  \hat S(x) = \hat S(y).
	 \earyst 
	 The consideration of $\hat S$ is related to the method presented in \cite{EL-asymmetry}.
	 
	The above discussion shows that for a  $\mu$-typical point $x$, the set
	\aryst
	U := \{y \in \cW^{\chi}(x) \mid \hat S(x) = \hat S(y) \}
	\earyst
	satisfies that $\pi_{1,2}U$ is  non-discrete. 
	By Corollary \ref{cor. graphtransformofs}, for any $y \in U$ we have
	\aryst
	(D_{\lambda(x)^{-1}\lambda(y)} H_y H_x^{-1}) Graph(S(x)) = Graph(S(x)).
	\earyst

	We set
	\aryst
	\bbA_x = \{T \in \bbA \mid T Graph(S(x)) = Graph(S(x)) \}.
	\earyst
	We notice that $\bbA_x$ has a natural factor, denoted by $p : \bbA_x \to \overline{\bbA}_x$, where
	\aryst
	\overline{\bbA}_x = \{ \check{T} : \R^2 \to \R^2  \mid \exists T \in \bbA_x \mbox{ such that }  \check{T} \pi_{1,2} = \pi_{1,2} T \}
	\earyst
	and as before $\pi_{1,2}$ denotes the projection from $\R^3$ onto its first two coordinates. 
	We can naturally identify $\overline{\bbA}_x$ with a subset of $\R^2$ by taking the translation vector.
	
	We set  $\bbA^{0}_x = \bbA^0 \cap \bbA_x$. By definition, $\bbA_x$, $\bbA^0$, $\bbA^0_x$ are closed subgroups of $\bbA$, and there  is an exact sequence
	\aryst
	0 \xrightarrow{}   \bbA^0_x \xrightarrow{} \bbA_x \xrightarrow{} \overline{\bbA}_x \xrightarrow{} 0.
	\earyst

	We notice  the following.
	\begin{lemma}\label{lem. kerneltrivial}
		We have 
		\enmt
		\item $\bbA^0_x = \{{\rm Id}\}$;
		\item the map $\bbA_x \to \overline{\bbA}_x$ is proper. Consequently, $\overline{\bbA}_x$ is closed.
		\eenmt
	\end{lemma}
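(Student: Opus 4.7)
The plan is to prove (1) first and then use its conclusion (non-constancy of $S(x)$) as the essential input for (2). The intuition for (1) is that an element of $\bbA^0_x$ must preserve each vertical fiber of $Graph(S(x))$, forcing $S(x)$ to be constant; but this would make $\mu^{\cW^\chi}_x$ concentrate on a single $\cW^\chi_G$-leaf, incompatible with atomicity of $\mu^{\cW^\chi_G}$.

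For (1), take $T \in \bbA^0_x$ with $T(v_1,v_2,v_3) = (v_1,v_2,bv_3+c)$. Since $T$ fixes the first two coordinates and $Graph(S(x))$ has a unique point over each $v \in \R^2$, the condition $T \in \bbA_x$ reduces to $b\, S(x)(v) + c = S(x)(v)$ for $\bar\omega$-a.e.\ $v$. If $T \neq \Id$, this forces $S(x) \equiv s_0$ for some constant $s_0$, so $\mu^{\cW^\chi}_x$ is supported on the single $\cW^\chi_G$-leaf $\cW^\chi_G(y_0)$, where $y_0 := h_x^{-1}(s_0) \in \cW^\chi_F(x)$. Working in the chart $H_{y_0}$ (which sends $y_0$ to the origin and $\cW^\chi_G(y_0)$ to $\R^2 \times \{0\}$), Lemma \ref{lem. dila} applied at $y_0$ together with the standard identification of the conditional on $\R^2 \times \{0\}$ with $(H_{y_0})_*\mu^{\cW^\chi_G}_{y_0}$ (as used in the proof of Lemma \ref{lem. dilation}) shows that $\mu^{\cW^\chi_G}_{y_0}$ has absolutely continuous $\R^2$-projection, hence is non-atomic, contradicting our hypothesis.

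For (2), let $T_n = (a^n, b^n, c^n) \in \bbA_x$ with $a^n \to a^* \in \R^2$; the plan is to bound $(b^n, c^n)$ by a Lusin-type argument. On the ball $B(0, U + \sup_n|a^n|)$, Lusin's theorem gives a Borel set $F$ of $\bar\omega$-measure within $\epsilon$ of the full mass on which $S(x)$ is continuous and hence bounded by some $M$. By (1), $S(x)$ is not $\bar\omega$-a.e.\ constant, so for $\epsilon$ small the essential range of $S(x)$ on $F$ has a gap $2\delta > 0$. The set $F_n := F \cap (F - a^n) \cap B(0, U)$ has $\bar\omega$-measure uniformly bounded below in $n$ and still meets both neighborhoods of the extreme essential values, giving $v_0^n, v_1^n \in F_n$ with $|S(x)(v_0^n) - S(x)(v_1^n)| \geq \delta$ and $v_i^n + a^n \in F$. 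Substituting into $S(x)(v + a^n) = b^n S(x)(v) + c^n$ and subtracting yields $|b^n| \delta \leq 2M$, whence $|c^n|$ is also bounded. Extract a convergent subsequence $T_{n_k} \to T^*$ in $\bbA$. The subgroup $\bbA_x$ is closed in $\bbA$: the map $T \mapsto T \cdot S(x)$, $(T \cdot f)(u) := b f(u-a) + c$, is continuous from $\bbA$ to $\cal H = L^0(\R^2, Leb)$ on bounded sets (by continuity of translation in measure), and $\bbA_x$ is the preimage of the class of $S(x)$. Hence $T^* \in \bbA_x$, which shows $p$ is proper; consequently $\overline{\bbA}_x = p(\bbA_x)$ is closed.

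The main technical obstacle is the quantitative Lusin step: one must preserve the two-value gap $\delta$ uniformly as the set $F$ is translated by $a^n$. This is precisely where the non-constancy established in (1) is used, and it has to be extracted quantitatively rather than merely qualitatively.
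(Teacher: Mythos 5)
Your proof is correct and follows essentially the same route as the paper's: part (1) via the functional equation $b\,S(x)(v)+c=S(x)(v)$ forcing $S(x)$ to be constant and hence $\mu^{\cW^\chi_G}_x$ to be non-atomic, and part (2) by using non-constancy to produce two separated level sets of positive measure that survive small translations, yielding the bound $|b|\leq (\mbox{range spread})/(\mbox{gap})$ and then a bound on $c$. The only differences are presentational: the paper takes the level sets to be preimages of two disjoint intervals and treats only sequences with $a(T_n)\to 0$ (leaving the bootstrap to general sequences and the closedness of $\bbA_x$ in $\bbA$ implicit), whereas you package the same estimate via a Lusin set and handle arbitrary convergent $a^n$ and the closedness of $\bbA_x$ explicitly.
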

	\begin{proof}
		We denote $r = S(x)$.
		Take an arbitrary $T \in \bbA_x$. By the uniqueness of $r$ in Lemma \ref{lem. dila}, we can see that
		\aryst
		b(T) r(v-a(T)) + c(T) = r(v)
		\earyst
		for Lebesgue almost every $v \in \R^2$.
		
		If $\bbA^0_x \neq \{{\rm Id}\}$ and ${\rm Id} \neq T \in \bbA^0_x$, then $r$ must equal to a constant Lebesgue almost everywhere. This contradicts the our hypothesis that $\mu^{G^\chi}$ is atomic almost everywhere.  Item (1) follows suit.
		
		As we have seen $r$ is not almost everywhere constant, there exist disjoint intervals $I_1, I_2 \subset \R$ such that $r^{-1}(I_i)$ has positive Lebesgue measure for $i=1,2$.
		
		Let $\{ T_n \}_{n \geq 0}$ be a sequence in $\bbA_x$ such that 
		\aryst
		\lim_{n \to \infty}a(T_n) = 0.
		\earyst
		Then for all sufficiently large $n$,  for $i=1,2$, we may find $v_{n,i} \in r^{-1}(I_i)$ such that $v_{n,i} -a(T_n) \in r^{-1}(I_i)$. 
		Thus
		\aryst
		b(T_n)(r(v_{n,1} - a(T_n)) - r(v_{n,2}-a(T_n))) = r(v_{n,1}) - r(v_{n,2}).
		\earyst
		This implies that for all sufficiently large $n$ we have
		\aryst
		|b(T_n)| \leq dist(I_1, I_2)^{-1} diam(I_1 \cup I_2).
		\earyst
		In a similar way, we may bound $c(T_n)$ for all sufficiently large $n$.
		This implies the properness of the map from $\bbA_x$ to $\overline{\bbA}_x$.
	\end{proof}

	By Lemma \ref{lem. kerneltrivial}(1),  we may define $b(z) := b(T)$ and $c(z) := c(T)$ for every $z \in \overline{\bbA}_x$ where $T$ is the unique  element of $\bbA_x$ with $a(T) = z$.

	By Lemma \ref{lem. kerneltrivial}(2), we conclude that $\overline{\bbA}_x$ is a closed, non-discrete subgroup of the translations on $\R^2$. Thus $\overline{\bbA}_x$ is a linear subspace of $\R^2$ of positive dimension.

	It is direct to verify that $b(z_1+z_2) = b(z_2) b(z_1)$ for any $z_1,z_2 \in \overline{\bbA}_x$. 
	Then there exists a linear functional $\ell^x: \overline{\bbA}_x \to \R$ such that $b(z) = e^{\ell^x(z)}$ for any $z \in \overline{\bbA}_x$.

    Assume that for $\mu$-a.e. $x$, we have $\ell^x \neq 0$. We take a $\mu$-typical $x$, and abbreviate $\ell^x$ as $\ell$.
	Take two arbitrary elements $T_1,T_2 \in \bbA_x$, and some $v \in \overline{\bbA}_x$, $u \in \R$. To simply notation, we set $z_i = a(T_i)$ for $i=1,2$. Then we have
	\aryst
	T_2 T_1 (v,u) &=& T_2(v+z_1, e^{\ell(z_1)}u + c(z_1)) \\
	&=& (v + z_1 + z_2, e^{\ell(z_2)}(e^{\ell(z_1)}u  + c(z_1) )+  c(z_2) ) \\
	&=& (v + z_1 + z_2, e^{\ell(z_2+z_1)}u + ( e^{\ell(z_2)} c(z_1) + c(z_2) ) ).
	\earyst
	We can see that for any $z_1,z_2 \in \overline{\bbA}_x$,
	\ary
	c(z_1+z_2) &=& e^{\ell(z_2)} c(z_1) + c(z_2). \label{lab: phiequation}
	\eary
	
	Then by \eqref{lab: phiequation}, we obtain
	\ary \label{lab: phicaseII}
	c(z) = c_0(e^{\ell(z)}- 1), \quad \forall z \in \overline{\bbA}_x
	\eary
	for some constant $c_0 \in \R$.

	By  \eqref{lab: phicaseII}, we see that for $\mu$-a.e. $x$, there exists a linear functional $ \ell^x : \overline{\bbA}_x \to \R$,
	and a constant $c^x_0 \in \R$ such that
	\aryst
	c_x(z) =  c^x_0(e^{\ell^x(z)}- 1), \quad \forall z \in \overline{\bbA}_x.
	\earyst
	
	For a $\mu$-typical $x$, for any $k \in \liealga$, and for any $z \in \overline{\bbA}_x$, we set
	\aryst
	C^x_{k,\pm}(v,u) = (e^{\chi_G(k)}v,  \pm \norm{D\ta(k)|_{E(x)}}u)
	\earyst
	and
	\aryst
	T^x_z(v,u) = (v+z, e^{\ell^x(z)}u + c^x_0(e^{\ell^x(z)}-1)   ) .
	\earyst
	
	By \eqref{item: graphtransfer} and straightforward computations, we deduce that  for any $\sigma \in \{-, +\}$,
	\aryst
		C^x_{k, \sigma} T^x_z (C^x_{k, \sigma})^{-1}(v,u) &=& C^x_{k, \sigma} T^x_z(e^{-\chi_G(k)}v,  \sigma \norm{D\ta(k)|_{E(x)}}^{-1}u) \\
		&=& C^x_{k, \sigma} (e^{-\chi_G(k)}v + z,  e^{\ell^x(z)}\sigma \norm{D\ta(k)|_{E(x)}}^{-1} u + c^x_0(e^{\ell^x(z)}-1 ) ) \\
	    &=& (v +e^{\chi_G(k)}z ,  e^{\ell^x(z)}  u + \sigma \norm{D\ta(k)|_{E(x)}}c^x_0(e^{\ell^x(z)}-1 ) ).
	\earyst
	It is direct to see that
	\aryst
		C^x_{k, \sigma} T^x_z (C^x_{k, \sigma})^{-1}
	\in \bbA_{\ta(k)x}.
	\earyst
	Then for some $\sigma \in \{-,+\}$ we have for any $z \in \overline{\bbA}_x$ that
	\ary \label{eq. ctct}
	C^x_{k, \sigma} T^x_z C^x_{k, \sigma} = T^{\ta(k)x}_{e^{\chi_G(k)}z}.
	\eary
	Then we have
	\aryst
	\ell^x(z) = \ell^{\ta(k)x}(e^{\chi_G(k)}z).
	\earyst
	By this is impossible by Poincar\'e's recurrence lemma and our hypothesis that $\ell^x \neq 0$ for $\mu$-a.e. $x$.
	Consequently, for $\mu$-a.e. $x$, we have $\ell^x \equiv 0$.
	Then it is easy to see that $c$ is a linear functional on $\overline{\bbA}_x$, which we denote by $c_x$.
	Again by \eqref{eq. ctct}, we deduce that for some $\sigma \in \{\pm 1\}$,
	\aryst
	c_{\ta(k)x} = \sigma \norm{D\ta(k)|_{E(x)}} e^{-\chi_G(k)} c_x.
	\earyst
	Consequently, we have
	\ary \label{eq. c1c2}
		\norm{c_{\ta(k)x}} = \norm{D\ta(k)|_{E(x)}} e^{-\chi_G(k)} \norm{c_x}.
    \eary
    
    Assume that $c_x \neq 0$ for $\mu$-a.e. $x$.  
    Notice that
    \aryst
    \lim_{n \to \infty} n^{-1} \log \norm{D\ta(nk)|_{E(x)}} = \chi_F(k), \quad \forall k \in \liealga.
    \earyst
    By Corollary \ref{lem: fiberislargerthangrouporbit}, we have $\chi_F = \lambda \chi_G$ for some $\lambda > 1$.
    We get a contradiction by \eqref{eq. c1c2} and Poincar\'e's recurrence theorem.
    
    Thus we have proved that $c_x \equiv 0$ for $\mu$-a.e. $x$, and as a result,
    \aryst
    \bbA_x = \{ ((v,u) \mapsto (v + z, u)) \mid z \in \overline{\bbA}_x \}.
    \earyst

	However, for any Radon measure $\omega$ on $\R^3$ satisfying that
	\aryst
	T_* \omega = \omega, \forall T \in \bbA_x,
	\earyst
	we know that for $\omega$-a.e. $(v,u) \in \R^3$ where $v \in \R^2$ and $u \in \R$,
	the conditional measure  of $\omega$ on $\R^2 \times \{u\}$ is nonatomic. While this contradicts our hypothesis that  $\mu^{\cW^\chi_G}$ is atomic.
\end{proof}

\noindent{\bf{Acknowledgements.}}
I am indebted to Jinpeng An for useful inputs from Lie theory.
I thank Federico Rodriguez-Hertz for remark on a technical point in \cite{KKRH}. I thank Jinxin Xue and Lei Yang for many useful and stimulating discussions on dynamics. This work is initiated during my stay in KTH Royal Institute of Technology. I thank their hospitality.

\end{document}